\theoremstyle{plain}
\newtheorem{theorem}{Theorem}[section]
\newtheorem{proposition}[theorem]{Proposition}
\newtheorem{lemma}[theorem]{Lemma}
\newtheorem{corollary}[theorem]{Corollary}
\newtheorem*{thm}{Theorem}
\theoremstyle{remark}
\newtheorem{definition}[theorem]{Definition}
\newtheorem*{remark}{Remark}
\newtheorem*{example}{Example}
\newtheorem*{acknowledgement}{Acknowledgements}
\numberwithin{equation}{section}
\providecommand{\bysame}{\leavevmode ---\ }
\providecommand{\og}{``}
\providecommand{\fg}{''}
\providecommand{\smfandname}{\&}
\providecommand{\smfedname}{\'ed.}
\newcommand{\mbb}[1]{\mathbb{#1}}
\newcommand{\lie}[1]{{\mathfrak{#1}}}
\newcommand{\abs}[1]{\lvert #1\rvert}
\newcommand{\norm}[1]{\lVert #1\rVert}
\newcommand{\vf}[1]{\frac{\partial}{\partial #1}}
\DeclareMathOperator{\im}{Im}
\DeclareMathOperator{\id}{id}
\DeclareMathOperator{\ad}{ad}
\DeclareMathOperator{\Ad}{Ad}
\DeclareMathOperator{\Aut}{Aut}
\DeclareMathOperator{\Lie}{Lie}
\title{Quotients of bounded homogeneous domains by cyclic groups}
\author{Christian Miebach}\thanks{The author is supported by SFB/TR 12 of the
Deutsche Forschungsgemeinschaft.}
\address{Fakult\"at f\"ur Mathematik, Ruhr-Universit\"at Bochum,
Universit\"atsstra{\ss}e 150, D - 44780 Bochum, Telefon: +49 - 234 - 32 - 23329,
Fax: +49 - 234 - 32 - 14498}
\email{christian.miebach@ruhr-uni-bochum.de}
\subjclass[2000]{32M10, 32E10}
\begin{document}

\maketitle

\begin{abstract}
Let $D$ be a bounded homogeneous domain in $\mbb{C}^n$ and let $\varphi$ be an
automorphism of $D$ which generates a discrete subgroup $\Gamma$ of
$\Aut_\mathcal{O}(D)$. It is shown that the complex space $D/\Gamma$ is Stein.
\end{abstract}

\section{Introduction}

Let $D\subset\mbb{C}^n$ be a bounded domain of holomorphy and let $\varphi$ be
an automorphism of $D$ such that the cyclic group $\Gamma:=\langle\varphi
\rangle:=\bigl\{\varphi^k;\ k\in\mbb{Z}\bigr\}$ is a discrete subgroup of the
automorphism group $\Aut_\mathcal{O}(D)$. It follows that $\Gamma$ acts
properly on $D$ and hence that the quotient $X:=D/\Gamma$ is a complex space.
In this situation one would like to know conditions on $D$ or $\varphi$ which
guarantee that $X$ is a Stein space.

Since the group $\Gamma$ is cyclic, it is either finite or isomorphic to
$\mbb{Z}$. In the first case it is a classical result that Steinness of $D$
implies Steinness of $X$. Therefore we assume that $\Gamma$ is infinite cyclic.
In the case that $D$ is biholomorphically equivalent to the unit ball
$\mbb{B}_n$ it is proven in~\cite{Fab} and~\cite{FabIan} that $X=D/\langle
\varphi\rangle$ is Stein for hyperbolic and parabolic automorphisms $\varphi$.
We will generalize this result to arbitrary bounded homogeneous domains.

\begin{thm}
Let $D\subset\mbb{C}^n$ be a bounded homogeneous domain. Let $\varphi$ be an
automorphism of $D$ such that the group $\Gamma=\langle\varphi\rangle$ is
a discrete subgroup of $\Aut_\mathcal{O}(D)$. Then the quotient $X=D/\Gamma$ is
a Stein space.
\end{thm}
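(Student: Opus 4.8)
The plan is to reduce the statement to the existence of a $\Gamma$-invariant strictly plurisubharmonic exhaustion of $D$. First I would record that $\Gamma\cong\mbb{Z}$ acts freely and properly discontinuously: since $D$ is bounded it is Kobayashi hyperbolic, so $\Aut_\mathcal{O}(D)$ acts properly and every isotropy group is compact; if some $\varphi^k$ with $k\neq0$ had a fixed point it would lie in a compact isotropy group, forcing $\langle\varphi^k\rangle$ to be finite and contradicting that $\Gamma$ is infinite. Hence $X=D/\Gamma$ is a complex manifold, and by the Narasimhan--Grauert characterization it is Stein as soon as it carries a strictly plurisubharmonic exhaustion, equivalently as soon as $D$ carries a $\Gamma$-invariant strictly plurisubharmonic function that is proper modulo $\Gamma$. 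Two structural facts will drive the construction: $D$ is contractible (it is biholomorphic to a Siegel domain of the second kind, hence a cell), so every nowhere-vanishing holomorphic function on $D$ has a global holomorphic logarithm; and the Bergman potential $u(z)=\log K_D(z,\bar z)$ is a real-analytic strictly plurisubharmonic function whose form $i\partial\bar\partial u$ is $\Aut_\mathcal{O}(D)$-invariant, while $u$ itself transforms by $u\circ\varphi=u-2\operatorname{Re}\ell_\varphi$, where $\ell_\varphi=\log\det(D\varphi)$ is a well-defined holomorphic function on $D$.

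Next I would normalize $\varphi$ using the homogeneous structure. Realizing $D$ as a Siegel domain on which the split solvable group $S$ of the normal $j$-algebra acts simply transitively, the automorphism $\varphi$ can be analyzed through its effect on the underlying convex cone and on the decomposition $S=AN$ into split-torus and unipotent parts. I expect a dichotomy governed by the \emph{drift} $a=\lim_{n\to\infty}\tfrac1n\operatorname{Re}\ell_{\varphi^n}$ of the log-Jacobian cocycle $\ell_{\varphi^n}=\sum_{j=0}^{n-1}\ell_\varphi\circ\varphi^{j}$, which measures whether $\varphi$ genuinely dilates the cone (a nontrivial $A$-component, the \emph{hyperbolic} case $a\neq0$) or acts by a translation/unipotent motion only (the \emph{parabolic} case $a=0$). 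This reproduces, for an arbitrary bounded homogeneous domain, the hyperbolic/parabolic trichotomy available for the ball in~\cite{Fab} and~\cite{FabIan}; the discreteness of $\Gamma$ is exactly what excludes an elliptic (interior-fixed-point) component.

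In the hyperbolic case I would solve the cohomological equation $G\circ\varphi-G=\ell_\varphi$ for a holomorphic $G$ on $D$: the dilation underlying $\varphi$ multiplies the characteristic (determinant) function of the cone by a positive character, so a branch of its logarithm furnishes $G$, and then $\tilde u:=u+2\operatorname{Re}G$ is $\Gamma$-invariant and strictly plurisubharmonic, with $i\partial\bar\partial\tilde u=i\partial\bar\partial u>0$; one checks it is proper modulo $\Gamma$ (in the one-dimensional model $\tilde u=-2\log\sin(\arg w)$ blows up at both boundary ends of the resulting annulus). In the parabolic case the drift vanishes, $u$ is essentially already $\Gamma$-invariant and controls all directions transverse to the single boundary fixed point, but it fails to be proper at the puncture end; there I would add a $\Gamma$-invariant pluriharmonic ``horospherical height'' $h$, supplied by a suitable character of the abelian/unipotent part of $S$ (the analogue of $\operatorname{Im}w$ in the translation model $w\mapsto w+1$), and take $\rho=u+\chi(h)$ for a convex increasing $\chi$, so that strict plurisubharmonicity is preserved and $\rho$ becomes proper on $X$.

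The main obstacle I anticipate is precisely this structural normalization together with the solvability and growth control that it requires. Producing the holomorphic transfer function $G$ in the hyperbolic case amounts to showing that the class of $\ell_\varphi$ vanishes in $H^1(X,\mathcal{O}_X)\cong\mathcal{O}(D)/(\varphi^\ast-1)\mathcal{O}(D)$, which rests on identifying the cone-characteristic function and on estimating the Birkhoff sums $\ell_{\varphi^n}$; and in the parabolic case one must construct the correct horospherical height adapted to the unique boundary fixed point and verify global properness of $\rho$ on the quotient rather than merely on $D$. Carrying out these two constructions uniformly for \emph{all} bounded homogeneous domains, and not only the symmetric ones, through the $j$-algebra and Siegel-domain description of $\Aut_\mathcal{O}(D)$, is where I expect the real work to lie.
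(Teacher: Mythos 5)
Your route is genuinely different from the paper's: you want to build a $\Gamma$--invariant strictly plurisubharmonic function on $D$, proper modulo $\Gamma$, out of the Bergman potential $u=\log K_D$ plus corrections, whereas the paper never constructs potentials at all --- it removes the elliptic part of $\varphi$ via a Jordan--Chevalley decomposition in Kaneyuki's algebraic hull (Proposition~\ref{Prop:Reduction}), places the resulting cyclic group in the split solvable group $S$, fibers $D$ $S$--equivariantly over a lower-dimensional homogeneous domain with fibers $\mbb{B}_m$, and concludes by induction using Docquier--Grauert, Grauert's Oka principle and Matsushima--Morimoto. Your general framework is sound (Steinness of $X$ is indeed equivalent to the existence of such an invariant potential, and your transformation law $u\circ\varphi=u-2\,\mathrm{Re}\,\ell_\varphi$ is correct), but the execution has two genuine gaps.

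First, the claim that discreteness of $\Gamma$ ``is exactly what excludes an elliptic component'' is false. On the bidisc take $\varphi(z,w)=(e^{i\theta}z,h(w))$ with $\theta$ irrational and $h$ hyperbolic: $\Gamma$ is infinite cyclic and discrete, yet $\varphi_{\sf e}\neq e$. Such mixed elements are precisely why the paper needs its reduction modulo the compact torus $T=\overline{\langle\varphi_{\sf e}\rangle}$. Note also that in this example your cohomological equation has no holomorphic solution (there is no holomorphic $G$ on the disc with $G(e^{i\theta}z)-G(z)=i\theta$; evaluate at the fixed point $z=0$), so at best one can solve for real parts and must handle the elliptic directions by a separate argument --- which the drift dichotomy does not supply. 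The dichotomy is also too coarse in another way: $(\lambda z,\lambda^{-1}w)$ on the bidisc is hyperbolic but has zero drift.

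Second, and decisively, the properness ``check'' is the entire difficulty, and it fails for the potentials you actually propose. Hyperbolic case: on the bidisc with $\varphi=(h,h)$, $h(z)=\lambda z$, $\lambda>1$, your recipe (log of the characteristic function of the quadrant) gives $\tilde u=-2\log\sin(\arg z)-2\log\sin(\arg w)$, which vanishes identically along the sequence $(i,\,e^k i)$, $k\to\infty$; but this sequence leaves every compact subset of $X$ (bringing the second coordinate back to a fixed compact set forces $n\approx k/\log\lambda$ applications of $\varphi^{-1}$, which sends the first coordinate to the boundary), so $\tilde u$ is not an exhaustion of $X$. Parabolic case: on the rank-two Siegel upper half-space $\{Z\in\mathrm{Sym}_2(\mbb{C});\ \im Z>0\}$ with the unipotent $\varphi(Z)=A^tZA$, $A=\left(\begin{smallmatrix}1&1\\0&1\end{smallmatrix}\right)$, the potential $u=-3\log\det\im Z$ is already invariant, and the invariant pluriharmonic height is $h=\im z_{11}$; but $\rho=u+\chi(h)$ tends to $-\infty$ along $Z_M=i\,\mathrm{diag}(\epsilon,M)$, $M\to\infty$, while $\varphi^{n}(Z_M)$ has $(2,2)$--entry $i(n^2\epsilon+M)$, so the sequence is divergent in $X$. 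In both cases the theorem is of course still true, and one can repair the potential by adding further invariant plurisubharmonic terms --- e.g.\ $\mathrm{Re}\bigl((\log z-\log w)^2\bigr)$ in the first example --- but producing such corrections uniformly, for every bounded homogeneous domain and every Jordan type of $\varphi$, is not a verification: it is the substance of the theorem, and it is exactly what your proposal leaves open and what the paper's fibration-and-induction argument is designed to circumvent.
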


The main steps of the proof are as follows. Since the group
$\Aut_\mathcal{O}(D)$ has only finitely many connected components, we may assume
that $\varphi$ is contained in $G=\Aut_\mathcal{O}(D)^0$. By Kaneyuki's theorem
the group $G$ is isomorphic to the identity component of a real-algebraic group.
Hence, every element $\varphi\in G$ may be written as
$\varphi=\varphi_{\sf e}\varphi_{\sf h}\varphi_{\sf u}$ where $\varphi_{\sf e}$
is elliptic, $\varphi_{\sf h}$ is hyperbolic, $\varphi_{\sf u}$ is unipotent and
where these elements commute. It can be shown that the group
$\Gamma':=\langle\varphi_{\sf h}\varphi_{\sf u}\rangle$ is again discrete in
$G$. Since the groups $\Gamma$ and $\Gamma'$ differ by the compact torus
generated by $\varphi_{\sf e}$, the quotient $X'=D/\Gamma'$ is Stein if and only
if $X$ is Stein. Consequently we may work with the group $\Gamma'$ which has the
advantage of being contained in a maximal split solvable subgroup $S$ of $G$
which acts simply transitively on $D$. Exploiting the structure theory of
$S$ we obtain the existence of an $S$--equivariant holomorphic submersion
$\pi\colon D\to D'$ onto a bounded homogeneous domain $D'$ whose fibers are
biholomorphically equivalent to the unit ball $\mbb{B}_m$. If $\Gamma'$ acts
properly on $D'$ we are in position to use an inductive argument to prove
Steinness of $X$ while if $\Gamma'$ stabilizes every $\pi$--fiber we use the
fact that the quotients $\mbb{B}_m/\Gamma'$ are already known to be Stein.

This paper is organized as follows. In the first section we provide the
necessary background on bounded homogeneous domains and their automorphism
groups. In the second section we establish the existence of a Jordan-Chevalley
decomposition in $G$ and reduce the problem to discrete subgroups of $S$. In
Section~3 we study in detail the unit ball $\mbb{B}_n$ and obtain a new proof
of the fact that $\mbb{B}_n/\Gamma$ is Stein. Afterwards we prove the existence
of the $S$--equivariant submersion $\pi\colon D\to D'$ which allows us to prove
the main result in the last section.

\begin{acknowledgement}
I would like to thank Prof.~Dr.~K.~Oeljeklaus for many helpful and encouraging
discussions on the topics presented here as well as for several invitations to
the Universit{\'e} de Provence (Aix-Marseille~I) where this paper has been
written.
\end{acknowledgement}

\section{Background on bounded homogeneous domains}

We review several facts from the theory of bounded homogeneous domains.
For further details we refer the reader to~\cite{Pya} and~\cite{Kan} and the
references therein.

\subsection{The automorphism group of a bounded homogeneous domain}

Let $D\subset\mbb{C}^n$ be a bounded domain. A theorem of H.~Cartan
(\cite{Car}) states that the group $\Aut_{\mathcal{O}}(D)$ of holomorphic
automorphisms of $D$ is a real Lie group with respect to the compact open
topology such that its natural action on $D$ is differentiable and proper. We
write $G$ for the connected component of $\Aut_{\mathcal{O}}(D)$ which contains
the identity. We identify the Lie algebra $\lie{g}=\Lie(G)$ with the Lie algebra
of complete holomorphic vector fields on $D$.

\begin{definition}
The bounded domain $D$ is called homogeneous if $\Aut_{\mathcal{O}}(D)$ acts
transitively on it.
\end{definition}

\begin{remark}
\begin{enumerate}
\item Let $D$ be a bounded homogeneous domain and let $z_0\in D$ be a base
point. Since $D\cong\Aut_\mathcal{O}(D)/\Aut_\mathcal{O}(D)_{z_0}$ is connected,
the (compact) isotropy group $\Aut_\mathcal{O}(D)_{z_0}$ meets every connected
component of $\Aut_\mathcal{O}(D)$. This shows that $\Aut_\mathcal{O}(D)$ has
at most finitely many connected components.
\item If $D$ is homogeneous, then $G=\Aut_\mathcal{O}(D)^0$ acts transitively
on $D$, too.
\end{enumerate}
\end{remark}

From now on we assume that the bounded domain $D\subset\mbb{C}^n$ is
homogeneous. It follows from~\cite{Bo3} that the group $G$ is semi-simple (and
then in particular real-algebraic) if and only if $D$ is symmetric. For
arbitrary homogeneous domains the group $G$ is semi-algebraic by Kaneyuki's
theorem.

\begin{theorem}[\cite{Kan2}]\label{Thm:Kaneyuki}
There exists a faithful representation $\rho$ of $G$ such that $\rho(G)\subset
{\rm{GL}}(N,\mbb{R})$ is the identity component of a real-algebraic group. In
particular, $\lie{g}$ is isomorphic to an algebraic Lie algebra.
\end{theorem}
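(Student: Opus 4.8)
The plan is to pass to an explicit realization of $D$, linearize the action, and then verify algebraicity by controlling the semisimple parts of the elements of $G$. First I would replace $D$ by a Siegel domain of the second kind: by the realization theorem of Vinberg--Gindikin--Piatetski-Shapiro every bounded homogeneous domain is biholomorphic to a Siegel domain $\mathcal{D}=\mathcal{D}(\Omega,F)$ attached to a homogeneous convex cone $\Omega$ and an $\Omega$-Hermitian form $F$, and such a domain is in fact affinely homogeneous. A biholomorphism carries $\Aut_{\mathcal{O}}(D)$ isomorphically onto $\Aut_{\mathcal{O}}(\mathcal{D})$, so it suffices to treat $\mathcal{D}$. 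The group $\mathrm{Aff}(\mathcal{D})^0$ of affine automorphisms is clearly the identity component of a real-algebraic group, being cut out by polynomial conditions (preservation of $\Omega$ and compatibility with $F$), and it already acts transitively.

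The core difficulty is that for non-symmetric $\mathcal{D}$ the full group $\Aut_{\mathcal{O}}(\mathcal{D})$ is strictly larger than $\mathrm{Aff}(\mathcal{D})$, so I must understand the genuinely nonlinear automorphisms. Here I would use the fact (Kaup--Matsushima--Ochiai, Murakami) that every complete holomorphic vector field on $\mathcal{D}$ is polynomial of degree at most two in the coordinates, so that $\lie{g}$ is a finite-dimensional graded Lie algebra $\lie{g}=\lie{g}_{-1}\oplus\lie{g}_{-1/2}\oplus\lie{g}_{0}\oplus\lie{g}_{1/2}\oplus\lie{g}_{1}$, the grading being given by $\ad H$ for a distinguished $H\in\lie{g}_0$ whose eigenvalues lie in $\tfrac12\mbb{Z}$. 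The non-positive part $\lie{g}_{\le 0}$ is the affine subalgebra, while $\lie{g}_{1/2}\oplus\lie{g}_1$ accounts for the nonlinear flows. Since such a vector field is recovered from its $2$-jet at a base point $z_0$, and since by H.~Cartan's uniqueness theorem an automorphism fixing the $1$-jet at $z_0$ is the identity, one obtains a faithful finite-dimensional representation $\rho\colon G\to\mathrm{GL}(N,\mbb{R})$ (for instance the adjoint representation on $\lie{g}$).

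It then remains to show that $\rho(G)$ is Zariski-closed up to its identity component, for which I would apply Chevalley's criterion to $\lie{g}':=\rho_*\lie{g}\subset\lie{gl}(N,\mbb{R})$: a real Lie subalgebra is algebraic as soon as it contains the semisimple and nilpotent Jordan parts of each of its elements together with the required replicas of the semisimple ones. Writing $\lie{g}=\lie{s}\ltimes\lie{r}$ for a Levi decomposition, the semisimple factor $\lie{s}$ is automatically algebraic and the nilradical, being nilpotent, exponentiates to a unipotent algebraic group; the only real issue is the maximal torus inside the radical $\lie{r}$. This is where the geometry enters decisively: the split solvable group $S$ acting simply transitively on $\mathcal{D}$ carries the structure of a normal $j$-algebra, whose root decomposition has structure constants and eigenvalues defined over $\mbb{Q}$, so that the semisimple elements of $\lie{r}$ have rational eigenvalues and the torus they span is Zariski-closed rather than an irrational winding.

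The step I expect to be the main obstacle is exactly this rationality: a solvable linear Lie group need not be algebraic in general (an irrational line in a two-torus is the standard counterexample), and excluding such behaviour seems to require the explicit normal $j$-algebra structure --- in particular the grading element $H$ with half-integral $\ad$-eigenvalues --- rather than any soft argument. Granting it, Chevalley's criterion yields that $\lie{g}'$ is algebraic, whence $\rho(G)$ is the identity component of its Zariski closure and $\lie{g}\cong\lie{g}'$ is an algebraic Lie algebra, as claimed. In the symmetric case the whole argument can be bypassed, since by~\cite{Bo3} $G$ is then semisimple and semisimple Lie algebras are always algebraic.
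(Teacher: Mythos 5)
A preliminary remark: the paper does not prove this statement at all --- Theorem~\ref{Thm:Kaneyuki} is quoted from Kaneyuki's article \cite{Kan2} and used as a black box --- so your attempt can only be measured against the argument in the literature. Your overall architecture (Siegel-domain realization, polynomial vector fields of degree at most two and the induced grading, a faithful representation of adjoint type, algebraicity via the fine structure of the solvable part) is indeed the classical strategy; but two of your steps have genuine gaps.

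First, faithfulness of $\rho=\Ad$. For connected $G$ the kernel of the adjoint representation is the center $Z(G)$, and Cartan's uniqueness theorem does not show that this center is trivial. If $\Ad(g)=\id$, then $g$ commutes with every element of $G$; transitivity then gives, for $k\in G_{z_0}$, that $k(g\cdot z_0)=g(k\cdot z_0)=g\cdot z_0$, i.e.\ only that $g\cdot z_0$ is a fixed point of the isotropy group $G_{z_0}$. To conclude $g\cdot z_0=z_0$, hence $g=\id$, you must know that $z_0$ is the \emph{unique} fixed point of $G_{z_0}$ in $D$ (equivalently, that $G_{z_0}$ is self-normalizing in $G$). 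That is a theorem about homogeneous domains, not a consequence of the uniqueness of $1$-jets; for non-symmetric domains the isotropy group is small, so nothing here is automatic. One way to close the gap on the level of Lie algebras is Pyateskii-Shapiro's $j$-algebra positivity: $\omega\bigl([jx,x]\bigr)>0$ for $x\notin\lie{k}$ forces $\lie{z}(\lie{g})\subset\lie{k}$, and a central element fixing a point acts trivially by transitivity, whence $\lie{z}(\lie{g})=0$; the discrete part of $Z(G)$ still requires a separate argument.

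Second, the rationality step --- which you correctly single out as the heart of the matter --- is misstated in a way that would break the Chevalley argument. It is not true that ``the semisimple elements of $\lie{r}$ have rational eigenvalues'': for $\eta\in\lie{a}$ the eigenvalues of $\ad(\eta)$ are the numbers $\alpha_k(\eta)$, $\tfrac12\alpha_k(\eta)$, $\tfrac12(\alpha_l\pm\alpha_k)(\eta)$, and since $\alpha_1,\dotsc,\alpha_r$ are linearly independent these values are, for generic $\eta$, irrational and pairwise irrationally related. What actually saves the torus is a different rationality: every root is a half-integral combination of $\alpha_1,\dotsc,\alpha_r$, so the roots span over $\mbb{Q}$ a space of dimension exactly $r=\dim\lie{a}$; consequently every $\mbb{Q}$-linear functional on this span is realized by evaluation at a unique element of $\lie{a}$, which says precisely that $\ad(\lie{a})$ coincides with its algebraic hull --- the hull being the algebra of semisimple endomorphisms whose eigenvalue assignment respects all $\mbb{Z}$-linear relations among the roots. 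Two further glosses in the same step: Chevalley's criterion also demands the hulls of the \emph{elliptic} semisimple parts, which the $j$-algebra roots do not see (these are handled by the separate fact that compact subgroups of ${\rm{GL}}(N,\mbb{R})$ are automatically real-algebraic); and the algebraicity of the affine automorphism group of a Siegel domain is not ``clear'' --- preserving an open convex cone is not visibly a polynomial condition, and the algebraicity of $G(\Omega)$ for homogeneous cones is itself a theorem of Vinberg.
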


Recall that a real Lie algebra $\lie{s}$ is called split solvable if it is
solvable and if the eigenvalues of $\ad(\xi)$ are real for every
$\xi\in\lie{s}$. A Lie group is called split solvable if it is simply-connected
and if its Lie algebra is split solvable. If $G$ is semi-simple, the Iwasawa 
decomposition $K\times A\times N\to G$ exhibits $G$ as diffeomorphic to the
product of its maximal compact subgroup $K$ and its maximal split solvable
subgroup $S:=AN\cong A\ltimes N$. The following theorem of Vinberg generalizes
this decomposition to the group $G=\Aut_\mathcal{O}(D)^0$ for arbitrary bounded
homogeneous domains $D$.

\begin{theorem}[\cite{Vin4}]\label{Thm:Vinberg}
Let $H$ be the connected component of a real-algebraic group. Then there exist
a maximal compact subgroup $K$ and a maximal split solvable subgroup $S$ of $H$
such that the map $K\times S\to H$, $(k,s)\mapsto ks$, is a diffeomorphism. Each
maximal split solvable subgroup of $H$ is conjugate to $S$ by an inner
automorphism of $H$.
\end{theorem}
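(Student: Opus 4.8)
\emph{Proof plan.} The plan is to build the pair $(K,S)$ from the algebraic Levi decomposition of $H$ and then to deduce both the diffeomorphism and the conjugacy by reducing everything to the already classical reductive case. First I would write $H=L\ltimes U$, where $U$ is the unipotent radical of the ambient real-algebraic group (a simply connected, hence split solvable, nilpotent group) and $L$ is a maximal reductive subgroup; the existence of such a Levi factor, and its uniqueness up to conjugation by $U$, is Mostow's theorem. The reductive group $L$ carries an Iwasawa decomposition $L=K\,A\,N_L$ with $K$ maximal compact, $A$ the maximal $\mbb{R}$--split torus part and $N_L$ unipotent. I then set $S:=(AN_L)\ltimes U$ and $K$ as above.

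The first point to verify is that $S$ is split solvable, which is a subalgebra-level statement. Writing $\lie{s}=\lie{a}\oplus\lie{n}_L\oplus\lie{u}$, the summand $\lie{u}$ is a nilpotent ideal, $\lie{n}_L$ acts nilpotently, and $\lie{a}$ acts on $\lie{n}_L\oplus\lie{u}$ with real eigenvalues, because an $\mbb{R}$--split torus acts on any algebraic representation with real weights. Hence $\ad(\xi)$ has real spectrum for every $\xi\in\lie{s}$, so $\lie{s}$ is split solvable; since $S\cong\exp(\lie{s})$ is diffeomorphic to a Euclidean space, $S$ is split solvable as a group. For the diffeomorphism I would compose the Iwasawa diffeomorphism $K\times(AN_L)\to L$ with the semidirect-product diffeomorphism $L\times U\to H$ and regroup: the product map $K\times S\to H$ is a diffeomorphism precisely because $S=(AN_L)\ltimes U$ is itself a semidirect product of closed subgroups. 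Maximality of $S$ is then a dimension count: $\dim S=\dim H-\dim K$, while for any split solvable subgroup $S'$ one has $K\cap S'=\{e\}$ and $\lie{k}\cap\lie{s}'=0$ (a split solvable group has no nontrivial compact subgroup), so the multiplication map $K\times S'\to H$ is an injective immersion and $\dim S'\le\dim H-\dim K=\dim S$. The same remark, applied to $U$, shows that $K$ is maximal compact in $H$.

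The hard part is the conjugacy of maximal split solvable subgroups, and this is where I expect the real work to lie. The strategy is to show that every maximal split solvable subgroup $S'$ arises from the construction above, i.e.\ that $S'=(A'N_{L'})\ltimes U$ for a maximal reductive $L'$ and an Iwasawa datum in $L'$. The key lemma is that $S'$ must contain $U$: the product $U\cdot S'$ is again split solvable (using that $U$ is normal and consists of unipotent elements, so the semisimple, hyperbolic parts of its elements are trivial), whence maximality forces $U\subseteq S'$ and exhibits $S'/U$ as a maximal split solvable subgroup of $H/U\cong L$. Granting this, conjugacy follows by combining three conjugacy statements: maximal reductive subgroups are $U$--conjugate (Mostow), maximal compact subgroups are conjugate (Cartan--Iwasawa--Mostow), and the $AN$--parts of an Iwasawa decomposition of a reductive group are conjugate (conjugacy of minimal parabolics).

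The genuine obstacle is organising these conjugations so that they are mutually compatible and so that the split condition is preserved at every step; concretely one must check that the conjugating element can be chosen in the identity component and that passing to $H/U$ does not lose track of which elements are hyperbolic rather than elliptic. I expect the verification that $U\subseteq S'$, and that the quotient construction is exhaustive, to be the most delicate point, since it is exactly here that the non-reductive nature of $H$ obstructs a direct appeal to the semisimple Iwasawa theory.
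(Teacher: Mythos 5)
The paper gives no proof of this statement: it is Vinberg's theorem, quoted from \cite{Vin4} and used as a black box, so your proposal can only be measured against the literature. Your architecture --- Levi--Mostow decomposition $H=L\ltimes U$, Iwasawa decomposition of $L$, setting $S:=(AN_L)\ltimes U$, and reducing the conjugacy assertion to the reductive case by showing $U\subseteq S'$ --- is indeed the standard route. Two of your steps, however, contain genuine gaps. The first is the claim $\lie{k}\cap\lie{s}'=0$, on which your entire maximality argument rests. The justification offered, that a split solvable group has no nontrivial compact subgroup, gives only $K\cap S'=\{e\}$ as abstract groups and says nothing about the Lie algebras. Under the paper's intrinsic definition of a split solvable subgroup (simply connected, real $\ad$--spectrum on its own Lie algebra) the claim is in fact false: a dense one-parameter wind in a maximal torus of $K$ is isomorphic to $\mbb{R}$, simply connected with abelian Lie algebra, hence split solvable, yet its Lie algebra lies inside $\lie{k}$. (This example shows the statement is sensitive to the definition: for $H$ a compact torus, dense winds are maximal split solvable in the intrinsic sense but are not conjugate to one another.) To run your argument one must use Vinberg's actual notion of \emph{triangular} subgroups --- real eigenvalues of the elements themselves in the ambient algebraic representation --- under which a nonzero $\xi\in\lie{k}\cap\lie{s}'$ would be semisimple with purely imaginary eigenvalues and simultaneously real-triangularizable, forcing $\xi=0$. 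A related, smaller gap occurs twice: realness of the spectrum of $\ad(\xi)$ for general $\xi=\xi_a+\xi_n+\xi_u$ (and likewise split solvability of $U\cdot S'$) cannot be checked summand by summand, since eigenvalues are not additive; one needs Lie's theorem together with the fact that the triangular characters vanish on the ad-nilpotent ideal $\lie{n}_L\oplus\lie{u}$, so that the spectrum of $\ad(\xi)$ equals that of $\ad(\xi_a)$.

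The more serious gap is the conjugacy statement in the reductive case. Conjugacy of minimal parabolic subgroups yields only that the $AN$--groups arising from different Iwasawa decompositions of $L$ are mutually conjugate. What is missing is that an \emph{arbitrary} maximal split solvable (triangular) subgroup of $L$ is contained in some minimal parabolic --- equivalently, fixes a point on the real flag manifold. That containment is the real Borel fixed point theorem, i.e.\ precisely the ``Morozov--Borel theorem for real Lie groups'' which is the title and the content of the cited reference \cite{Vin4}. You correctly flag the exhaustiveness of your construction as the delicate point, but the plan offers no idea for proving it; as written, the proposal assumes the crux of the theorem rather than establishing it.
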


\begin{remark}
Let $K\times S\to G$ be the decomposition of $G$ from
Theorem~\ref{Thm:Vinberg}. Then $S$ acts simply transitively on $D$.
\end{remark}

For later use we collect some properties of split solvable Lie groups.

\begin{theorem}
Let $S$ be a split solvable Lie group.
\begin{enumerate}
\item The group $S$ is isomorphic to a closed subgroup of the group of upper
triangular matrices in ${\rm{GL}}(N,\mbb{R})$.
\item The exponential map $\exp\colon\lie{s}\to S$ is a diffeomorphism.
\item Every connected subgroup of $S$ is closed and simply-connected.
\item For each element $g\in S$ the group $\{g^k;\ k\in\mbb{Z}\}$ is a discrete
subgroup of $S$ isomorphic to $\mbb{Z}$.
\item Let $S'\subset S$ be a connected subgroup and let $(S')^\mbb{C}\subset
S^\mbb{C}$ be their universal complexifications in the sense of~\cite{Ho}. Then
the homogeneous space $S^\mbb{C}/(S')^\mbb{C}$ is biholomorphic to
$\mbb{C}^{\dim S-\dim S'}$.
\end{enumerate}
\end{theorem}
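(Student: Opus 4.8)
The plan is to treat statement (2) as the structural core, to derive (3) and (4) from it, and then to establish (1) and finally (5). For (2), the key computation is that the differential of $\exp$ at $X\in\lie{s}$ equals $(dL_{\exp X})_e$ composed with the operator $\frac{1-e^{-\ad X}}{\ad X}=\sum_{k\ge 0}\frac{(-1)^k}{(k+1)!}(\ad X)^k$ on $\lie{s}$. Its eigenvalues are $\frac{1-e^{-\lambda}}{\lambda}$, where $\lambda$ runs through the eigenvalues of $\ad X$, and this vanishes only for $\lambda\in 2\pi i(\mbb{Z}\setminus\{0\})$. Since $\lie{s}$ is split solvable, every such $\lambda$ is real, so the operator is invertible and $\exp$ is a local diffeomorphism at every point; this is exactly where the split hypothesis is indispensable. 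It then remains to prove that $\exp$ is bijective, which I would do by induction on $\dim\lie{s}$: choosing a codimension-one ideal $\lie{h}\supset[\lie{s},\lie{s}]$ and a complement $\mbb{R}\xi$, one writes $S=H\rtimes\exp(\mbb{R}\xi)$ with $\exp|_{\lie{h}}\colon\lie{h}\to H$ a diffeomorphism by induction, and uses the real spectrum of $\ad\xi$ to solve $\exp X=g$ uniquely in the fibres of $S\to S/H\cong\mbb{R}$. (Alternatively one may invoke the classical theorem that $\exp$ is a diffeomorphism for every simply-connected split solvable group.)

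Statements (3) and (4) then follow formally. A subalgebra $\lie{s}'\subset\lie{s}$ is $\ad(\xi)$-invariant for $\xi\in\lie{s}'$, so the eigenvalues of $\ad_{\lie{s}'}(\xi)$ form a subset of those of $\ad_{\lie{s}}(\xi)$ and are real; hence $\lie{s}'$ is again split solvable. Integrating the inclusion $\lie{s}'\hookrightarrow\lie{s}$ gives a homomorphism from the simply-connected group of $\lie{s}'$ whose image is the analytic subgroup $S'$; since $\exp$ is surjective on that group by (2), one obtains $S'=\exp_S(\lie{s}')$. As $\lie{s}'$ is a linear subspace and $\exp_S$ is a homeomorphism, $S'$ is closed, and $S'\cong\lie{s}'$ is simply-connected, proving (3). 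For (4) with $g\ne e$, write $g=\exp_S X$ with $X\ne 0$ by (2); then $g^k=\exp_S(kX)$, so $\{g^k\}=\exp_S(\mbb{Z}X)$ is the image under the homeomorphism $\exp_S$ of the discrete closed subgroup $\mbb{Z}X\cong\mbb{Z}$ of $(\lie{s},+)$, hence discrete, closed and isomorphic to $\mbb{Z}$.

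For (1), I would first produce a faithful representation of $\lie{s}$ by upper triangular real matrices. Complete solvability provides a full flag of ideals $0=\lie{s}_0\subset\dots\subset\lie{s}_N=\lie{s}$ on which $\ad(\lie{s})$ acts by real weights, and from this one builds a faithful representation $\rho\colon\lie{s}\to\lie{t}(N,\mbb{R})$ into the upper triangular matrices. Since $S$ is simply-connected, $\rho$ integrates to a homomorphism $\Phi\colon S\to T^+(N,\mbb{R})$ into the group of upper triangular matrices with positive diagonal, which is itself split solvable. Injectivity follows from (2): if $\Phi(\exp_S X)=\Phi(e)$ then $\exp\rho(X)=I$, and as $\rho(X)$ is triangular with real diagonal this forces $\rho(X)$ nilpotent with $\exp\rho(X)=I$, whence $\rho(X)=0$ and $X=0$. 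Finally $\Phi(S)$ is a connected subgroup of the split solvable group $T^+(N,\mbb{R})$, hence closed by (3) applied to $T^+(N,\mbb{R})$; this realizes $S$ as a closed subgroup of upper triangular matrices.

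The main obstacle is (5). Here $\exp$ is no longer a biholomorphism on $\lie{s}^{\mbb{C}}$ (the differential degenerates at points $iY$ where $\ad Y$ has an eigenvalue in $2\pi\mbb{Z}$), so a direct argument is unavailable and I would proceed by induction on $\dim S$. Choosing a codimension-one ideal $\lie{h}\supset[\lie{s},\lie{s}]$, one distinguishes two cases. If $\lie{s}'\subset\lie{h}$, the projection $S^{\mbb{C}}\to S^{\mbb{C}}/H^{\mbb{C}}\cong\mbb{C}$ exhibits $S^{\mbb{C}}/(S')^{\mbb{C}}$ as a holomorphic fibre bundle over $\mbb{C}$ with fibre $H^{\mbb{C}}/(S')^{\mbb{C}}\cong\mbb{C}^{m-1}$ by induction; if $\lie{s}=\lie{h}+\lie{s}'$, then $S^{\mbb{C}}=H^{\mbb{C}}(S')^{\mbb{C}}$ and the quotient is biholomorphic to $H^{\mbb{C}}/(H\cap S')^{\mbb{C}}$, again handled by induction. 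The split (real-weight) hypothesis is what guarantees that the one-dimensional complex quotients arising are additive groups $\mbb{C}$ rather than $\mbb{C}^*$, and that the relevant complex subgroups are closed; the total space is then built as an iterated affine-line bundle over a contractible Stein base, which is holomorphically trivial, yielding $S^{\mbb{C}}/(S')^{\mbb{C}}\cong\mbb{C}^{\dim S-\dim S'}$. Verifying closedness of the complexified subgroups $H^{\mbb{C}}$ and the triviality of these bundles is the part I expect to require the most care.
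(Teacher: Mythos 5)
You should know at the outset that the paper does not actually prove this theorem: its entire proof consists of citing Vinberg's encyclopedia volume \cite{Vin3} for statements (1)--(3), remarking that (4) is a direct consequence of (2), and citing \cite{HuOe} for (5). So your proposal is necessarily a different route, namely a self-contained reconstruction of the classical facts. Where the paper does argue, you argue the same way: your derivation of (4) from (2), writing $g=\exp_S X$ and observing that $\{g^k\}=\exp_S(\mbb{Z}X)$ is the homeomorphic image of a discrete closed subgroup of $(\lie{s},+)$, is exactly the paper's one real step. Your treatments of (2) and (3) are sound: the spectral computation for $d\exp$ uses splitness precisely where it is needed, and the identification $S'=\exp_S(\lie{s}')$ (via surjectivity of $\exp$ on the simply connected group of $\lie{s}'$, which is again split solvable since its adjoint spectra are subsets of the ambient ones) correctly yields closedness and simple connectivity. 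Your plan for (5) is in spirit the cited Huckleberry--Oeljeklaus argument: induction over a codimension-one ideal, the bundle theorem of \cite{Ste}, and Grauert's Oka principle over contractible Stein bases. The delicate points you flag are the genuine ones; note that the connectedness of $H^{\mbb{C}}\cap(S')^{\mbb{C}}$, needed to identify it with $(H\cap S')^{\mbb{C}}$ in your second case, follows from the homotopy sequence of the bundle $H^{\mbb{C}}\to H^{\mbb{C}}/\bigl(H^{\mbb{C}}\cap(S')^{\mbb{C}}\bigr)\cong S^{\mbb{C}}/(S')^{\mbb{C}}$, since the total space and the base are simply connected.

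The one step that would not survive scrutiny as written is in (1). A full flag of ideals $0=\lie{s}_0\subset\dotsb\subset\lie{s}_N=\lie{s}$ triangularizes the \emph{adjoint} representation over $\mbb{R}$, but $\ad$ is not faithful (its kernel is the center of $\lie{s}$), so the flag alone does not ``build a faithful representation $\rho\colon\lie{s}\to\lie{t}(N,\mbb{R})$''. Nor can you repair this by starting from an arbitrary faithful representation supplied by Ado's theorem, because such a representation need not be real-triangularizable even when $\lie{s}$ is split solvable: the abelian algebra $\mbb{R}$ acts faithfully by rotations $t\mapsto\left(\begin{smallmatrix}0&-t\\t&0\end{smallmatrix}\right)$ with non-real spectrum. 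What is actually required is an Ado-type theorem adapted to completely solvable algebras --- a faithful representation all of whose weights are real --- and producing it (for instance by induction on a codimension-one ideal, with a separate device to see the center faithfully) is precisely the content of the classical result the paper disposes of by citation. Your downstream steps in (1) --- integrating $\rho$ to the simply connected group, injectivity of $\Phi$ via (2), and closedness of the image via (3) applied to the triangular group --- are fine once that representation exists, but its existence is the hard part and must either be proved or quoted.
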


\begin{proof}
The first three statements are classical (see for example~\cite{Vin3}). The
fourth assertion is a direct consequence of the second one. A proof of the last
assertion can be found in~\cite{HuOe}.
\end{proof}

\subsection{Siegel domains and the grading of $\lie{g}$}

In this subsection we will describe the notion of Siegel domains of the first
and of the second kind. Our motivation for the study of these domains comes
from the fact that each bounded homogeneous domain can be realized as a Siegel
domain (\cite{GinPyaVin}). In addition we discuss the grading of $\lie{g}$ which
has been introduced in~\cite{KaMatOch}.

Let $V$ be a finite-dimensional real vector space and let $\Omega\subset V$ be a
regular cone, i.\,e.\ an open convex cone which does not contain any affine
line.

\begin{definition}
The tube domain $D:=D(\Omega):=\bigl\{z\in V^\mbb{C};\ \im(z)\in\Omega\bigr\}
=V+i\Omega$ is called the Siegel domain of the first kind associated with
$\Omega$.
\end{definition}

\begin{remark}
The assumption that $D$ is a tube domain over a regular cone is quite strong.
Although the unit ball in $\mbb{C}^n$ is biholomorphically equivalent to a tube
domain over a convex domain in $\mbb{R}^n$, it can not be realized as a Siegel
domain of the first kind.
\end{remark}

The automorphism group $G(\Omega)$ of $\Omega$ is defined by
\begin{equation*}
G(\Omega):=\bigl\{g\in{\rm{GL}}(V);\ g(\Omega)=\Omega\bigr\}.
\end{equation*}
Since the condition $g(\Omega)=\Omega$ is equivalent to $g(\overline{\Omega})=
\overline{\Omega}$, the group $G(\Omega)$ is closed in ${\rm{GL}}(V)$ and hence
a Lie group. We embed $G(\Omega)$ into the automorphism group of $D=D(\Omega)$
by $g\mapsto\varphi_g$ with $\varphi_g(z)=gz$.

Let $W$ be a finite-dimensional complex vector space. A map $\Phi\colon W\times
W\to V^\mbb{C}$ is called $\Omega$--Hermitian if the following holds:
\begin{enumerate}
\item For all $w'\in W$ the map $w\mapsto\Phi(w,w')$ is complex-linear.
\item We have $\Phi(w',w)=\overline{\Phi(w,w')}$ for all $w,w'\in W$.
\item We have $\Phi(w,w)\in\overline{\Omega}$ for all $w\in W$, and
$\Phi(w,w)=0$ if and only if $w=0$.
\end{enumerate}

\begin{remark}
If $V=\mbb{R}$ and $\Omega=\mbb{R}^{>0}$, then an $\Omega$--Hermitian form is
the same as a positive definite Hermitian form on $W$.
\end{remark}

\begin{definition}
Given $\Omega$ and $\Phi$ as above, the domain
\begin{equation*}
D:=D(\Omega,\Phi):=\bigl\{(z,w)\in V^\mbb{C}\times W;\ \im(z)-\Phi(w,w)
\in\Omega\bigr\}
\end{equation*}
is called the Siegel domain of the second kind associated to $\Omega$ and
$\Phi$.
\end{definition}

\begin{proposition}\label{Prop:SiegelDomain}
Every Siegel domain of the first or second kind is convex and biholomorphically
equivalent to a bounded domain. Hence, each Siegel domain $D$ is a domain of
holomorphy and its automorphism group is a real Lie group acting properly on
$D$.
\end{proposition}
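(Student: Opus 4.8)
The plan is to treat the two assertions separately: convexity, which is elementary and computational, and equivalence to a bounded domain, which is the substantial point. The two stated consequences—domain of holomorphy and the Lie-group/properness statement—then follow formally, the first from convexity and the second from the bounded model together with H.~Cartan's theorem quoted above.

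First I would establish convexity. For a Siegel domain of the first kind $D(\Omega)=\bigl\{z\in V^\mbb{C};\ \im(z)\in\Omega\bigr\}$ this is immediate, since $z\mapsto\im(z)$ is $\mbb{R}$-linear and $\Omega$ is convex, so $D(\Omega)$ is the preimage of a convex set under a linear map. For the second kind I would reduce to the same principle by analysing the nonlinear defining map $F(z,w):=\im(z)-\Phi(w,w)$. Writing $p_j=(z_j,w_j)$ and expanding $\Phi(w,w)$ by its sesquilinearity, one obtains for $t\in[0,1]$ the identity
\begin{equation*}
F\bigl((1-t)p_0+t p_1\bigr)=(1-t)F(p_0)+t F(p_1)+t(1-t)\,\Phi(w_0-w_1,w_0-w_1).
\end{equation*}
Since $\Phi(w,w)\in\overline{\Omega}$ and $t(1-t)\ge 0$, the last term lies in $\overline{\Omega}$; using that $\Omega+\overline{\Omega}\subseteq\Omega$ for an open convex cone, the hypothesis $F(p_0),F(p_1)\in\Omega$ forces $F\bigl((1-t)p_0+tp_1\bigr)\in\Omega$. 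Hence $D(\Omega,\Phi)=F^{-1}(\Omega)$ is convex.

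The heart of the matter, and where I expect the real work to lie, is equivalence to a bounded domain. My plan is to show that the regularity of $\Omega$ forces $D$ to contain no complex affine line, which is precisely the geometric obstruction that must be absent, and then to appeal to the classical fact that a Siegel domain is biholomorphically equivalent to a bounded domain (see~\cite{Pya}). To rule out complex lines, suppose $\lambda\mapsto(p+\lambda a,\,q+\lambda b)$ parametrises such a line in $D(\Omega,\Phi)$. The quadratic term contributes $\abs{\lambda}^2\Phi(b,b)$, which dominates the linear growth of the remaining terms as $\abs{\lambda}\to\infty$; dividing $F$ by $\abs{\lambda}^2$ and passing to the limit yields $-\Phi(b,b)\in\overline{\Omega}$, while $\Phi(b,b)\in\overline{\Omega}$ by definition, so regularity ($\overline{\Omega}\cap(-\overline{\Omega})=\{0\}$) gives $\Phi(b,b)=0$ and hence $b=0$. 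With $b=0$ the condition becomes $\im(p+\lambda a)\in\Omega+\Phi(q,q)$ for all $\lambda\in\mbb{C}$, and letting $\lambda$ range over $\mbb{R}$ and over $i\mbb{R}$ exhibits an affine line inside a translate of $\Omega$ unless $a=0$; regularity again forces $a=0$. Thus $(a,b)=0$, so no complex line exists (the first kind is the special case $W=\{0\}$).

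Finally the two consequences follow formally. A convex domain in $\mbb{C}^n$ is pseudoconvex, hence a domain of holomorphy by the solution of the Levi problem, which gives the first assertion. For the second, fix a biholomorphism $F\colon D\to\tilde D$ onto a bounded domain $\tilde D$. By H.~Cartan's theorem the group $\Aut_\mathcal{O}(\tilde D)$ is a real Lie group acting properly and differentiably on $\tilde D$, and conjugation $\psi\mapsto F\psi F^{-1}$ is an isomorphism $\Aut_\mathcal{O}(D)\to\Aut_\mathcal{O}(\tilde D)$ which is a homeomorphism for the compact-open topology and intertwines the two actions. Transporting the Lie-group structure and using that properness of a group action is preserved under an equivariant homeomorphism, one concludes that $\Aut_\mathcal{O}(D)$ is a real Lie group acting properly on $D$. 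The only genuinely nontrivial ingredient in the whole argument is the boundedness of the model, that is, the no-complex-line reduction together with the cited equivalence.
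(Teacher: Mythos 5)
Your proposal is correct and takes essentially the same route as the paper: convexity is checked by an elementary computation, and the substantive point --- the biholomorphic equivalence to a bounded domain --- is delegated to~\cite{Pya}, exactly as the paper's proof does, with the two stated consequences then following formally from convexity (Levi problem) and H.~Cartan's theorem. Your additional no-complex-line argument is correct but logically supplementary, since the cited classical fact already covers the bounded realization.
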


\begin{proof}
Convexity of Siegel domains is elementary to check. For a proof of the fact that
$D$ is biholomorphically equivalent to a bounded domain we refer the reader
to~\cite{Pya}.
\end{proof}

\begin{theorem}[\cite{GinPyaVin}]
Every bounded homogeneous domain can be realized as a Siegel domain of either
the first or the second kind.
\end{theorem}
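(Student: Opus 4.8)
The plan is to transfer the problem to the Lie algebra of the split solvable group $S$ of Theorem~\ref{Thm:Vinberg}, which acts simply transitively on $D$. Fixing a base point $o\in D$, the orbit map $S\to D$, $s\mapsto s\cdot o$, is a diffeomorphism, so that $T_o D$ is identified with $\lie{s}=\Lie(S)$. First I would transport the complex structure of $D$ and an invariant K\"ahler metric, say the Bergman metric, to $\lie{s}$. This endows $\lie{s}$ with an integrable invariant complex structure, namely an endomorphism $j$ with $j^2=-\id$ satisfying $[j\xi,j\eta]=[\xi,\eta]+j[j\xi,\eta]+j[\xi,j\eta]$, together with a linear form $\omega\in\lie{s}^*$ for which $\langle\xi,\eta\rangle:=\omega([j\xi,\eta])$ is a $j$-invariant positive-definite inner product. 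A triple $(\lie{s},j,\omega)$ with these properties is a normal $j$-algebra in the sense of Pyatetskii-Shapiro (see~\cite{Pya}), and the realization theorem becomes equivalent to a structure theorem for such algebras.

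The heart of the argument is this structure theory. I would decompose $\lie{s}=\lie{a}\oplus\lie{n}$, where $\lie{n}=[\lie{s},\lie{s}]$ is the nilradical and $\lie{a}$ is an abelian complement, and analyze the root space decomposition of $\lie{n}$ under $\ad(\lie{a})$. Split solvability guarantees that all roots are real, and the compatibility of $j$ with $\omega$ should force them into the normalized pattern $\tfrac12(\alpha_k+\alpha_l)$, $\tfrac12(\alpha_k-\alpha_l)$ for $k>l$, $\tfrac12\alpha_k$, and $\alpha_k$, indexed by a distinguished system $\alpha_1,\dots,\alpha_r$ whose cardinality is the rank of $D$. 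Proving that the roots take exactly this form, and that $j$ pairs the corresponding root spaces in the prescribed manner, is the main obstacle: it is a delicate argument in which positivity of $\langle\cdot,\cdot\rangle$ must be invoked repeatedly to exclude unwanted roots and to fix the dimensions. This is the technical core that makes the original proof long.

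Once the decomposition is in hand, the Siegel data can be read off. The subalgebra spanned by $\lie{a}$ and the root spaces for the roots $\alpha_k$ and $\tfrac12(\alpha_k\pm\alpha_l)$ integrates to a split solvable group acting simply transitively on a regular cone $\Omega$ in a real vector space $V$, by Vinberg's theory of homogeneous cones. The remaining root spaces, those for the roots $\tfrac12\alpha_k$, assemble under $j$ into a complex vector space $W$, and the Lie bracket on $W$ followed by projection to $V^\mbb{C}$ yields a map $\Phi\colon W\times W\to V^\mbb{C}$ which one verifies to be $\Omega$--Hermitian. Finally I would integrate the $S$--action in these coordinates to obtain an explicit biholomorphism of $D$ onto $D(\Omega,\Phi)$ carrying the complex structures to one another; when $W=0$ this is a Siegel domain of the first kind, and otherwise one of the second kind. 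After the normalization of the root system, everything reduces to organizing the data and checking the defining properties of $\Omega$ and $\Phi$.
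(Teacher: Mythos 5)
The paper offers no proof of this statement at all: it is quoted directly from the literature (Vinberg--Gindikin--Pyatetskii-Shapiro, \cite{GinPyaVin}), and the machinery you describe --- transferring the complex structure and the Bergman metric to the Lie algebra $\lie{s}$ of the simply transitive split solvable group, obtaining a normal $j$-algebra, and reconstructing the Siegel data $(\Omega,\Phi)$ from the root space decomposition --- is precisely the theory the paper itself then reviews in its subsection on normal $j$-algebras (Proposition~\ref{Prop:Finestructure}, the grading, and the construction of $D_\lie{s}$), with proofs again delegated to \cite{Pya}. So your outline follows the standard route, and its skeleton matches the background the paper summarizes.

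As a proof, however, it has genuine gaps, the largest of which you name yourself without filling. First, that transferring the Bergman metric produces a normal $j$-algebra is not automatic: one must show that the invariant K\"ahler form on $S$ is the differential of an \emph{invariant} $1$-form $\omega$, i.e.\ exact in the complex of left-invariant forms; this exactness is a real step, not a formality. Second, the entire content of the theorem is concentrated in the structure theorem for the roots --- that they are exactly of the form $\alpha_k$, $\tfrac{1}{2}\alpha_k$, $\tfrac{1}{2}(\alpha_l\pm\alpha_k)$, with $\lie{s}_{\alpha_k}$ one-dimensional, and that $j$ pairs the root spaces as in Proposition~\ref{Prop:Finestructure} --- and you explicitly defer this (``delicate argument'', ``technical core'') rather than prove it. Third, even granting the root structure, the assertions that $\Omega=\Ad(S_0)\xi$ is a regular cone, that $\Phi$ is $\Omega$-Hermitian (in particular $\Phi(w,w)\in\overline{\Omega}$ with $\Phi(w,w)=0$ only for $w=0$), and that the resulting equivariant map is a biholomorphism of $D$ \emph{onto} $D(\Omega,\Phi)$ each require nontrivial verification. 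What you have written is a faithful roadmap of the classical proof --- nothing in it is on a wrong track --- but every hard step is left open, so it does not constitute a proof.
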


Let $D=D(\Omega,\Phi)$ be a Siegel domain. As usual we write $G$ for the
connected component of the identity in $\Aut_\mathcal{O}(D)$. Let us introduce
linear coordinates $z_k$, $1\leq k\leq\dim_\mbb{C}V^\mbb{C}$, in $V^\mbb{C}$ and
$w_\alpha$, $1\leq\alpha\leq\dim_\mbb{C}W$, in $W$. It follows from the
definition that $\lie{g}$ contains the vector field
\begin{equation*}
\delta:=\sum_kz_k\vf{z_k}+\frac{1}{2}\sum_\alpha w_\alpha\vf{w_\alpha}.
\end{equation*}

\begin{theorem}[\cite{KaMatOch}]\label{Thm:Grading}
The Lie algebra $\lie{g}$ admits a decomposition
\begin{equation*}
\lie{g}=\lie{g}_{-1}\oplus\lie{g}_{-1/2}\oplus\lie{g}_0\oplus\lie{g}_{1/2}
\oplus\lie{g}_1,
\end{equation*}
where $\lie{g}_\lambda$ is the eigenspace of $\ad(\delta)$ for the eigenvalue
$\lambda$. Then the following holds.
\begin{enumerate}
\item We have $[\lie{g}_\lambda,\lie{g}_\mu]\subset\lie{g}_{\lambda+\mu}$ for
all $\lambda,\mu\in\{\pm1,\pm1/2,0\}$.
\item The translation vector fields $\vf{z_k}$, $1\leq k\leq
\dim_\mbb{C}V^\mbb{C}$, form a basis of $\lie{g}_{-1}$. Consequently, we have
$\dim\lie{g}_{-1}=\dim_\mbb{C}V^\mbb{C}$.
\item The elements of $\lie{g}_{-1/2}$ are of the form
\begin{equation*}
2i\sum_k\Phi_k(w,c)\vf{z_k}+\sum_\alpha c_\alpha\vf{w_\alpha}
\qquad(c\in\mbb{C}^{\dim_\mbb{C}W}).
\end{equation*}
Consequently, $\dim\lie{g}_{-1/2}=2\dim_\mbb{C}W$, and $\lie{g}_{-1/2}=\{0\}$ if
and only if $D$ is a Siegel domain of the first kind.
\item The Lie subalgebra $\lie{g}_0$ consists of all elements of the form
\begin{equation*}
\sum_{k,l}a_{kl}z_k\vf{z_{l}}+\sum_{\alpha,\beta}b_{\alpha\beta}w_\alpha
\vf{w_\beta},
\end{equation*}
where the matrix $A:=(a_{kl})$ lies in the Lie algebra of $G(\Omega)$ and
$B:=(b_{\alpha\beta})\in\lie{gl}(W)$ fulfills
\begin{equation*}
A\Phi(w,w')=\Phi(Bw,w')+\Phi(w,Bw')
\end{equation*}
for all $w,w'\in W$.
\item The subalgebra $\lie{g}_{-1}\oplus\lie{g}_{-1/2}\oplus\lie{g}_0$ is the
Lie algebra of the group of affine automorphisms of $D$.
\end{enumerate}
\end{theorem}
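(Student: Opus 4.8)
The plan is to read off the entire decomposition from the single field $\delta$, realizing $\lie{g}_\lambda$ as the $\lambda$--eigenspace of the semisimple derivation $\ad(\delta)$ and then identifying each eigenspace by matching weights. First I would verify that $\delta$ actually lies in $\lie{g}$: the dilations $(z,w)\mapsto(e^tz,e^{t/2}w)$ preserve $D(\Omega,\Phi)$ since $\im(e^tz)-\Phi(e^{t/2}w,e^{t/2}w)=e^t\bigl(\im(z)-\Phi(w,w)\bigr)$ and $\Omega$ is a cone, so their generator $\delta$ is complete. Because $\delta$ generates a one--parameter group of automorphisms, $\ad(\delta)=\mathcal{L}_\delta$ is a derivation of $\lie{g}$; assigning weight $1$ to each $z_k$ and weight $\tfrac12$ to each $w_\alpha$, it sends a monomial field $z^aw^b\,\vf{z_k}$ to its multiple by $\bigl(\abs{a}+\tfrac12\abs{b}-1\bigr)$ and $z^aw^b\,\vf{w_\alpha}$ to its multiple by $\bigl(\abs{a}+\tfrac12\abs{b}-\tfrac12\bigr)$. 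Hence on polynomial fields $\ad(\delta)$ is semisimple with half--integral eigenvalues, and the bracket relation~(1) is immediate from $\ad(\delta)[X,Y]=[\ad(\delta)X,Y]+[X,\ad(\delta)Y]$.

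The crux --- and the step I expect to be the main obstacle --- is to confine the eigenvalues to $\{-1,-\tfrac12,0,\tfrac12,1\}$, equivalently to bound the weighted degrees so that the $\vf{z_k}$--coefficients have weighted degree at most $2$ and the $\vf{w_\alpha}$--coefficients at most $\tfrac32$. The lower bounds are automatic, since a $\vf{z_k}$--term has weight at least $-1$ and a $\vf{w_\alpha}$--term at least $-\tfrac12$; only the upper bound carries content. For this I would use that $D$ is biholomorphic to a bounded domain and hence that $\Aut_\mathcal{O}(D)$ acts properly (Proposition~\ref{Prop:SiegelDomain} and~\cite{Car}): the boundedness together with completeness of every $\xi\in\lie{g}$ forces, by the estimates of~\cite{KaMatOch}, that each $\xi$ is a polynomial field of the stated bounded weighted degree, so that only the five eigenvalues survive. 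This is the one place where the fine analysis of the Siegel realization is genuinely needed; the rest is bookkeeping with weights.

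Granting the eigenvalue restriction, the explicit descriptions follow by intersecting each weight space with the tangency condition along $\partial D$. A field of weight $-1$ can have no $\vf{w_\alpha}$--part and must have constant $\vf{z_k}$--coefficients, and the real translations $z\mapsto z+a$ with $a\in V$ show that all such fields are complete; this gives statement~(2) and $\dim\lie{g}_{-1}=\dim_\mbb{C}V^\mbb{C}$. Weight $-\tfrac12$ forces a constant $\vf{w}$--part together with a $w$--linear $\vf{z}$--part, and requiring the field to be tangent to $\im(z)=\Phi(w,w)$ pins the coefficient down to $2i\Phi_k(w,c)$, giving the form in~(3) with real dimension $2\dim_\mbb{C}W$, the flows being the Heisenberg--type affine maps. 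In weight $0$ the fields are exactly $\sum_{k,l}a_{kl}z_k\vf{z_l}+\sum_{\alpha,\beta}b_{\alpha\beta}w_\alpha\vf{w_\beta}$, and invariance of $D$ translates into $A\in\Lie(G(\Omega))$ together with $A\Phi(w,w')=\Phi(Bw,w')+\Phi(w,Bw')$, which is~(4). Finally, $\lie{g}_{-1}\oplus\lie{g}_{-1/2}\oplus\lie{g}_0$ consists precisely of the fields of ordinary polynomial degree at most one in $(z,w)$, so it integrates to the group of affine automorphisms and is a subalgebra by~(1), whereas $\lie{g}_{1/2}$ and $\lie{g}_1$ account for the genuinely nonlinear automorphisms; this proves~(5).
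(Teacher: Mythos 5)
First, a point about the comparison you were asked to survive: the paper does not prove this theorem at all --- it is imported verbatim from \cite{KaMatOch} --- so your proposal can only be judged as a standalone argument. Judged that way, it contains a genuine circularity at exactly the point you yourself identify as the crux. Confining the spectrum of $\ad(\delta)$ to $\{\pm1,\pm\tfrac12,0\}$ amounts to proving that every complete holomorphic vector field on $D$ is a polynomial field of bounded weighted degree, and for precisely this step you appeal to ``the estimates of~\cite{KaMatOch}'', i.e.\ to the very paper from which the theorem is quoted. Nothing in your text establishes polynomiality, nor even that $\ad(\delta)$ acts semisimply on $\lie{g}$: a priori an element of $\lie{g}$ is merely a holomorphic vector field on $D$, it has no globally convergent expansion adapted to the weights, and one must argue (for instance via the flow $\Ad(\exp(t\delta))$ together with finite-dimensionality and closedness of $\lie{g}$) that its weighted-homogeneous components again lie in $\lie{g}$ and that only finitely many occur. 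That is the analytic heart of the theorem, and it is missing from your proof.

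Second, even granting the spectral confinement, the ``bookkeeping'' identifications of the eigenspaces are incorrect in three places, all for the same reason. In weight $0$, the $\vf{z_k}$--coefficients may a priori be quadratic in $w$ (the monomial $w_\alpha w_\beta\vf{z_k}$ has weight $2\cdot\tfrac12-1=0$), so $\lie{g}_0$ is \emph{not} ``exactly'' the linear fields on weight grounds; the $w$--quadratic terms must be excluded by the invariance/completeness analysis, which you do not carry out. In weight $-1$, you only verify that the translations by $a\in V$ are complete, giving $V\subseteq\lie{g}_{-1}$; statement (2) also requires the reverse exclusion, namely that no constant field $\sum_k ia_k\vf{z_k}$ with $0\neq a\in V$ is complete (this is where regularity of the cone --- $\Omega$ contains no affine line --- enters), since otherwise one would conclude $\dim\lie{g}_{-1}=2\dim_\mbb{R}V$. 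Finally, statement (5) does not follow from weights either: $z_k\vf{w_\alpha}$ is an affine field of weight $+\tfrac12$, so one must show that $\lie{g}_{1/2}$ contains no affine fields before identifying the affine algebra with $\lie{g}_{-1}\oplus\lie{g}_{-1/2}\oplus\lie{g}_0$. These corrections are of the same nature as the main gap: the content of the theorem lies in the interaction between completeness of the fields and the geometry of $D(\Omega,\Phi)$, and cannot be recovered from the weight calculus plus a citation of the theorem's own source.
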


Theorem~\ref{Thm:Grading} allows us to find a particularly nice maximal split
solvable subalgebra $\lie{s}$ of $\lie{g}$.

\begin{proposition}
Let $\lie{s}_0$ be a maximal split solvable subalgebra of $\lie{g}_0$. Then
$\lie{s}:=\lie{g}_{-1}\oplus\lie{g}_{-1/2}\oplus\lie{s}_0$ is a maximal split
solvable subalgebra of $\lie{g}$.
\end{proposition}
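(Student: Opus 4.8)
The plan is to realize $\lie{s}$ as a semidirect product dictated by the grading, to check the split condition fibrewise over the grading, and to force maximality by a dimension count coming from Vinberg's theorem. First I would record the Lie-theoretic structure. By the grading relations $[\lie{g}_\lambda,\lie{g}_\mu]\subset\lie{g}_{\lambda+\mu}$ of Theorem~\ref{Thm:Grading}, the subspace $\lie{n}:=\lie{g}_{-1}\oplus\lie{g}_{-1/2}$ satisfies $[\lie{g}_{-1/2},\lie{g}_{-1/2}]\subset\lie{g}_{-1}$ and $[\lie{g}_{-1},\lie{n}]=0$, so it is a two-step nilpotent ideal; moreover $[\lie{g}_0,\lie{g}_{-1}]\subset\lie{g}_{-1}$ and $[\lie{g}_0,\lie{g}_{-1/2}]\subset\lie{g}_{-1/2}$ show that $\lie{g}_0$, and in particular $\lie{s}_0$, normalizes $\lie{n}$. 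Since $\lie{s}_0$ is a subalgebra, $\lie{s}=\lie{s}_0\ltimes\lie{n}$ is a subalgebra, and it is solvable because $\lie{n}$ is nilpotent and $\lie{s}/\lie{n}\cong\lie{s}_0$ is solvable.

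The heart of the argument is to show that $\lie{s}$ is split, i.e.\ that $\ad(\xi)$ has only real eigenvalues for every $\xi\in\lie{s}$. Since $\lie{s}$ is $\ad$-invariant in $\lie{g}$, it suffices to prove this for $\ad_\lie{g}(\xi)$. Writing $\xi=\xi_0+\xi_n$ with $\xi_0\in\lie{s}_0$ and $\xi_n\in\lie{n}$, the operator $\ad(\xi_n)$ strictly lowers the grading and is therefore nilpotent on $\lie{g}$, while $\ad(\xi_0)$ preserves each $\lie{g}_\lambda$. Ordering a basis of $\lie{g}$ by decreasing degree makes $\ad(\xi)$ block triangular with diagonal blocks $\ad(\xi_0)|_{\lie{g}_\lambda}$, so the eigenvalues of $\ad(\xi)$ are exactly those of $\ad(\xi_0)$ on the graded pieces. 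On $\lie{g}_0$ these are real because $\lie{s}_0$ is split solvable. On $\lie{g}_{-1}$ the action of $\xi_0$ is given by its cone part $A\in\Lie G(\Omega)$ (Theorem~\ref{Thm:Grading}(4)); here I would invoke that the split solvable group generated by $\lie{s}_0$ acts on the regular cone $\Omega$ and that a group acting simply transitively on a regular cone is triangularizable over $\mbb{R}$, so that $A$ has real eigenvalues on $V^\mbb{C}=\lie{g}_{-1}$. On $\lie{g}_{-1/2}$ the action is through the endomorphism $B$ of $W$; the compatibility $A\Phi(w,w')=\Phi(Bw,w')+\Phi(w,Bw')$ together with the vanishing of the compact part of $\lie{s}_0$ (a nonzero split solvable subalgebra cannot sit inside the compact unitary algebra of $\Phi$) forces the eigenvalues of $B$ to be real as well. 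The pieces $\lie{g}_{1/2}$ and $\lie{g}_1$ are handled symmetrically. Hence all eigenvalues of $\ad(\xi)$ are real and $\lie{s}$ is split solvable.

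For maximality I would argue by dimension. By Theorem~\ref{Thm:Vinberg} all maximal split solvable subalgebras of $\lie{g}$ are conjugate, hence have the same dimension, which by the remark following that theorem equals $\dim_\mbb{R}S=\dim_\mbb{R}D$ because $S$ acts simply transitively on $D$. As every split solvable subalgebra is contained in a maximal one, no split solvable subalgebra can exceed this dimension. Now $\dim\lie{s}=\dim_\mbb{R}V+2\dim_\mbb{C}W+\dim\lie{s}_0$ by Theorem~\ref{Thm:Grading}, while $\dim_\mbb{R}D=2\dim_\mbb{R}V+2\dim_\mbb{C}W$. Since $\lie{s}_0$ is a maximal split solvable subalgebra of $\lie{g}_0$ and the automorphism group of the homogeneous cone $\Omega$ contains a split solvable subgroup acting simply transitively on $\Omega$, one has $\dim\lie{s}_0=\dim\Omega=\dim_\mbb{R}V$. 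Therefore $\dim\lie{s}=\dim_\mbb{R}D$, and being split solvable of maximal possible dimension, $\lie{s}$ is a maximal split solvable subalgebra.

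I expect the real obstacle to be the split condition on the off-diagonal pieces $\lie{g}_{\pm1},\lie{g}_{\pm1/2}$: split solvability of $\lie{s}_0$ is intrinsically a statement about $\ad_{\lie{s}_0}$, and transferring it to real eigenvalues for the $\lie{g}$-action genuinely uses the algebraicity of $\lie{g}$ from Theorem~\ref{Thm:Kaneyuki} (so that the hyperbolic and elliptic parts of $\xi_0$ are well defined and visible in every representation) together with the geometric fact that a simply transitive action on a regular cone is $\mbb{R}$-triangularizable. The analogous cone fact $\dim\lie{s}_0=\dim_\mbb{R}V$ is also the one nontrivial input needed for the dimension count in the maximality step.
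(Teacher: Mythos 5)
The paper itself offers no argument for this proposition---its ``proof'' is a citation of Proposition~2.8 in~\cite{Kan}---so your attempt has to stand entirely on its own. Its architecture is reasonable, and the block-triangularity reduction (eigenvalues of $\ad(\xi)$ equal those of $\ad(\xi_0)$ on the graded pieces) is correct. But there is a genuine gap, and it sits exactly where you flagged ``the real obstacle'': every eigenvalue claim in your proof silently upgrades split solvability of $\lie{s}_0$, which by the paper's definition is an \emph{intrinsic} condition (real eigenvalues of $\ad_{\lie{s}_0}$ on $\lie{s}_0$ itself), to real eigenvalues for the action of $\lie{s}_0$ on spaces \emph{outside} $\lie{s}_0$. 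That upgrade is not merely unproven; the intermediate claim you use to get it---``a nonzero split solvable subalgebra cannot sit inside the compact unitary algebra of $\Phi$''---is false under the paper's definition, since every abelian subalgebra of the unitary algebra $\lie{u}(W,\Phi)$ is split solvable in the intrinsic sense. Worse, whenever $W\neq\{0\}$, Theorem~\ref{Thm:Grading}(4) shows that $\lie{g}_0$ contains the element with $(A,B)=(0,i\,\id_W)$ (the compatibility identity holds because $\Phi$ is conjugate-linear in its second argument), and this element is \emph{central} in $\lie{g}_0$. Adjoining a central element to an intrinsically split solvable subalgebra keeps it intrinsically split solvable, so any maximal such $\lie{s}_0$ contains $(0,i\,\id_W)$; but this element acts on $\lie{g}_{-1/2}\cong W$ by $c\mapsto \mp ic$, with eigenvalues $\pm i$, so the resulting $\lie{s}$ is not split solvable at all. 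Thus under the paper's literal definition the proposition is even false: for $\mbb{H}_n$ one has $\lie{g}_0\cong\mbb{R}\oplus\lie{u}(n-1)$, a maximal intrinsically split solvable subalgebra is $\mbb{R}\oplus\lie{t}$ ($\lie{t}$ a maximal torus), of dimension $n$, whereas the correct choice is $\lie{s}_0=\mbb{R}\delta$ of dimension $1$.

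The proposition is therefore only true for the notion Kaneyuki actually uses, namely split solvable (trigonalizable) \emph{relative} to the ambient algebraic structure---real eigenvalues in a faithful algebraic representation of $\lie{g}$, not just on $\lie{s}_0$. Once that definition is in force, your splitness step becomes essentially tautological via the block-triangular reduction, but then the entire burden shifts to maximality, and there your argument assumes precisely the nontrivial content: that $\lie{s}_0\cap\lie{u}(W,\Phi)=\{0\}$ (now provable: an element whose eigenvalues are simultaneously real and purely imaginary is semisimple and nilpotent, hence zero); that the image of $\lie{s}_0$ in $\Lie G(\Omega)$ corresponds to a group acting \emph{simply transitively} on the homogeneous cone $\Omega$, which is a substantive theorem of Vinberg's cone theory and is what both your $\lie{g}_{-1}$ step and the count $\dim\lie{s}_0=\dim_\mbb{R}V$ rest on; and that maximal split solvable subalgebras of $\lie{g}$ are conjugate and hence equidimensional, whereas the paper's Theorem~\ref{Thm:Vinberg} is a statement about closed subgroups, not subalgebras, so even this transfer needs an argument. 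Filling in these points amounts to reproving Kaneyuki's Proposition~2.8; as written, your proof either presupposes them or replaces them with assertions that fail for the definitions given in the paper.
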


\begin{proof}
This is the content of Proposition~2.8 in~\cite{Kan}.
\end{proof}

\subsection{Normal $j$--algebras}

We have seen that every bounded homogeneous domain $D$ is diffeomorphic to a
split solvable Lie algebra $\lie{s}$. Transferring the complex structure and the
Bergman metric of $D$ to $\lie{s}$ we obtain the notion of a normal
$j$--algebra which was introduced by Pyateskii-Shapiro. We follow the
exposition in~\cite{Is}. Complete proofs and further details can be found
in~\cite{Pya}.

\begin{definition}
A normal $j$--algebra is a pair $(\lie{s},j)$ of a split solvable Lie
algebra $\lie{s}$ and a complex structure $j$ on $\lie{s}$ such that
\begin{equation}\label{Eqn:Integrability}
[\xi,\xi']+j[j\xi,\xi']+j[\xi,j\xi']-[j\xi,j\xi']=0
\end{equation}
for all $\xi,\xi'\in\lie{s}$. In addition, we demand the existence of a linear
form $\omega\in\lie{s}^*$ such that
\begin{equation*}
\langle\xi,\xi'\rangle_\omega:=\omega\bigl([j\xi,\xi']\bigr)
\end{equation*}
defines a $j$--invariant inner product on $\lie{s}$.
\end{definition}

\begin{remark}
If we extend the complex structure $j$ on $\lie{s}$ to a left invariant complex
structure $J$ on the simply-connected group $S$, then
condition~\eqref{Eqn:Integrability} guarantees that $S$ is a complex manifold
with respect to $J$.
\end{remark}

Let us describe the fine structure of a normal $j$--algebra $(\lie{s},j)$ via a
root space decomposition. Since $\lie{s}$ is solvable, its derived algebra
$\lie{n}:=[\lie{s},\lie{s}]$ is nilpotent. Let $\lie{a}$ denote the orthogonal
complement of $\lie{n}$ with respect to $\langle\cdot,\cdot\rangle_\omega$.
Hence, we obtain $\lie{s}=\lie{a}\oplus\lie{n}$ and one can show that $\lie{a}$
is a maximal Abelian subalgebra consisting of semi-simple elements of
$\lie{s}$. The dimension $r:=\dim\lie{a}$ is called the rank of $\lie{s}$. Since
$\lie{s}$ is split solvable, we can form the root space decomposition
\begin{equation}\label{Eqn:RootSpaceDecomp}
\lie{s}=\lie{a}\oplus\bigoplus_{\alpha\in\Delta}\lie{s}_\alpha,
\end{equation}
where we write $\lie{s}_\alpha:=\bigl\{\xi\in\lie{s};\ [\eta,\xi]=\alpha(\eta)
\xi\bigr\}$ for $\alpha\in\lie{a}^*$ and $\Delta:=\Delta(\lie{s},\lie{a}):=
\bigl\{\alpha\in\lie{a}^*\setminus\{0\};\ \lie{s}_\alpha\not=\{0\}\bigr\}$.

\begin{proposition}\label{Prop:Finestructure}
Let $(\lie{s},j)$ be a normal $j$--algebra.
\begin{enumerate}
\item The root space decomposition~\eqref{Eqn:RootSpaceDecomp} is orthogonal
with respect to $\langle\cdot,\cdot\rangle_\omega$.
\item There exist $r$ linearly independent roots $\alpha_1,\dotsc,\alpha_r$
such that all other roots are of the form
\begin{equation*}
\tfrac{1}{2}\alpha_k\ (1\leq k\leq r)\quad\text{and}\quad\tfrac{1}{2}
(\alpha_l\pm\alpha_k)\ (1\leq k<l\leq r).
\end{equation*}
Note that not all possibilities have to occur.
\item Let $(\eta_1,\dotsc,\eta_r)$ be the basis of $\lie{a}$ dual to
$(-\alpha_1,\dotsc,-\alpha_r)$ and set $\xi_k:=-j\eta_k$. Then we have
$\lie{s}_{\alpha_k}=\mbb{R}\xi_k$ for all $1\leq k\leq r$.
\item For all $1\leq k<l\leq r$ we have $j\lie{s}_{(\alpha_l-\alpha_k)/2}
=\lie{s}_{(\alpha_l+\alpha_k)/2}$.
\item For all $1\leq k\leq r$ we have $j\lie{s}_{\alpha_k/2}=
\lie{s}_{\alpha_k/2}$.
\end{enumerate}
\end{proposition}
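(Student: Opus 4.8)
The five assertions are tightly interlocked, so the plan is to prove them simultaneously by an induction on the rank $r=\dim\lie{a}$, in the spirit of Pyateskii--Shapiro's analysis; the single tool driving every step is the integrability condition~\eqref{Eqn:Integrability}, which I would evaluate repeatedly with one argument in $\lie{a}$ and the other a root vector. Before starting the induction I would record two elementary facts. First, $j$ is skew-adjoint for $\langle\cdot,\cdot\rangle_\omega$, which is just a restatement of $j$--invariance together with $j^2=-\id$. Second, $j\lie{a}\subset\lie{n}$: for $\eta,\eta'\in\lie{a}$ one computes $\langle j\eta,\eta'\rangle_\omega=\omega\bigl([j(j\eta),\eta']\bigr)=-\omega\bigl([\eta,\eta']\bigr)=0$ because $\lie{a}$ is abelian, so $j\eta\perp\lie{a}$; since $\lie{a}$ is by definition the orthogonal complement of $\lie{n}$, this forces $j\eta\in\lie{n}$, and hence $j$ carries $\lie{a}$ isomorphically into $\lie{n}$.

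Next I would construct the distinguished data of statements (2) and (3). Decomposing $j\lie{a}$ along~\eqref{Eqn:RootSpaceDecomp} and using that $\lie{a}$ is a maximal abelian subalgebra of semisimple elements, one extracts a basis $\eta_1,\dotsc,\eta_r$ of $\lie{a}$ and linearly independent roots $\alpha_1,\dotsc,\alpha_r$ for which $\xi_k:=-j\eta_k$ spans $\lie{s}_{\alpha_k}$ and $\alpha_k(\eta_l)=-\delta_{kl}$. Concretely, the duality is forced by evaluating~\eqref{Eqn:Integrability} on pairs drawn from $\lie{a}$, which converts each bracket $[\eta,\xi_k]$ into a multiple of $\xi_k$ and thereby pins down the weight $\alpha_k$. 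This yields (3) and the linear independence of the $\alpha_k$ at once; the delicate point already lurking here is the multiplicity-one statement $\lie{s}_{\alpha_k}=\mbb{R}\xi_k$, which the induction must deliver.

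The core of the argument is then to show that every root pairs with the $\eta_k$ only through the values permitted by the list in (2); equivalently, the eigenvalues of each $\ad(\eta_k)$ acting on $\lie{s}$ are restricted to the values that correspond to the roots $\tfrac12\alpha_k$ and $\tfrac12(\alpha_l\pm\alpha_k)$, with the extreme value occurring only on $\lie{s}_{\alpha_k}$. I would obtain this by combining positivity of $\langle\cdot,\cdot\rangle_\omega$ with the grading $[\lie{s}_\alpha,\lie{s}_\beta]\subset\lie{s}_{\alpha+\beta}$ and~\eqref{Eqn:Integrability}, which together exclude any root outside the list and force the $\alpha_k$ to be strongly orthogonal; statement (1) is obtained along the way, since distinct root spaces then lie in distinct $\ad(\lie{a})$--eigenspaces on which the form is symmetric. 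Finally, for (4) and (5) I would evaluate~\eqref{Eqn:Integrability} with $\eta_m$ and a root vector $\xi\in\lie{s}_\gamma$, using $j\eta_m=-\xi_m$ to rewrite the terms $[j\eta_m,\,\cdot\,]$ as brackets against $\xi_m\in\lie{s}_{\alpha_m}$; comparing root-space components shows that $j\xi$ is again a root vector and identifies its root as the claimed partner, the two candidates coinciding exactly in the self-paired case $\lie{s}_{\alpha_k/2}$.

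The main obstacle is the core step: excluding every root outside the list of (2) while establishing strong orthogonality and the multiplicity-one statement of (3). This is the genuinely hard part of normal $j$--algebra theory. Unlike the Iwasawa situation there is no Killing form available to organize the roots, so one must substitute the positivity of $\langle\cdot,\cdot\rangle_\omega$ together with the integrability identity and run the combinatorial bookkeeping inductively on the rank. By contrast, assertions (1), (4) and (5) are comparatively mechanical consequences of the same two tools once the root pattern is in hand.
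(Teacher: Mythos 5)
There is no proof in the paper to compare against: Proposition~\ref{Prop:Finestructure} is quoted from the literature --- the text explicitly follows the exposition of~\cite{Is} and refers to~\cite{Pya} for complete proofs --- so the standard your attempt must meet is that of actually reproducing the (long) argument of Pyateskii--Shapiro. Your preliminary observations are correct and are indeed the standard entry points: $j$-invariance of the form together with $j^2=-\id$ gives skew-adjointness of $j$, and the computation $\langle j\eta,\eta'\rangle_\omega=-\omega\bigl([\eta,\eta']\bigr)=0$ for $\eta,\eta'\in\lie{a}$ does show $j\lie{a}\subset\lie{n}$. Beyond this point, however, the proposal is a plan rather than a proof. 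The entire content of (2) and (3) --- that every root belongs to the pattern $\alpha_k$, $\tfrac{1}{2}\alpha_k$, $\tfrac{1}{2}(\alpha_l\pm\alpha_k)$, that the $\alpha_k$ are linearly independent, and that $\dim\lie{s}_{\alpha_k}=1$ --- is precisely the part you defer with phrases like ``I would obtain this by combining positivity \dots with the grading \dots and~\eqref{Eqn:Integrability}'' and ``run the combinatorial bookkeeping inductively.'' You say yourself that this is ``the genuinely hard part of normal $j$-algebra theory''; it occupies a substantial portion of~\cite{Pya}, and no actual argument for it appears in your text. A proof that postpones exactly the hard core is a gap, not a proof.

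There is also a more local flaw: your derivation of (1) does not work as stated. Orthogonality of distinct root spaces would follow from distinctness of $\ad(\lie{a})$-eigenvalues only if each operator $\ad(\eta)$, $\eta\in\lie{a}$, were self-adjoint with respect to $\langle\cdot,\cdot\rangle_\omega$. That is not automatic: the form $\langle\xi,\xi'\rangle_\omega=\omega\bigl([j\xi,\xi']\bigr)$ involves $j$, and $j$ does not commute with $\ad(\eta)$ --- indeed assertion (4) shows that $j$ permutes root spaces nontrivially. (Note also that $\ad(\eta)$ cannot be skew-adjoint either, since its eigenvalues are real and it is not nilpotent.) Establishing the required compatibility between $\ad(\lie{a})$, $j$ and the form is itself part of the work in~\cite{Pya}, so (1) cannot be dismissed as falling out ``along the way''; as written, your argument for it is close to circular.
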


Finally, we set $\delta:=\eta_1+\dotsb+\eta_r$ and write $\lie{s}_\lambda$ for
the eigenspace of $\ad(\delta)$ with eigenvalue $\lambda\in\mbb{R}$. Then we
obtain the grading
\begin{equation*}
\lie{s}=\lie{s}_{-1}\oplus\lie{s}_{-1/2}\oplus\lie{s}_0
\end{equation*}
of $\lie{s}$ where
\begin{equation}
\begin{split}
\lie{s}_{-1}&=\bigoplus_{k=1}^r\lie{s}_{\alpha_k}\oplus\bigoplus_{1\leq k<l
\leq r}\lie{s}_{(\alpha_l+\alpha_k)/2},\\
\lie{s}_{-1/2}&=\bigoplus_{k=1}^r\lie{s}_{\alpha_k/2},\\
\lie{s}_0&=\lie{a}\oplus\bigoplus_{1\leq k<l\leq r}
\lie{s}_{(\alpha_l-\alpha_k)/2}
\end{split}
\end{equation}
hold.

Next we explain how the domain $D$ can be recovered from $(\lie{s},j)$. Let $S$
be the simply-connected Lie group with Lie algebra $\lie{s}$ and let $S_0$ be
the analytic subgroup whose Lie algebra is given by $\lie{s}_0$. We define
$\xi:=\xi_1+\dotsb+\xi_r$ and $\Omega:=\Ad(S_0)\xi$. One can show that $\Omega$
is a regular cone in $\lie{s}_{-1}$.

Since $\lie{s}_{-1/2}$ is invariant under $j$, we may consider
$(\lie{s}_{-1/2},j)$ as a complex vector space. Then the map $\Phi\colon
\lie{s}_{-1/2}\times\lie{s}_{-1/2}\to\lie{s}_{-1}^\mbb{C}$,
\begin{equation*}
\Phi(\xi,\xi'):=\frac{1}{4}\bigl([j\xi,\xi']+i[\xi,\xi']\bigr)
\end{equation*}
is an $\Omega$--Hermitian form on $\lie{s}_{-1/2}$. Hence, we obtain the
associated Siegel domain
\begin{equation*}
D_\lie{s}:=\bigl\{(\xi,\xi')\in\lie{s}_{-1}^\mbb{C}\times\lie{s}_{-1/2};\
\im(\xi)-\Phi(\xi',\xi')\in\Omega\bigr\}.
\end{equation*}

Let $S_{-}$ be the analytic subgroup of $S$ corresponding to $\lie{s}_{-1}\oplus
\lie{s}_{-1/2}$. Then the group $S=S_{-}\rtimes S_0$ acts by affine maps on
$D_\lie{s}$ via
\begin{equation}\label{Eqn:Action}
\bigl(\exp(\xi+\xi'),s\bigr)\cdot(z,w)\\:=\Bigl(\Ad(s)z+\xi+2i
\Phi\bigl(\Ad(s)w,\xi'\bigr)+i \Phi(\xi',\xi'),\Ad(s)w+\xi'\Bigr),
\end{equation}
where $\xi\in\lie{s}_{-1}$, $\xi'\in\lie{s}_{-1/2}$, $s\in S_0$ and $(z,w)\in
\lie{s}_{-1}^\mbb{C}\times\lie{s}_{-1/2}$ hold. One can show that this action is
simply transitive on $D_\lie{s}$ which implies that $D_\lie{s}$ is
biholomorphically equivalent to a bounded homogeneous domain.

\begin{theorem}
The construction described above yields a one-to-one correspondence between
equivalence classes of homogeneous bounded domains and isomorphism classes of
normal $j$--algebras.
\end{theorem}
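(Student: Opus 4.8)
The plan is to exhibit the two constructions already described---the passage $D\mapsto(\lie{s},j)$ obtained by transferring the complex structure and Bergman metric, and the passage $(\lie{s},j)\mapsto D_\lie{s}$ producing the Siegel domain---as mutually inverse maps between the two sets of classes. For each direction two things must be checked: that the construction respects the relevant equivalence relation, so that it descends to classes, and that composing the two constructions in either order returns the starting object up to equivalence. The entire geometric content is carried by the simply transitive action of the split solvable group $S$, which lets one identify every homogeneous domain with $S$ equipped with a left-invariant complex structure and a left-invariant K\"ahler metric.

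For the direction $D\mapsto(\lie{s},j)$, I would fix a base point $z_0\in D$ and, by Theorem~\ref{Thm:Vinberg}, a maximal split solvable subgroup $S\subset G$ acting simply transitively on $D$. The orbit map $\Theta\colon S\to D$, $s\mapsto s\cdot z_0$, is then a diffeomorphism intertwining left translation on $S$ with the $S$-action on $D$, so that $\Theta\circ L_{s_0}=\varphi_{s_0}\circ\Theta$. Since each $\varphi_{s_0}$ is a biholomorphism and a Bergman isometry, the pulled-back complex structure $\Theta^*J$ and the pulled-back Bergman metric are left-invariant, hence determined by a pair $(j,\langle\cdot,\cdot\rangle)$ on $\lie{s}$. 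Integrability of $J$ translates, via the vanishing of the Nijenhuis tensor of a left-invariant almost complex structure, into exactly condition~\eqref{Eqn:Integrability}. To see independence of the choices I would use that any two base points are exchanged by an element of $G$ and that, by Theorem~\ref{Thm:Vinberg}, any two maximal split solvable subgroups are conjugate by an inner automorphism; both operations induce isomorphisms of the resulting normal $j$-algebras. A biholomorphism $D\cong\tilde D$ conjugates the automorphism groups and carries one simply transitive $S$ onto another, so biholomorphic domains yield isomorphic $j$-algebras.

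For the direction $(\lie{s},j)\mapsto D_\lie{s}$, I would first record that $\Omega=\Ad(S_0)\xi$ is a regular cone and that $\Phi$ is $\Omega$-Hermitian, so that $D_\lie{s}$ is a genuine Siegel domain; by Proposition~\ref{Prop:SiegelDomain} it is biholomorphic to a bounded domain, and formula~\eqref{Eqn:Action} exhibits a simply transitive $S$-action, whence $D_\lie{s}$ is homogeneous. An isomorphism of normal $j$-algebras respects the grading, the cone $\Omega$ and the form $\Phi$, and therefore induces an $S$-equivariant biholomorphism of the associated Siegel domains, so this construction descends to classes as well. To verify that the maps are inverse, I would argue as follows. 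Starting from $D$, the domain $D_\lie{s}$ built from $(\lie{s},j)$ carries a simply transitive $S$-action whose orbit map, compared with $\Theta$, is $S$-equivariant and holomorphic because both complex structures pull back to the same left-invariant $j$; this yields a biholomorphism $D\cong D_\lie{s}$. Conversely, extracting the $j$-algebra from $D_\lie{s}$ and comparing the left-invariant complex structure and metric with the original data returns $(\lie{s},j)$, since action~\eqref{Eqn:Action} was set up precisely so that the identification $S\cong D_\lie{s}$ transports $j$ and the admissible form back to their starting values.

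The main obstacle is the analytic input in the direction $D\mapsto(\lie{s},j)$: while the transferred Bergman metric is automatically positive definite and $j$-invariant, one must still produce the linear form $\omega\in\lie{s}^*$ with $\langle\xi,\xi'\rangle=\omega\bigl([j\xi,\xi']\bigr)$. Using $g(\xi,\xi')=-\omega_g(j\xi,\xi')$ and the Chevalley--Eilenberg formula $d\beta(\xi,\xi')=-\beta([\xi,\xi'])$ for left-invariant forms, this amounts to showing that the left-invariant, closed K\"ahler form $\omega_g$ is exact as a left-invariant form, its primitive $1$-form then serving as $\omega$. This is exactly where Pyateskii-Shapiro's structure theory enters, the Koszul form of the Bergman metric providing the primitive canonically; see~\cite{Pya}. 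The remaining verifications---integrability, regularity of $\Omega$, the Hermitian property of $\Phi$, and the compatibility of the orbit maps---are structural consequences of the root space decomposition in Proposition~\ref{Prop:Finestructure} and of the explicit action~\eqref{Eqn:Action}, and I would treat them as routine once the admissibility of $\omega$ is established.
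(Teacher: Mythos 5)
The paper gives no proof of this theorem at all: it is stated as imported background, with the construction attributed to Pyateskii-Shapiro and complete proofs explicitly delegated to~\cite{Pya} (the exposition following~\cite{Is}). So there is no internal argument to compare yours against; the only meaningful comparison is with the classical proof, and your outline does follow that route. Vinberg's theorem (Theorem~\ref{Thm:Vinberg}) supplies the simply transitive split solvable group, the orbit map transports the complex structure and Bergman metric to left-invariant data, vanishing of the Nijenhuis tensor of the left-invariant almost complex structure is exactly condition~\eqref{Eqn:Integrability}, and in the other direction the Siegel domain $D_\lie{s}$ with the affine action~\eqref{Eqn:Action} is bounded and homogeneous by Proposition~\ref{Prop:SiegelDomain}. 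Your identification of the existence of the admissible form $\omega$ as the main analytic input, settled by the Koszul primitive of the invariant K\"ahler form, is also where the classical proof puts the weight.

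Two steps are softer than you present them. First, the claim that an isomorphism of normal $j$-algebras ``respects the grading, the cone $\Omega$ and the form $\Phi$'' is not literally true: by the paper's definition the datum is the pair $(\lie{s},j)$, with $\omega$ only required to exist, and $\lie{a}$, the roots $\alpha_k$, hence $\Omega$ and $\Phi$, are all manufactured from a chosen $\omega$. An abstract isomorphism of pairs need not match two such choices, so one must first argue, via conjugacy of the resulting Abelian subalgebras under inner automorphisms, that the constructed data correspond up to an automorphism; only then does the Siegel-domain construction descend to isomorphism classes. Second, the crux of ``mutually inverse'' is your assertion that the identification $S\cong D_\lie{s}$ transports the left-invariant extension of $j$ to the complex structure of the Siegel domain. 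This is not ``set up precisely so that'' but a genuine computation with the root relations of Proposition~\ref{Prop:Finestructure} (for instance $j\xi_k=\eta_k$ must combine with $[\eta_k,\xi_l]=-\delta_{kl}\xi_l$ correctly, signs included); it is precisely Lemma~1.2 of~\cite{Is}, which the paper itself invokes later, in the geometric-realization subsection, for exactly this point. With those two repairs your outline is a faithful sketch of the standard proof.
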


Finally we note the following corollary of~\eqref{Eqn:Action}.

\begin{lemma}\label{Lem:TransitiveAction}
The group $S^\mbb{C}$ acts transitively on $\lie{s}_{-1}^\mbb{C}\times
\lie{s}_{-1/2}$.
\end{lemma}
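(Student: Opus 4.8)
The plan is to read off transitivity directly from the affine action formula~\eqref{Eqn:Action}, after extending it holomorphically to $S^\mbb{C}$. The essential observation will be that complexifying the abelian translation subgroup $\exp(\lie{s}_{-1})$ supplies precisely the imaginary translations in the $\lie{s}_{-1}^\mbb{C}$--direction that are missing from the real action, whereas the translations in the $\lie{s}_{-1/2}$--direction are already available over $\mbb{R}$.

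First I would set up the complexified action. By~\eqref{Eqn:Action} each element of $S$ acts by a complex--affine transformation of $\lie{s}_{-1}^\mbb{C}\times\lie{s}_{-1/2}$, so the action defines a homomorphism $S\to\mathrm{Aff}(\lie{s}_{-1}^\mbb{C}\times\lie{s}_{-1/2})$ into a complex Lie group. By the universal property of the universal complexification this extends to a holomorphic homomorphism $S^\mbb{C}\to\mathrm{Aff}(\lie{s}_{-1}^\mbb{C}\times\lie{s}_{-1/2})$, which is the $S^\mbb{C}$--action in question.

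Next I would isolate two subgroups. Since $\ad(\delta)$ is a derivation, the grading $\lie{s}=\lie{s}_{-1}\oplus\lie{s}_{-1/2}\oplus\lie{s}_0$ satisfies $[\lie{s}_\lambda,\lie{s}_\mu]\subset\lie{s}_{\lambda+\mu}$; in particular $[\lie{s}_{-1},\lie{s}_{-1}]\subset\lie{s}_{-2}=\{0\}$ and $[\lie{s}_{-1/2},\lie{s}_{-1}]\subset\lie{s}_{-3/2}=\{0\}$, so $\lie{s}_{-1}$ is an abelian ideal. Hence $S_{-1}:=\exp(\lie{s}_{-1})$ is abelian and, by~\eqref{Eqn:Action} with $s=e$ and $\xi'=0$, acts by the real translations $(z,w)\mapsto(z+\xi,w)$, $\xi\in\lie{s}_{-1}$. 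Its image in $S^\mbb{C}$ is the abelian subgroup $(S_{-1})^\mbb{C}\cong\lie{s}_{-1}^\mbb{C}$, which acts by \emph{all} complex translations $(z,w)\mapsto(z+\xi,w)$ with $\xi\in\lie{s}_{-1}^\mbb{C}$. On the other hand, setting $s=e$ and $\xi=0$ in~\eqref{Eqn:Action} shows that $\exp(\lie{s}_{-1/2})$ acts by $(z,w)\mapsto\bigl(z+2i\Phi(w,\xi')+i\Phi(\xi',\xi'),\,w+\xi'\bigr)$, so it already moves the $w$--coordinate to any prescribed value, since $\lie{s}_{-1/2}$ is exactly the space in which $w$ varies.

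Finally I would combine these. Given $(z_0,w_0)$ and $(z_1,w_1)$, apply $\exp(w_1-w_0)\in\exp(\lie{s}_{-1/2})$ to send $(z_0,w_0)$ to a point of the form $(z',w_1)$, and then apply the complex translation $\exp(z_1-z')\in(S_{-1})^\mbb{C}$ to reach $(z_1,w_1)$ without disturbing the second coordinate. Thus already the subgroup $(S_{-1})^\mbb{C}\cdot\exp(\lie{s}_{-1/2})$ of $S^\mbb{C}$ is transitive, which proves the lemma. The one point requiring care---and the only genuine obstacle---is the passage to $S^\mbb{C}$: because $\Phi$ is merely Hermitian, a naive replacement of all real parameters in~\eqref{Eqn:Action} by complex ones need not be holomorphic, so I deliberately complexify only the abelian translation factor $\exp(\lie{s}_{-1})$, where the extension is unambiguous, and take the $w$--translations from the real group, where no complexification is needed.
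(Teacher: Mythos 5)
Your proof is correct and takes essentially the same route as the paper, which states the lemma without proof as an immediate corollary of the action formula~\eqref{Eqn:Action}: one extends the affine $S$--action holomorphically to $S^\mbb{C}$ via the universal property and reads off transitivity from the formula. Your extra care about which parameters may be complexified (only the $\lie{s}_{-1}$--translations, since $\Phi$ is conjugate--linear in the group parameter $\xi'$) is a valid clarification of a point the paper leaves implicit, but it does not change the argument.
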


\section{Cyclic groups acting on bounded homogeneous
domains}\label{Sec:Reduction}

We carry out the first step towards a proof of Steinness of $X=D/\Gamma$ by
showing that it is enough to assume that the cyclic group $\Gamma$ lies in a
maximal split solvable subgroup of $G$.

\subsection{Reduction to automorphisms in $G$}

Let $D\subset\mbb{C}^n$ be a bounded homogeneous domain and let $\varphi\in
\Aut_\mathcal{O}(D)$ be such that the subgroup $\Gamma:=\langle\varphi\rangle:=
\{\varphi^m;\ m\in\mbb{Z}\}$ is discrete in $\Aut_\mathcal{O}(D)$. Since every
discrete subgroup is also closed, this implies that $\Gamma$ acts properly on
$D$, and hence that $X:=D/\Gamma$ is a complex space.

Since the group $\Gamma$ is cyclic, it is either finite or isomorphic to
$\mbb{Z}$. In the first case it is classical that Steinness of $D$ implies
Steinness of $X$ (see for example~\cite{GrRe2}). Therefore we will assume in
the following that $\Gamma$ is isomorphic to $\mbb{Z}$. Since every proper
$\mbb{Z}$--action is automatically free, the quotient $X$ is a complex manifold
in this case.

Recall that the group $\Aut_\mathcal{O}(D)$ has only finitely many connected
components which implies that $\Gamma^0:=\Gamma\cap G$ is a normal subgroup of
finite index in $\Gamma$. Since $D/\Gamma$ is Stein if and only if $D/\Gamma^0$
is so, we may assume without loss of generality that $\varphi$ is contained in
$G=\Aut_\mathcal{O}(D)^0$.

\subsection{Jordan-Chevalley decomposition}

In this subsection we will explain how Kaneyuki's Theorem~\ref{Thm:Kaneyuki}
implies the existence of the Jordan-Chevalley decomposition in $G$.

Let us quickly review the Jordan-Chevalley decomposition. If $H$ is a
real-algebraic group, then every element $h\in H$ can be uniquely written as
$h=h_{\sf s}h_{\sf u}=h_{\sf u}h_{\sf s}$ where $h_{\sf s}\in H$ is semi-simple
and $h_{\sf u}\in H$ is unipotent. Following~\cite{Kos} we decompose the
semi-simple part $h_{\sf s}$ further as $h_{\sf s}=h_{\sf e}h_{\sf h}$ where the
eigenvalues of $h_{\sf e}\in H$ lie in the unit circle in $\mbb{C}$ and where
$h_{\sf h}\in H$ has only positive real eigenvalues. We call $h_{\sf e}$ the
elliptic and $h_{\sf h}$ the hyperbolic part of $h$. Note that the elements
$h_{\sf e}$, $h_{\sf h}$ and $h_{\sf u}$ commute.

\begin{lemma}\label{Lem:JordanDecomp}
Let $H\subset{\rm{GL}}(N,\mbb{R})$ be a real-algebraic group and let $h\in H^0$
be given. If $h=h_{\sf e}h_{\sf h}h_{\sf u}$ is the multiplicative Jordan
decomposition of $h$ in $H$, then we have $h_{\sf e},h_{\sf h},h_{\sf u}\in
H^0$.
\end{lemma}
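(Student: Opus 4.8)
The plan is to treat the three factors separately, and the crucial observation is that the hyperbolic and unipotent parts lie in $H^0$ for \emph{every} element of $H$, irrespective of which component it sits in; the hypothesis $h\in H^0$ is only needed to pull in the elliptic part. Indeed, once I know that $h_{\sf h},h_{\sf u}\in H^0$, the fact that the three factors commute gives $h_{\sf e}=h\,h_{\sf u}^{-1}h_{\sf h}^{-1}$, and since $h\in H^0$ by assumption this immediately yields $h_{\sf e}\in H^0$. So the real content is to show $h_{\sf h}\in H^0$ and $h_{\sf u}\in H^0$, and in each case I would exhibit a connected one-parameter subgroup of $H$ joining $\id$ to the factor in question.

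For the unipotent part I write $h_{\sf u}=\exp(N)$ with $N:=\log h_{\sf u}$ nilpotent. Since $N$ is nilpotent this logarithm is a finite sum and $s\mapsto\exp(sN)$ is a \emph{polynomial} map from the affine line into ${\rm{GL}}(N,\mbb{R})$ whose image is a connected algebraic subgroup isomorphic to $\mbb{G}_a$ (or trivial, if $N=0$). Because $h_{\sf u}^n=\exp(nN)\in H$ for all $n\in\mbb{Z}$ and $H$ is Zariski closed, the Zariski closure of $\{\exp(nN);\ n\in\mbb{Z}\}$ is contained in $H$; but this set is an infinite subset of the irreducible curve $\{\exp(sN)\}$ and hence Zariski dense in it, so the whole one-parameter subgroup lies in $H$. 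Its real points form a connected subset of $H$ containing $\id$ and $h_{\sf u}$, whence $h_{\sf u}\in H^0$.

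The hyperbolic part is where the argument really has to be made, because there the real powers $s\mapsto h_{\sf h}^s$ do \emph{not} form a polynomial curve and the Zariski-density argument above does not apply directly. Since $h_{\sf h}$ is semi-simple with positive real eigenvalues it is diagonalizable over $\mbb{R}$, say $h_{\sf h}=\mathrm{diag}(r_1,\dotsc,r_N)$ with $r_i>0$ in a suitable real basis, and I set $Y:=\log h_{\sf h}=\mathrm{diag}(\log r_1,\dotsc,\log r_N)$. The Zariski closure $Z$ of $\{h_{\sf h}^n;\ n\in\mbb{Z}\}$ is a diagonalizable subgroup of $H$, cut out among diagonal matrices by the multiplicative relations $\prod_i r_i^{m_i}=1$ for $(m_i)\in\mbb{Z}^N$. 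The key point is that, because every $r_i$ is \emph{positive}, each such relation is equivalent to the additive relation $\sum_i m_i\log r_i=0$, and this is precisely the condition that $Y$ lie in $\Lie(Z)$. Hence $\exp(sY)\in Z^0\subset H^0$ for all $s\in\mbb{R}$, and in particular $h_{\sf h}=\exp(Y)\in H^0$.

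Combining the three steps finishes the proof. I expect the passage from multiplicative to additive relations in the hyperbolic case — equivalently, the statement that $\log h_{\sf h}$ lies in the Lie algebra of the relevant torus, which is exactly where positivity of the eigenvalues enters — to be the main obstacle; by contrast the unipotent case is a soft Zariski-density statement and the elliptic case costs nothing.
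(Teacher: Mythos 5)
Your proof is correct, but it takes a genuinely different route from the paper's. The paper argues by continuity: the Jordan factors of $h$ are polynomials in $h$ and hence lie in $H$, they are asserted to depend continuously on $h$, and then a continuous path $t\mapsto h(t)$ in $H^0$ from $I_N$ to $h$ is chosen, so that the factor curves $t\mapsto h_{\mathsf e}(t),h_{\mathsf h}(t),h_{\mathsf u}(t)$ connect each factor to $I_N$ inside $H$. You instead argue algebraically, placing $h_{\mathsf u}$ and $h_{\mathsf h}$ on connected one-parameter subgroups of $H$: for $h_{\mathsf u}$ by Zariski density of the infinite set $\{h_{\mathsf u}^n;\ n\in\mathbb{Z}\}$ in the polynomial curve $\{\exp(sN);\ s\in\mathbb{R}\}$, and for $h_{\mathsf h}$ by the observation that positivity of the eigenvalues converts the multiplicative relations cutting out the Zariski closure $Z$ of $\{h_{\mathsf h}^n;\ n\in\mathbb{Z}\}$ into additive relations on logarithms, so that $\exp(sY)\in Z$ for all real $s$ (the description of $Z$ by character equations is the standard duality for diagonalizable groups, so this step is sound); the elliptic factor then comes by division, the only point where $h\in H^0$ is used. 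Your route buys two things: it proves the sharper statement that $h_{\mathsf h},h_{\mathsf u}\in H^0$ for \emph{every} $h\in H$, and it is more robust, since the continuity invoked by the paper actually fails in general --- the unipotent part of $\left(\begin{smallmatrix}1&1\\0&1+t\end{smallmatrix}\right)\in\mathrm{GL}(2,\mathbb{R})^0$ equals $I_2$ for $t\neq 0$ but not for $t=0$ --- so the paper's argument needs supplementing at precisely that step, while yours is self-contained. A small simplification: in the hyperbolic case you need not discuss $\Lie(Z)$ at all, since for every character $\chi_m$ with $\chi_m(h_{\mathsf h})=\prod_i r_i^{m_i}=1$ one computes directly $\chi_m\bigl(\exp(sY)\bigr)=\exp\bigl(s\sum_i m_i\log r_i\bigr)=1$ for all $s\in\mathbb{R}$, which already puts the whole one-parameter group inside $Z$.
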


\begin{proof}
Let $h=h_{\sf s}+h_{\sf n}$ be the additive Jordan decomposition in
$\mbb{R}^{N\times N}$. As is well known the matrices $h_{\sf s}$ and $h_{\sf n}$
can be expressed as polynomials in $h$. Furthermore, the multiplicative Jordan
decomposition of $h$ is then given by $h=h_{\sf s}h_{\sf u}$ with
$h_{\sf u}=I_N+h_{\sf s}^{-1}h_{\sf n}$. Since $H$ is real-algebraic, we have
$h_{\sf s},h_{\sf u}\in H$ and the matrices $h_{\sf s}$ and $h_{\sf u}$ depend
continuously on $h$. Moreover, the matrices $h_{\sf e}$ and $h_{\sf h}$ lie in
$H$ and depend continuously on $h$, too.

If $h\in H^0$ holds, we find a continuous curve $t\mapsto h(t)\in H^0$,
$t\in[0,1]$, with $h(0)=I_N$ and $h(1)=h$. Forming the multiplicative Jordan
decomposition $h(t)=h_{\sf e}(t)h_{\sf h}(t)h_{\sf u}(t)$ we obtain continuous
curves $t\mapsto h_{\sf e}(t)\in H$, $t\mapsto h_{\sf h}(t)\in H$ and $t\mapsto
h_{\sf u}(t)\in H$. Because of $h_{\sf e}(0)=h_{\sf h}(0)=h_{\sf u}(0)=I_N$ the
claim follows.
\end{proof}

Since Kaneyuki's Theorem asserts that there exists a faithful representation
$\rho\colon G\to{\rm{GL}}(N,\mbb{R})$ such that $\rho(G)=H^0$ for a
real-algebraic subgroup $H\subset{\rm{GL}}(N,\mbb{R})$, we obtain the following
notion of Jordan-Chevalley decomposition in $G$.

\begin{definition}
We say that an element $g\in G$ is elliptic, hyperbolic or unipotent if the
element $\rho(g)$ has this property.
\end{definition}

\begin{proposition}
Every element $g\in G$ may be uniquely written as $g=g_{\sf e}g_{\sf h}
g_{\sf u}$ where $g_{\sf e}$ is elliptic, $g_{\sf h}$ is hyperbolic and
$g_{\sf u}$ is unipotent and where these three elements commute with each other.
\end{proposition}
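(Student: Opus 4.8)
The plan is to transport the classical Jordan--Chevalley decomposition from the linear group $\rho(G)=H^0$ back to $G$ via the faithful representation $\rho$, using Lemma~\ref{Lem:JordanDecomp} to guarantee that all the pieces remain inside the identity component. Concretely, given $g\in G$, I would apply the multiplicative Jordan decomposition to $\rho(g)\in H^0$ inside $\mathrm{GL}(N,\mbb{R})$, writing $\rho(g)=\rho(g)_{\sf e}\,\rho(g)_{\sf h}\,\rho(g)_{\sf u}$ with the three factors elliptic, hyperbolic and unipotent respectively. Lemma~\ref{Lem:JordanDecomp} tells us that each of these factors already lies in $H^0=\rho(G)$. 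Since $\rho$ is a faithful representation, it is injective, so there exist unique elements $g_{\sf e},g_{\sf h},g_{\sf u}\in G$ with $\rho(g_{\sf e})=\rho(g)_{\sf e}$, and similarly for the other two. These are by definition the elliptic, hyperbolic and unipotent parts of $g$.

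Next I would verify that $g=g_{\sf e}g_{\sf h}g_{\sf u}$ holds in $G$. This follows because $\rho$ is a homomorphism: we have $\rho(g_{\sf e}g_{\sf h}g_{\sf u})=\rho(g_{\sf e})\rho(g_{\sf h})\rho(g_{\sf u})=\rho(g)_{\sf e}\rho(g)_{\sf h}\rho(g)_{\sf u}=\rho(g)$, and injectivity of $\rho$ gives the desired equality in $G$. The same argument transports commutativity: since $\rho(g)_{\sf e}$, $\rho(g)_{\sf h}$ and $\rho(g)_{\sf u}$ pairwise commute in $\mathrm{GL}(N,\mbb{R})$ (this is part of the classical decomposition recalled before the lemma), their preimages under the injective homomorphism $\rho$ must also pairwise commute in $G$.

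For uniqueness, suppose $g=g'_{\sf e}g'_{\sf h}g'_{\sf u}$ is a second such factorization with pairwise-commuting factors of the prescribed type. Applying $\rho$ yields a factorization $\rho(g)=\rho(g'_{\sf e})\rho(g'_{\sf h})\rho(g'_{\sf u})$ in $\mathrm{GL}(N,\mbb{R})$ into commuting elliptic, hyperbolic and unipotent parts. Here I would invoke the uniqueness of the Jordan--Chevalley decomposition at the linear level, together with the fact that the definition of elliptic, hyperbolic and unipotent elements of $G$ was made precisely so that $\rho$ carries these properties to the corresponding properties of the matrix $\rho(g')_\bullet$. The linear uniqueness then forces $\rho(g'_{\sf e})=\rho(g)_{\sf e}$, and likewise for the hyperbolic and unipotent parts; injectivity of $\rho$ yields $g'_{\sf e}=g_{\sf e}$, $g'_{\sf h}=g_{\sf h}$, $g'_{\sf u}=g_{\sf u}$.

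The only genuine subtlety, and the step I would treat most carefully, is confirming that the decomposition of $\rho(g)$ produced by the classical theory is exactly the one whose three factors lie in $H^0$ as furnished by Lemma~\ref{Lem:JordanDecomp}, so that these factors are legitimately of the form $\rho(\text{something in }G)$. Everything else is a formal consequence of $\rho$ being an injective homomorphism onto $H^0$: the existence, the product formula, commutativity and uniqueness all descend from the linear statement through injectivity. Thus the proposition is essentially a faithful translation of the linear Jordan--Chevalley decomposition through the isomorphism $\rho\colon G\xrightarrow{\sim}H^0$, with Lemma~\ref{Lem:JordanDecomp} supplying the one nontrivial ingredient that the factors stay in the identity component.
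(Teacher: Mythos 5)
Your proof is correct and is precisely the argument the paper intends: the paper states this proposition without an explicit proof, treating it as an immediate consequence of Kaneyuki's theorem, the definition of elliptic, hyperbolic and unipotent elements of $G$ via $\rho$, and Lemma~\ref{Lem:JordanDecomp}. Your write-up simply makes explicit the transport of the classical multiplicative Jordan--Chevalley decomposition (existence, product formula, commutativity and uniqueness) through the injective homomorphism $\rho\colon G\to H^0$, which is exactly the paper's implicit reasoning.
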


The following proposition generalizes Propositions~2.3 and~2.5 of~\cite{Kos}.

\begin{proposition}\label{Prop:Conjugacy}
Every elliptic element of $G$ is conjugate to an element in the maximal compact
subgroup $K$, while every element $g\in G$ with $g_{\sf e}=e$ is conjugate to an
element in the maximal split solvable group $S$.
\end{proposition}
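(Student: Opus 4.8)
The plan is to prove the two conjugacy statements separately, exploiting Vinberg's decomposition $G\cong K\times S$ (Theorem~\ref{Thm:Vinberg}) together with the properness of the $G$--action on $D$ and the Jordan-Chevalley structure already established.

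\emph{Elliptic elements.} Let $g\in G$ be elliptic, so that $\rho(g)$ has all eigenvalues on the unit circle and is semi-simple. The key point is that $g$ generates a relatively compact subgroup of $G$: since $\rho$ is faithful and $\rho(g)$ lies in a compact subgroup of ${\rm{GL}}(N,\mbb{R})$ (being semi-simple with eigenvalues of modulus one), the closure $T:=\overline{\langle g\rangle}$ is a compact subgroup of $G$. By the conjugacy of maximal compact subgroups in $G$ (which follows from the $K\times S$ decomposition, as every compact subgroup is contained in a maximal one and all maximal compact subgroups are conjugate), there exists $x\in G$ with $xTx^{-1}\subset K$, and in particular $xgx^{-1}\in K$. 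First I would verify carefully that $\rho(g)$ lying in a compact subgroup of the ambient ${\rm{GL}}(N,\mbb{R})$ really forces $\overline{\langle g\rangle}$ to be compact in $G$; this uses that $\rho$ is a homeomorphism onto its image $H^0$, which is closed in $H$, so compactness transfers back.

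\emph{Elements with $g_{\sf e}=e$.} Now suppose $g_{\sf e}=e$, so $g=g_{\sf h}g_{\sf u}$ is a commuting product of a hyperbolic and a unipotent element. Here the strategy is to show that $g$ is contained in \emph{some} maximal split solvable subgroup of $G$, and then invoke the conjugacy clause of Theorem~\ref{Thm:Vinberg} to move it into $S$. The natural candidate is to consider the smallest algebraic subgroup of $H$ containing $\rho(g)$; since $\rho(g)=\rho(g_{\sf h})\rho(g_{\sf u})$ has only positive real eigenvalues on its semi-simple part, the Zariski closure $A$ of $\langle\rho(g)\rangle$ is a commutative algebraic group with no elliptic part, and its identity component $A^0$ is split solvable. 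One then embeds $A^0\cap H^0$ into a maximal split solvable subgroup $S'$ of $H^0$, pulls back through $\rho$ to obtain a maximal split solvable subgroup of $G$ containing $g$, and finally applies the conjugacy statement of Vinberg's theorem.

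\emph{Main obstacle.} I expect the hard part to be the second assertion, specifically producing a maximal split solvable subgroup of $G$ that actually contains the given $g$. Membership of $g$ in a split solvable subgroup is not automatic from $g_{\sf e}=e$ alone: one must check that the relevant algebraic hull is genuinely split solvable (no compact torus part), which is where the hypothesis that the hyperbolic part has \emph{positive} real eigenvalues and the absence of an elliptic factor are essential. The delicate bookkeeping lies in passing between the algebraic group $H$, its identity component $H^0=\rho(G)$, and $G$ itself, ensuring that the split solvable subgroup one constructs in $H^0$ corresponds under $\rho^{-1}$ to a genuine maximal split solvable subgroup of $G$ in the sense of Theorem~\ref{Thm:Vinberg}; only then does the inner-automorphism conjugacy clause apply to deliver $xgx^{-1}\in S$.
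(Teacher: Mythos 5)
Your proposal is correct and takes essentially the same route as the paper, whose proof consists precisely of the three facts you elaborate: elliptic elements generate compact groups, elements with trivial elliptic part generate split solvable groups, and maximal compact respectively maximal split solvable subgroups are conjugate (Theorem~\ref{Thm:Vinberg}). Your Zariski-closure analysis (positivity of the hyperbolic eigenvalues forcing the hull to be split, with $\rho(g)$ landing in its identity component) just fills in details the paper leaves implicit.
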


\begin{proof}
The claim follows from the facts that elliptic elements generate compact
groups, that elements with trivial elliptic part generate split solvable groups
and that maximal compact respectively split solvable groups are conjugate.
\end{proof}

\subsection{Reduction to automorphisms with trivial elliptic part}

In this subsection we will show that it is enough to consider automorphisms
$\varphi\in G$ whose elliptic part vanishes.

Let $\varphi=\varphi_{\sf e}\varphi_{\sf h}\varphi_{\sf u}$ be the
Jordan-Chevalley decomposition of $\varphi$ and set
$\varphi':=\varphi_{\sf h}\varphi_{\sf u}$ as well as
$\Gamma':=\langle\varphi'\rangle$. By Proposition~\ref{Prop:Conjugacy} we may
assume that the group $\Gamma'$ is contained in the split solvable subgroup $S$
of $G$. This implies in particular that $\Gamma'$ is a closed subgroup of $G$.
Thus we may consider the complex manifold $X':=D/\Gamma'$. We will show that $X$
is Stein if and only if $X'$ is Stein.

The closure $T$ of the group generated by $\varphi_e$ is a compact torus in
$G$. Since $\varphi'$ and $\varphi_e$ commute, we conclude that $\Gamma$ and
$\Gamma'$ lie in the centralizer $\mathcal{Z}_G(T)$. Consequently, the
sets $T\Gamma$ and $T\Gamma'$ are subgroups of $G$.

\begin{lemma}
We have $T\Gamma=T\Gamma'$, and the action of $T\Gamma'$ on $D$ is proper.
Hence, $Y:=D/(T\Gamma)=D/(T\Gamma')$ is a Hausdorff topological space.
\end{lemma}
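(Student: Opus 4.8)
The plan is to dispatch the identity $T\Gamma=T\Gamma'$ by a formal computation and then to obtain properness by showing that $T\Gamma'$ is a \emph{closed} subgroup of $G$, after which the Hausdorff conclusion is automatic from properness of the ambient $G$--action. I expect the closedness of $T\Gamma'$ to be the only genuinely substantial point.

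First I would verify $T\Gamma=T\Gamma'$. Recall that $T\Gamma'$ is a group (its elements are exactly the products $t(\varphi')^k$ with $t\in T$, $k\in\mbb{Z}$, since $\Gamma'\subseteq\mathcal{Z}_G(T)$). Because $\varphi_{\sf e}$ and $\varphi'=\varphi_{\sf h}\varphi_{\sf u}$ commute, we have $\varphi^k=\varphi_{\sf e}^k(\varphi')^k$ for all $k\in\mbb{Z}$, and since $\varphi_{\sf e}\in T$ it follows that $t\varphi^k=(t\varphi_{\sf e}^k)(\varphi')^k\in T\Gamma'$ for every $t\in T$; hence $T\Gamma\subseteq T\Gamma'$. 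The reverse inclusion follows in the same way from $(\varphi')^k=\varphi_{\sf e}^{-k}\varphi^k$, giving $T\Gamma=T\Gamma'$.

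The heart of the matter is that $T\Gamma'$ is closed in $G$, and here I would argue by sequences, using that $\Gamma'$ is discrete and closed (being contained in $S$) together with the compactness of $T$. Let $g\in\overline{T\Gamma'}$ and choose $t_n\in T$ and $k_n\in\mbb{Z}$ with $t_n(\varphi')^{k_n}\to g$. By compactness of $T$ I may pass to a subsequence along which $t_n\to t\in T$; continuity of multiplication then gives $(\varphi')^{k_n}=t_n^{-1}\bigl(t_n(\varphi')^{k_n}\bigr)\to t^{-1}g$. Since $\Gamma'$ is closed, the limit $t^{-1}g$ lies in $\Gamma'$, and since $\Gamma'$ is discrete a sequence in it that converges (to a point of $\Gamma'$) is eventually constant; thus $t^{-1}g=(\varphi')^{k_0}$ for some $k_0\in\mbb{Z}$, whence $g=t(\varphi')^{k_0}\in T\Gamma'$. (If $\varphi'=e$ this is vacuous and $T\Gamma'=T$ is already compact, so the conclusion holds a fortiori.)

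Finally, with $T\Gamma'$ closed the remaining assertions are formal. By H.~Cartan's theorem $G$ acts properly on $D$, i.e.\ the map $G\times D\to D\times D$, $(g,x)\mapsto(gx,x)$, is proper; restricting this proper map to the closed subset $T\Gamma'\times D$ shows that $T\Gamma'$ acts properly on $D$. A proper action has Hausdorff orbit space, so $Y=D/(T\Gamma)=D/(T\Gamma')$ is Hausdorff. The one step that carries real content is the closedness of $T\Gamma'$; once the discreteness and closedness of $\Gamma'$ are combined with the compactness of $T$ as above, everything else is bookkeeping.
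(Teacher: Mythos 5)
Your proof is correct, but it reaches the properness statement by a genuinely different route than the paper. The paper never shows that $T\Gamma'$ is closed in $G$: instead it observes that the compact group $T$ acts properly on $X'=D/\Gamma'$, combines this with the proper $\Gamma'$--action on $D$ to conclude that the product group $T\times\Gamma'$ acts properly on $D$, and then uses the fact that $\varphi'$ has trivial elliptic part to identify $T\Gamma'$ with $T\times\Gamma'$ (the point being that $T\cap\Gamma'$ consists of elements which are simultaneously elliptic and of trivial elliptic part, hence $T\cap\Gamma'=\{e\}$). You instead prove directly that $T\Gamma'$ is closed in $G$ --- your sequence argument is the standard proof of the general fact that the product of a compact set and a closed set in a topological group is closed --- and then get properness by restricting Cartan's proper $G$--action to the closed subgroup $T\Gamma'$, and Hausdorffness of $Y$ formally from properness. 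Your route is more elementary and slightly more general: it uses only that $T$ is compact, that $\Gamma'$ is closed in $G$ (which the paper has established via $\Gamma'\subset S$), and that $T\Gamma'$ is a subgroup; the Jordan--Chevalley input ($\varphi'$ having trivial elliptic part) enters only indirectly through $\Gamma'\subset S$, and the eventual-constancy remark from discreteness is not even needed once closedness gives $t^{-1}g\in\Gamma'$. What the paper's route buys in exchange is the explicit splitting $T\Gamma'\cong T\times\Gamma'$, which is conceptually useful for the subsequent diagram, where one wants $\Gamma$ and $\Gamma'$ normal in $T\Gamma=T\Gamma'$ and a well-defined proper $T$--action on $X$ and $X'$. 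Both arguments are complete and compatible with the rest of the paper.
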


\begin{proof}
The identity $T\Gamma=T\Gamma'$ is elementary to check.

Since $T$ is compact, the $T$--action on $X'$ is proper, hence the product
group $T\times\Gamma'$ acts properly on $D$. Since the element $\varphi'$ has
by definition trivial elliptic part, the group $T\Gamma'$ is isomorphic to $T
\times\Gamma'$.
\end{proof}

Since the groups $\Gamma$ and $\Gamma'$ are normal in $T\Gamma=T\Gamma'$, the
torus $T$ acts properly on $X$ and $X'$ and we obtain the following commutative
diagram:
\begin{equation*}
\xymatrix{
 & D\ar[dl]_p\ar[dd]^{\pi}\ar[dr]^{p'} & \\
X\ar[dr]_q & & X'\ar[dl]^{q'}\\
 & Y. &
}
\end{equation*}

The following proposition is the main result of this subsection.

\begin{proposition}\label{Prop:Reduction}
The manifold $X$ is Stein if and only if $X'$ is Stein. Hence, we can restrict
our attention to automorphisms with trivial elliptic part.
\end{proposition}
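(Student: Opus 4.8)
The plan is to prove the two implications symmetrically by transporting a strictly plurisubharmonic exhaustion from one quotient to the other by averaging over the compact torus $T$. The key structural input, already assembled above, is that since $T\Gamma=T\Gamma'$ and $T$ centralizes both $\Gamma$ and $\Gamma'$, the torus $T$ acts by holomorphic automorphisms on both $X$ and $X'$ with common orbit space $Y$, and the orbit maps $q\colon X\to Y$ and $q'\colon X'\to Y$ are maps of a \emph{compact} group action, hence proper. I would use this properness to move an exhaustion from $X$ down to $Y$ and then back up to $X'$.

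Assume first that $X$ is Stein, so that $X$ carries a smooth strictly plurisubharmonic exhaustion function $u\colon X\to\mbb{R}$. Lifting it along $p\colon D\to X$ gives a $\Gamma$--invariant, smooth, strictly plurisubharmonic function $\hat u$ on $D$, and I would average it over $T$ with respect to normalized Haar measure $dt$:
\[
\tilde u(z):=\int_T\hat u(t\cdot z)\,dt.
\]
Since each $z\mapsto\hat u(t\cdot z)$ is the pullback of $\hat u$ by the biholomorphism $z\mapsto t\cdot z$, every integrand is smooth and strictly plurisubharmonic; as the complex Hessian of $\tilde u$ is the Haar--average of positive definite Hessians, $\tilde u$ is again smooth and strictly plurisubharmonic. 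By construction $\tilde u$ is $T$--invariant and $\Gamma$--invariant, hence invariant under $T\Gamma=T\Gamma'$, in particular $\Gamma'$--invariant, so it descends to a smooth strictly plurisubharmonic function $u'$ on $X'$ and to a continuous function $\bar u$ on $Y$ with $u'=\bar u\circ q'$.

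It remains to check that $u'$ is an exhaustion, and this is the step where compactness of $T$ is used decisively; I expect it to be the only real obstacle, since plurisubharmonicity is preserved automatically. The descent $u_1=\bar u\circ q$ of $\tilde u$ to $X$ equals the average $\int_T u(t\cdot x)\,dt$ of the original exhaustion. After normalizing $u\geq 0$, the elementary bound $\min_{t\in T}u(t\cdot x)\leq u_1(x)$ shows that $\{u_1\leq c\}\subset T\cdot\{u\leq c\}$, a compact set, so the closed set $\{u_1\leq c\}$ is compact and $u_1$ is proper on $X$. Properness of $q$ then yields properness of $\bar u$ on $Y$, and properness of $q'$ yields properness of $u'=\bar u\circ q'$ on $X'$. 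Thus $X'$ admits a smooth strictly plurisubharmonic exhaustion and is Stein by Grauert's solution of the Levi problem. The converse implication follows by the identical argument with $\Gamma$ and $\Gamma'$ interchanged, which is legitimate because all hypotheses used — $T$ compact, $T\Gamma=T\Gamma'$, $T$ centralizing both groups, and the $\Gamma$-- and $\Gamma'$--actions being free and proper — are symmetric in $\Gamma$ and $\Gamma'$.
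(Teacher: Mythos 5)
Your proof is correct and follows essentially the same route as the paper's: both arguments exploit $T\Gamma=T\Gamma'$, the common quotient $Y$, and compactness of $T$ to transport a strictly plurisubharmonic exhaustion between $X$ and $X'$, with Steinness then following from the Levi problem. The only cosmetic differences are that you average the lifted function over $T$ upstairs on $D$ (the paper instead averages on the Stein quotient before pulling back to $D$) and that you spell out the properness bookkeeping through $q$ and $q'$ which the paper compresses into ``by compactness of $T$''.
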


\begin{proof}
In a first step we investigate how $T$--invariant functions on $X'$ induce
$T$--invariant functions on $X$. For this let $f\colon X'\to\mbb{R}$ be any
smooth function which is invariant under $T$. It follows that the pull-back
$(p')^*f\colon D\to\mbb{R}$ is smooth and $T\Gamma'$--invariant. Since $\Gamma$
is a normal subgroup of $T\Gamma'$, we obtain a $T$--invariant smooth function
$\widetilde{f}\colon X\to\mbb{R}$.

Since the above diagram commutes, $f$ and $\widetilde{f}$ induce the same
continuous function on $Y$. By compactness of $T$ this implies that if $f$ is an
exhaustion, then $\widetilde{f}$ is also an exhaustion. Moreover, if $f$ is
strictly plurisubharmonic, then $(p')^*f$ is strictly plurisubharmonic and hence
$\widetilde{f}$ is strictly plurisubharmonic.

If $X'$ is Stein, then there exists a strictly plurisubharmonic exhaustion
function on $X'$. Since $T$ is compact, we can assume that this function is
$T$--invariant. By the above arguments, we obtain a strictly plurisubharmonic
exhaustion function on $X$. Hence, $X$ is Stein.

The converse is proved similarly.
\end{proof}

\section{Example: The unit ball in $\mbb{C}^n$}

In this section we discuss the automorphism group and the normal $j$--algebra of
the unit ball $\mbb{B}_n:=\{z\in\mbb{C}^n;\ \norm{z}<1\}$ in $\mbb{C}^n$. It has
been proven in~\cite{Fab} and~\cite{FabIan} that the quotient manifold
$\mbb{B}_n/\langle\varphi\rangle$ is Stein for hyperbolic and parabolic
automorphisms $\varphi\in\Aut_\mathcal{O}(\mbb{B}_n)$. We will give here a
different proof of this fact.

\subsection{The automorphism group of the unit ball}

Let us first describe the full automorphism group of the unit ball
$\mbb{B}_n\subset\mbb{C}^n$. For this we embed $\mbb{C}^n$ into the complex
projective space $\mbb{P}_n(\mbb{C})$ by $(z_1,\dotsc,z_n)\mapsto[z_1:\dotsb:
z_n:1]$. The image of $\mbb{B}_n$ under this embedding is given by
\begin{equation*}
D:=\bigl\{[z_1:\dotsb:z_{n+1}]\in\mbb{P}_n(\mbb{C});\ \abs{z_1}^2+\dotsb+
\abs{z_n}^2-\abs{z_{n+1}}^2<0\bigr\}.
\end{equation*}
Consequently, the group ${\rm{SU}}(n,1)$, acting as a subgroup of
${\rm{SL}}(n+1,\mbb{C})$ by projective transformations on $\mbb{P}_n(\mbb{C})$,
leaves $D$ invariant. Hence, we obtain a homomorphism
$\Phi\colon{\rm{SU}}(n,1)\to\Aut_\mathcal{O}(\mbb{B}_n)$. One can show that
$\Phi$ is a surjective homomorphism of Lie groups whose kernel coincides with
the (finite) center of ${\rm{SU}}(n,1)$ (see for example~\cite{Akh}).

In order to find explicit formulas for the automorphisms of $\mbb{B}_n$
belonging to a maximal split solvable subgroup $B_n$ of $G=\Aut_\mathcal{O}(
\mbb{B}_n)$ we make use of the realization of $\mbb{B}_n$ as the Siegel domain
\begin{equation*}
\mbb{H}_n:=\bigl\{(z,w)\in\mbb{C}\times\mbb{C}^{n-1};\
\im(z)-\norm{w}^2>0\bigr\}.
\end{equation*}
From Theorem~\ref{Thm:Grading} we obtain $2n$ one parameter subgroups of
automorphisms of $\mbb{H}_n$ which generate the group $B_n$. These are listed
together with their corresponding complete holomorphic vector fields in
Table~\ref{Table:AutomUnitBall}.

\begin{table}
\begin{center}
\begin{tabular}{cc}\toprule
One-parameter group of automorphisms &  Vector field\\ \midrule
$(z,w)\mapsto(z+t,w)$ &   $\zeta=\vf{z}$\\\addlinespace
$(z,w)\mapsto(z+2itw_k+it^2,w_1,\dotsc,w_k+t,\dotsc,w_{n-1})$ &
$\xi_k=2iw_k\vf{z}+\vf{w_k}$ ($1\leq k\leq n-1$)\\ \addlinespace
$(z,w)\mapsto(z+2tw_k+it^2,w_1,\dotsc,w_k+it,\dotsc,w_{n-1})$ &
$\eta_k=2w_k\vf{z}+i\vf{w_k}$ ($1\leq k\leq n-1$)\\ \addlinespace
$(z,w)\mapsto(e^tz,e^{t/2}w)$ &
$\delta=z\vf{z}+\frac{1}{2}\sum_{k=1}^{n-1}w_k\vf{w_k}$\\\bottomrule
\end{tabular}
\end{center}
\caption{Automorphisms of $\mbb{H}_n$ generating $B_n\cong A\ltimes
N_n$}\label{Table:AutomUnitBall}
\end{table}

\subsection{The normal $j$--algebra of the unit ball}

Let $\lie{b}_n$ be the Lie algebra of the group $B_n$. Its derived algebra
$\lie{n}_n:=[\lie{b}_n,\lie{b}_n]$ is given by
\begin{equation*}
\lie{n}_n=\mbb{R}\zeta\oplus\mbb{R}\xi_1\oplus\dotsb\oplus\mbb{R}\xi_{n-1}
\oplus\mbb{R}\eta_1\oplus\dotsb\oplus\mbb{R}\eta_{n-1},
\end{equation*}
while $\lie{a}:=\mbb{R}\delta$ is maximal Abelian consisting of semi-simple
elements of $\lie{b}_n$. One computes directly that the only non-vanishing
commutators are
\begin{equation*}
[\delta,\xi_k]=-\tfrac{1}{2}\xi_k,\quad[\delta,\eta_k]=-\tfrac{1}{2}\eta_k,
\quad[\delta,\zeta]=-\zeta,\quad[\xi_k,\eta_k]=4\zeta.
\end{equation*}
In particular, $\lie{n}_n$ is a $(2n-1)$--dimensional Heisenberg algebra with
center $\mbb{R}\zeta$. Choosing the base point $z_0=(i,0)\in\mbb{H}_n$ we
obtain via the isomorphism $\lie{b}_n\to\lie{b}_n\cdot z_0=T_{z_0}\mbb{H}_n$
the following complex structure $j$ on $\lie{b}_n$:
\begin{equation*}
j\zeta=\delta,\quad j\xi_k=\eta_k.
\end{equation*}
These data describe the normal $j$--algebra $(\lie{b}_n,j)$ of the unit ball
$\mbb{B}_n$.

In the rest of this subsection we will prove several technical facts which lead
to a proof of Steinness of $\mbb{B}_n/\Gamma$.

\begin{lemma}\label{Lem:TubeRealization}
Let $\xi\in\lie{n}_n$ be arbitrary. Then there exists an $n$--dimensional
Abelian subalgebra $\lie{n}'_n$ of $\lie{n}_n$ which contains $\xi$.
\end{lemma}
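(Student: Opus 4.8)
The plan is to recognize the whole problem as a statement in symplectic linear algebra. Recall that $\lie{n}_n$ is the Heisenberg algebra with one-dimensional center $\mbb{R}\zeta$, and that the only non-vanishing brackets are $[\xi_k,\eta_k]=4\zeta$ (so in particular $[\xi_k,\eta_l]=4\delta_{kl}\zeta$ and $[\xi_k,\xi_l]=[\eta_k,\eta_l]=0$). I would first encode the bracket as a scalar: since every commutator in $\lie{n}_n$ lies in $\mbb{R}\zeta$, there is a well-defined alternating bilinear form $\beta\colon\lie{n}_n\times\lie{n}_n\to\mbb{R}$ determined by $[u,v]=\beta(u,v)\,\zeta$. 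The relations above show that $\beta$ is non-degenerate on the span of the $\xi_k,\eta_k$ and that its radical is exactly $\mbb{R}\zeta$. Hence $\beta$ descends to a symplectic form $\omega$ on the $(2n-2)$--dimensional quotient $V:=\lie{n}_n/\mbb{R}\zeta$, and I would write $\pi\colon\lie{n}_n\to V$ for the projection.

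The key reduction is the observation that a subspace of $\lie{n}_n$ containing the center $\mbb{R}\zeta$ is an abelian subalgebra if and only if its image under $\pi$ is an isotropic subspace of $(V,\omega)$. Indeed, for elements $a,b$ of such a subspace one has $[a,b]=\beta(a,b)\,\zeta=\omega(\pi a,\pi b)\,\zeta$, which vanishes precisely when the images are $\omega$--orthogonal. Since $\dim V=2(n-1)$, the Lagrangian subspaces of $V$ have dimension $n-1$, so their preimages under $\pi$ are abelian subalgebras of $\lie{n}_n$ of dimension $(n-1)+1=n$, which is exactly the dimension we want.

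It then remains to place the given $\xi$ inside such a subalgebra. If $\xi\in\mbb{R}\zeta$, any Lagrangian subspace $L\subset V$ suffices, since $\pi^{-1}(L)$ already contains the center and hence $\xi$. Otherwise $\pi(\xi)$ is a non-zero vector of $V$, and because $\omega$ is alternating the line $\mbb{R}\,\pi(\xi)$ is automatically isotropic. I would then invoke the standard fact from symplectic linear algebra that every isotropic subspace extends to a Lagrangian subspace, obtaining a Lagrangian $L\supset\mbb{R}\,\pi(\xi)$. Setting $\lie{n}'_n:=\pi^{-1}(L)$ yields an $n$--dimensional abelian subalgebra containing both $\zeta$ and $\xi$, as required.

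Once the symplectic picture is in place the argument is essentially bookkeeping, so I do not expect a genuine obstacle. The only point demanding a line of care is the verification that the radical of $\beta$ equals $\mbb{R}\zeta$ exactly, so that $\omega$ is truly symplectic and the dimension count delivers $n$ rather than something smaller; this follows at once from the non-degeneracy of the pairing $[\xi_k,\eta_l]=4\delta_{kl}\zeta$. The extension of an isotropic subspace to a Lagrangian one is classical and may be quoted without proof.
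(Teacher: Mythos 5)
Your proof is correct, but it takes a genuinely different route from the paper's. The paper argues by induction on $n$: writing $\xi=\xi'+\xi''$ according to the decomposition $\lie{n}_n=\lie{n}_{n-1}\oplus\mbb{R}\xi_{n-1}\oplus\mbb{R}\eta_{n-1}$, it invokes the induction hypothesis to get an $(n-1)$--dimensional Abelian subalgebra $\lie{n}'_{n-1}\ni\xi'$ and sets $\lie{n}'_n:=\lie{n}'_{n-1}\oplus\mbb{R}\xi''$, using that $\mbb{R}\xi_{n-1}\oplus\mbb{R}\eta_{n-1}$ commutes with $\lie{n}_{n-1}$. You instead quotient by the center, observe that the bracket induces a symplectic form on $\lie{n}_n/\mbb{R}\zeta\cong\mbb{R}^{2(n-1)}$, and reduce the claim to the classical fact that every isotropic subspace extends to a Lagrangian one; the preimage of such a Lagrangian is the desired algebra. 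Your approach is more conceptual: it explains why the maximal dimension is exactly $n$, it actually classifies the maximal Abelian subalgebras containing the center, and it handles the degenerate case $\xi\in\mbb{R}\zeta$ cleanly. (By contrast, the paper's inductive step as written yields only an $(n-1)$--dimensional algebra when $\xi''=0$; this is trivially repaired by adjoining $\mbb{R}\xi_{n-1}$, but it is a case your argument never has to split off.) What the paper's induction buys is self-containedness --- no external symplectic lemma is quoted --- and consistency with the inductive style of the neighboring lemma on conjugating elements of $\lie{b}_n$ into $\lie{a}$. One last remark: your construction always produces a subalgebra containing $\mbb{R}\zeta$, which is not a restriction, since any $n$--dimensional Abelian subalgebra must contain the center (otherwise it would project injectively to an $n$--dimensional isotropic subspace of a $2(n-1)$--dimensional symplectic space, which is impossible).
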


\begin{proof}
We proof the claim by induction over $n$. For $n=1$ the subalgebra $\lie{n}_1$
itself is one-dimensional and Abelian. Hence, let $n>1$ and let us assume that
the claim holds for $n-1$. We write $\xi=\xi'+\xi''$ according to the
decomposition $\lie{n}_n=\lie{n}_{n-1}\oplus\mbb{R}\xi_{n-1}\oplus
\mbb{R}\eta_{n-1}$. By our induction hypotheses there exists an
$(n-1)$--dimensional Abelian subalgebra $\lie{n}_{n-1}'$ of $\lie{n}_{n-1}$
containing $\xi'$. Then $\lie{n}_n':=\lie{n}_{n-1}'\oplus\mbb{R}\xi''$ has the
required properties.
\end{proof}

As a consequence we obtain the following

\begin{proposition}[\cite{Ya},\cite{Ian}]\label{Prop:TubeRealization}
Let $N_n'$ be the analytic subgroup of $B_n$ with Lie algebra $\lie{n}'_n$.
Then every $N'_n$--orbit in $\mbb{B}_n$ is totally real and $\mbb{B}_n$ is
biholomorphically equivalent to a tube domain $D$ in $\mbb{C}^n$ such that
$N'_n$ acts by translations on $D$.
\end{proposition}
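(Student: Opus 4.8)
The plan is to treat the two claims in turn. First I would show that the $N'_n$--orbits are totally real of (maximal) dimension $n$, and then exhibit explicit global holomorphic coordinates on $\mbb{H}_n$ in which $N'_n$ becomes a group of real translations and $\mbb{H}_n$ becomes a tube.

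For the totally real claim I would argue infinitesimally and reduce everything to the base point $z_0=(i,0)$. Since $B_n$ acts by biholomorphisms, for $g\in N'_n$ the differential $dg$ is complex linear and carries $T_{z_0}(N'_n\cdot z_0)$ onto $T_{gz_0}(N'_n\cdot z_0)$; hence the orbit is totally real everywhere as soon as it is totally real at $z_0$. Under the identification $\lie{b}_n\cong T_{z_0}\mbb{H}_n$ this reduces to $\lie{n}'_n\cap j\lie{n}'_n=\{0\}$. Now $\lie{n}'_n$ is $n$--dimensional and abelian, and $n$ is the maximal possible dimension of an abelian subalgebra of the Heisenberg algebra $\lie{n}_n$; consequently $\lie{n}'_n$ contains the center $\mbb{R}\zeta$ and splits as $\lie{n}'_n=\mbb{R}\zeta\oplus L$, where $L\subset\mathrm{span}\{\xi_k,\eta_k\}$ is Lagrangian for the symplectic form $\omega$ defined by $[u,v]=\omega(u,v)\zeta$. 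Because $j\xi_k=\eta_k$ and $[\xi_k,\eta_k]=4\zeta$, the form $(u,v)\mapsto\omega(u,jv)$ is positive definite and $j$--invariant, so $jL$ is again Lagrangian and $jL\perp L$, giving $L\cap jL=\{0\}$. Since in addition $j\zeta=\delta\notin\lie{n}_n$, comparing components in $\lie{b}_n=\mbb{R}\delta\oplus\mbb{R}\zeta\oplus\mathrm{span}\{\xi_k,\eta_k\}$ yields $\lie{n}'_n\cap j\lie{n}'_n=\{0\}$, and a dimension count gives $\lie{b}_n=\lie{n}'_n\oplus j\lie{n}'_n$. As $B_n$ acts simply transitively, $N'_n$ acts freely, so the orbits have dimension $n$ and are totally real.

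For the tube realization I would first normalize $\lie{n}'_n$. The isotropy of $z_0$ contains the automorphisms $(z,w)\mapsto(z,Uw)$ with $U\in\mathrm{U}(n-1)$, which preserve $\mbb{H}_n$ and act on $\mathrm{span}\{\xi_k,\eta_k\}\cong\mbb{C}^{n-1}$ by the standard representation; since $\mathrm{U}(n-1)$ acts transitively on Lagrangian subspaces, after such a biholomorphism I may assume $\lie{n}'_n=\mbb{R}\zeta\oplus\bigoplus_k\mbb{R}\xi_k$. Integrating the vector fields of Table~\ref{Table:AutomUnitBall} then shows that $N'_n$ acts by $(z,w)\mapsto(z+s+2i\sum_k t_kw_k+i\sum_k t_k^2,\,w+t)$ with $(t,s)\in\mbb{R}^{n-1}\times\mbb{R}$. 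I would introduce the new holomorphic coordinates $u_k:=w_k$ ($1\leq k\leq n-1$) and $u_n:=z-i\sum_k w_k^2$. A short computation shows that in these coordinates $N'_n$ acts by $u\mapsto(u_1+t_1,\dotsc,u_{n-1}+t_{n-1},u_n+s)$, i.e.\ by real translations, while the defining inequality $\im(z)-\norm{w}^2>0$ transforms into $\im(u_n)-2\sum_{k=1}^{n-1}(\im u_k)^2>0$. As the right-hand side depends only on $\im u$, the image of $\mbb{H}_n$ is the tube $D=\mbb{R}^n+i\Omega'$ over the convex base $\Omega':=\{y\in\mbb{R}^n;\ y_n>2\sum_{k<n}y_k^2\}$, on which $N'_n$ acts by translations.

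The hard part is the transversality $\lie{n}'_n\cap j\lie{n}'_n=\{0\}$: it hinges on the compatibility of $j$ with the Heisenberg symplectic form (so that $jL$ is transverse to $L$) together with the fact that $j$ maps the center $\mbb{R}\zeta$ out of $\lie{n}_n$, onto $\mbb{R}\delta$. The remaining work is bookkeeping: reducing an arbitrary Lagrangian to the standard one by a unitary automorphism, and checking that the explicit substitution $u_n=z-i\sum_k w_k^2$ simultaneously linearizes the action and turns the Siegel inequality into a condition on imaginary parts alone, thereby producing a genuine (non-conical) convex tube base, consistent with the earlier remark that $\mbb{B}_n$ is not a Siegel domain of the first kind.
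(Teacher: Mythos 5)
Your proposal is correct, but it takes a genuinely different route from the paper. The paper argues softly on both counts: total reality of \emph{every} orbit is deduced in one line from the fact that $N_n'$ is Abelian and $\mbb{H}_n$ is hyperbolic (this is the content of the cited results of Yang and Iannuzzi), and the tube realization is obtained abstractly by passing to the universal globalization $\mbb{H}_n^*$ of the local action of the complexification $(N_n')^\mbb{C}\cong\mbb{C}^n$: since the orbits are totally real of maximal dimension $n$, the $(N_n')^\mbb{C}$--orbits in $\mbb{H}_n^*$ are open, so $\mbb{H}_n^*$ is homogeneous with discrete isotropy, and a simply-connectedness argument (Lemma~2.1 of~\cite{IanSpTr}) forces the isotropy to be trivial; thus $\mbb{H}_n$ embeds as an $N_n'$--invariant --- hence tube --- domain in $(N_n')^\mbb{C}\cong\mbb{C}^n$. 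You instead prove everything by hand: the Heisenberg structure forces $\lie{n}'_n=\mbb{R}\zeta\oplus L$ with $L$ Lagrangian, the compatibility of $j$ with the symplectic form $[u,v]=\omega(u,v)\zeta$ gives $L\cap jL=\{0\}$ and hence total reality at $z_0$, and then a unitary normalization plus the explicit polynomial change of coordinates $u_n=z-i\sum_k w_k^2$ simultaneously linearizes the $N_n'$--action and exhibits $\mbb{H}_n$ as the tube over the paraboloidal convex base $\{y_n>2\sum_{k<n}y_k^2\}$ (all steps check out, including the transitivity of ${\rm U}(n-1)$ on Lagrangians and the identity $\im(z)-\norm{w}^2=\im(u_n)-2\sum_k(\im u_k)^2$). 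What each approach buys: yours is elementary and self-contained --- it replaces the hyperbolicity black box and the globalization machinery of~\cite{HeIan} and~\cite{IanSpTr} by explicit linear algebra and concretely identifies the tube base --- while the paper's argument is shorter given that machinery and, more importantly, that machinery is exactly what gets reused later (the paper says ``the same argument'' proves Proposition~\ref{Prop:TotallyReal}, and globalizations reappear in Proposition~\ref{Prop:BundleStructure} and Corollary~\ref{Prop:PrincBundle}), so the abstract route integrates better with the remainder of the paper; note also that your infinitesimal argument, as written, treats only the orbit through $z_0$, whereas the paper's hyperbolicity argument covers all orbits at once --- in your write-up this is harmless only because the final tube picture (where every orbit becomes a totally real slice $\{\im u=\text{const}\}$) retroactively proves the claim for every orbit, and you should say so explicitly.
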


For the proof we have to review parts of the theory of (universal)
globalizations of local holomorphic actions. We use~\cite{HeIan} as a general
reference.

Let $M$ be a complex manifold endowed with a local holomorphic action of a
complex Lie group $L$. A globalization of this local action consists in an open
holomorphic embedding $\iota$ of $M$ into a (possibly non-Hausdorff) complex
manifold $M^*$ on which $L$ acts holomorphically such that $\iota$ is locally
equivariant and $M^*=L\cdot \iota(M)$. A globalization $M^*$ is called universal
if for every locally $L$--equivariant map $\varphi\colon M\to M'$ into an
$L$--manifold $M'$ there exists a unique $L$--equivariant map $\varphi^*\colon
M^*\to M'$ such that
the diagram
\begin{equation*}
\xymatrix{
M\ar[rr]^{\varphi}\ar[dr]_{\iota} & & M'\\
& M^*\ar[ur]_{\varphi^*} &
}
\end{equation*}
commutes. By remark in~\S 3 in~\cite{HeIan} the universal globalization of
a local $L$--action on $M$ exists if and only if any globalization exists.

\begin{proof}[Proof of Proposition~\ref{Prop:TubeRealization}]
Since the group $N_n'$ is Abelian and $\mbb{H}_n$ is hyperbolic, every
$N'_n$--orbit in $\mbb{H}_n$ must be totally real.

Let $(N_n')^\mbb{C}$ be the universal complexification of $N_n'$. Since
$(N'_n)^\mbb{C}$ acts by affine-linear transformations on $\mbb{C}^n$, the
universal globalization $\mbb{H}_n^*$ of the local $(N'_n)^\mbb{C}$--action on
$\mbb{H}_n$ exists. Since every $N'_n$--orbit is totally real and of maximal
dimension $n$, every $(N'_n)^\mbb{C}$--orbit in $\mbb{H}_n^*$ is open. Thus
$\mbb{H}_n^*\cong (N'_n)^\mbb{C}/(N'_n)^\mbb{C}_z$ is homogeneous and in
particular Hausdorff. Moreover, $\mbb{H}_n$ is biholomorphically equivalent to
a $N'_n$--invariant domain in this homogeneous space. Because of
$\dim\mbb{H}_n^*=n=\dim(N'_n)^\mbb{C}$ the isotropy $(N'_n)^\mbb{C}_z$ is
discrete. Since $(N'_n)^\mbb{C}\cong\mbb{C}^n$ is simply-connected, we may apply
Lemma~2.1 of~\cite{IanSpTr} in order to conclude that $\mbb{H}_n^*$ is
simply-connected which implies $(N'_n)^\mbb{C}_z=\{e\}$. Hence the claim
follows.
\end{proof}

\begin{lemma}
Let $\xi=\xi_\lie{a}+\xi_{\lie{n}_n}\in\lie{a}\oplus\lie{n}_n=\lie{b}_n$ be an
element with $\xi_\lie{a}\not=0$. Then there exists an element $g\in B_n$ with
$\Ad(g)\xi\in\lie{a}$.
\end{lemma}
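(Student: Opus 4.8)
The plan is to conjugate $\xi$ into $\lie{a}=\mbb{R}\delta$ by an element of the nilpotent subgroup $N_n=\exp(\lie{n}_n)\subset B_n$, exploiting the fact that $\ad(\delta)$ acts invertibly on $\lie{n}_n$. Since $\xi_\lie{a}\in\lie{a}=\mbb{R}\delta$ is nonzero, write $\xi_\lie{a}=c\delta$ with $c\neq0$, and decompose $\lie{n}_n=\lie{m}\oplus\mbb{R}\zeta$, where $\lie{m}:=\mbb{R}\xi_1\oplus\dotsb\oplus\mbb{R}\eta_{n-1}$ is the $(-\tfrac12)$--eigenspace and $\mbb{R}\zeta$ the $(-1)$--eigenspace of $\ad(\delta)$. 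Accordingly I write $\xi=c\delta+\mu+a\zeta$ with $\mu\in\lie{m}$ and $a\in\mbb{R}$. For $X\in\lie{n}_n$ one has $\Ad(\exp X)=e^{\ad(X)}$; since $\lie{n}_n$ is a two-step nilpotent (Heisenberg) algebra with central line $\mbb{R}\zeta$, one checks $\ad(X)^2=0$ on $\lie{n}_n$ and $\ad(X)^3=0$ on all of $\lie{b}_n$, so these exponential series terminate.

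I would then remove the two nilpotent components of $\xi$ in turn. First, conjugating by $\exp(X_1)$ with $X_1\in\lie{m}$ and using $[X_1,\delta]=\tfrac12 X_1$, $[X_1,\mu]\in\mbb{R}\zeta$ and $[X_1,\zeta]=0$, a short computation gives
\[
\Ad(\exp X_1)\xi=c\delta+\bigl(\mu+\tfrac{c}{2}X_1\bigr)+\bigl(a\zeta+[X_1,\mu]\bigr).
\]
Choosing $X_1=-\tfrac{2}{c}\mu$ (here $c\neq0$ is used) kills the $\lie{m}$--component, and since $[\mu,\mu]=0$ the $\zeta$--term is unchanged, so $\Ad(\exp X_1)\xi=c\delta+a\zeta$. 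Second, conjugating by $\exp(b\zeta)$ and using $[\zeta,\delta]=\zeta$ gives
\[
\Ad\bigl(\exp(b\zeta)\bigr)(c\delta+a\zeta)=c\delta+(a+bc)\zeta,
\]
and the choice $b=-a/c$ yields $c\delta\in\lie{a}$. Setting $g:=\exp(-\tfrac{a}{c}\zeta)\exp(-\tfrac{2}{c}\mu)\in N_n\subset B_n$ therefore produces $\Ad(g)\xi=c\delta\in\lie{a}$, as required.

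The computation is elementary once the bracket relations of Table~\ref{Table:AutomUnitBall} are in hand, so I expect no serious obstacle; the only point to watch is the role of the hypothesis $\xi_\lie{a}\neq0$. It enters precisely through the divisions by $c$: it is exactly what makes $\ad(\delta)$, equivalently $\ad(c\delta)$, invertible on $\lie{n}_n$ and hence allows the nilpotent part to be conjugated away. If $c=0$ the element $\xi$ would lie in $\lie{n}_n$ and be nilpotent, so no conjugate could land in the semi-simple subalgebra $\lie{a}$; the hypothesis is therefore exactly what the argument requires.
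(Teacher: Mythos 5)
Your proof is correct, but it takes a genuinely different route from the paper. The paper proves this lemma by induction on $n$: it identifies $\lie{b}_1$ with the algebra $\left\{\left(\begin{smallmatrix}t&s\\0&-t\end{smallmatrix}\right)\right\}$ of traceless upper triangular $2\times 2$ matrices and does the rank-one case by hand, then in the inductive step splits $\xi=\xi'+\xi''$ along $\lie{b}_n=\lie{b}_{n-1}\oplus(\mbb{R}\xi_{n-1}\oplus\mbb{R}\eta_{n-1})$, conjugates $\xi'$ into $\lie{a}$ by the induction hypothesis, and observes that $\mbb{R}\Ad(g)\xi'\oplus\mbb{R}\Ad(g)\xi''$ is a copy of $\lie{b}_1$ in which the remaining nilpotent component can be removed. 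You instead work directly with the $\ad(\delta)$--eigenspace decomposition $\lie{b}_n=\mbb{R}\delta\oplus\lie{m}\oplus\mbb{R}\zeta$ (eigenvalues $0$, $-\tfrac12$, $-1$) and the two-step nilpotency of the Heisenberg algebra $\lie{n}_n$, killing first the $\lie{m}$--component and then the $\zeta$--component by explicit terminating $e^{\ad(X)}$ computations; your bracket computations check out against the relations $[\delta,\xi_k]=-\tfrac12\xi_k$, $[\delta,\eta_k]=-\tfrac12\eta_k$, $[\delta,\zeta]=-\zeta$, $[\xi_k,\eta_k]=4\zeta$, and your conjugating element $g=\exp(-\tfrac{a}{c}\zeta)\exp(-\tfrac{2}{c}\mu)$ is produced in closed form. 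Your argument is more elementary and more transparent about where the hypothesis $\xi_\lie{a}\neq 0$ enters (invertibility of $\ad(c\delta)$ on $\lie{n}_n$, i.e.\ nonvanishing of the eigenvalues $-\tfrac{c}{2}$, $-c$); it is really the standard fact that an element whose semi-simple part acts invertibly on the nilradical is conjugate to that semi-simple part under the unipotent group, so it would generalize beyond the rank-one setting. The paper's induction, by contrast, fits the architecture of the surrounding section, which proves several lemmas by the same induction over the chain $\lie{b}_1\subset\lie{b}_2\subset\dotsb\subset\lie{b}_n$, and avoids writing out any exponential series. One cosmetic remark: your termination claim is even better than stated, since $\ad(X)^2=0$ already holds on all of $\lie{b}_n$ for every $X\in\lie{n}_n$, not just $\ad(X)^3=0$; this does not affect the proof.
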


\begin{proof}
We prove the lemma by induction over $n$. If $n=1$, we identify the Lie algebra
$\lie{b}_1$ with $\left\{\left(\begin{smallmatrix}t&s\\0&-t\end{smallmatrix}
\right);\ t,s\in\mbb{R}\right\}$. If $\xi=\left(\begin{smallmatrix}
t&s\\0&-t\end{smallmatrix}\right)$ with $t\not=0$ is given, one verifies that
$g=\left(\begin{smallmatrix}1&s/2t\\0&1\end{smallmatrix}\right)\in B_1$
fulfills $\Ad(g)\xi\in\lie{a}$.

Let $n>1$ and let us assume that the claim is proven for $n-1$. We write
$\xi=\xi'+\xi''$ according to $\lie{b}_n=\lie{b}_{n-1}\oplus(\mbb{R}\xi_{n-1}
\oplus\mbb{R}\eta_{n-1})$. Since $\xi'_\lie{a}=\xi_\lie{a}\not=0$ (and in
particular $\xi'\not=0$), our induction hypothesis implies the existence of an
element $g\in B_{n-1}$ such that $\Ad(g)\xi'\in\lie{a}$ holds. Since we have
\begin{equation*}
[\xi',\xi'']=[\xi'_\lie{a},\xi'']+[\xi'_{\lie{n}_{n-1}},\xi'']=-\lambda\xi''
\end{equation*}
for some $\lambda\not=0$, the subspace
$\mbb{R}\Ad(g)\xi'\oplus\mbb{R}\Ad(g)\xi''\subset\lie{a}\oplus\lie{n}_n$ is a
subalgebra of $\lie{b}_n$ isomorphic to $\lie{b}_1$. Since $\Ad(g)\xi'\not=0$,
there is an element $g'$ in the corresponding subgroup with
$\Ad(g')\bigl(\Ad(g)\xi'+ \Ad(g)\xi''\bigr)=\Ad(g'g)\xi\in\lie{a}$. Hence, the
lemma is proven.
\end{proof}

\begin{lemma}
The subspace $\lie{b}_n':=\lie{a}\oplus\mbb{R}\xi_1\oplus\dotsb\oplus
\mbb{R}\xi_{n-1}$ is an $n$--dimensional subalgebra of $\lie{b}_n$ such that
every orbit of the corresponding subgroup $B'_n$ of $B_n$ is totally real in
$\mbb{H}_n$.
\end{lemma}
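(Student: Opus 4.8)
The plan is to treat the two assertions separately: that $\lie{b}'_n$ is an $n$--dimensional subalgebra is purely algebraic, while total reality of the orbits will be reduced to a single pointwise linear-independence condition that can be settled by one determinant computation.

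First I would verify the algebraic claim directly from the commutation relations listed above. The $n$ vectors $\delta,\xi_1,\dotsc,\xi_{n-1}$ are part of the given basis of $\lie{b}_n$, so they are linearly independent and $\dim\lie{b}'_n=n$. Since $[\delta,\xi_k]=-\tfrac12\xi_k\in\lie{b}'_n$ and $[\xi_i,\xi_j]=0$ (the only non-vanishing bracket among the $\xi$'s and $\eta$'s being $[\xi_k,\eta_k]=4\zeta$, which does not enter here), the subspace $\lie{b}'_n$ is closed under the bracket and is therefore a subalgebra.

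For the geometric statement I would fix a point $p=(z,w)\in\mbb{H}_n$ and describe the tangent space of the orbit $B'_n\cdot p$ as the real span $V$ of the real vector fields underlying the holomorphic fields $\delta,\xi_1,\dotsc,\xi_{n-1}$ evaluated at $p$. Because $B'_n$ acts by biholomorphisms, $V$ is totally real precisely when $V\cap JV=\{0\}$, and the key reduction is that this holds if and only if $\delta,\xi_1,\dotsc,\xi_{n-1}$ are linearly independent \emph{over $\mbb{C}$} as elements of the holomorphic tangent space $T^{1,0}_p\mbb{H}_n\cong\mbb{C}^n$. Indeed, writing $X_j$ for the $(1,0)$--part of the $j$--th field, the real tangent vectors are $v_j=X_j+\overline{X_j}$ and $Jv_j=iX_j-i\overline{X_j}$; real independence of the $2n$ vectors $v_j,Jv_j$ is equivalent to complex independence of $\{X_j,\overline{X_j}\}$ in the complexified tangent space, and since $T^{1,0}$ and $T^{0,1}$ are complementary and interchanged by conjugation this is in turn equivalent to complex independence of the $X_j$ alone.

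It then remains to check this independence at every point, and this is the crucial step, since it is here that the geometry of the domain enters. Recording the $(1,0)$--components of $\delta$ and of the $\xi_k$ as the rows of a matrix produces the bordered matrix with first row $(z,\tfrac12 w_1,\dotsc,\tfrac12 w_{n-1})$ and lower block consisting of the rows $(2iw_k\mid I_{n-1})$; computing its determinant via the Schur complement of the identity block gives $z-i\sum_{k=1}^{n-1}w_k^2$. The point to establish is that this expression never vanishes on $\mbb{H}_n$. Writing $w_k=x_k+iy_k$, the equation $z=i\sum_k w_k^2$ forces $\im(z)=\sum_k(x_k^2-y_k^2)$, so that $\im(z)-\norm{w}^2=-2\sum_k y_k^2\leq 0$, contradicting the defining inequality $\im(z)-\norm{w}^2>0$ of $\mbb{H}_n$. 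Hence the determinant is nowhere zero, the fields $\delta,\xi_1,\dotsc,\xi_{n-1}$ are everywhere $\mbb{C}$--linearly independent, and every $B'_n$--orbit is totally real, automatically of maximal real dimension $n$.
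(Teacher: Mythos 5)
Your proof is correct and follows essentially the same route as the paper: verify the subalgebra property from the commutation relations, reduce total reality of the orbits to the non-vanishing of the determinant of the matrix formed by $\delta,\xi_1,\dotsc,\xi_{n-1}$ at each point, compute that determinant to be (a multiple of) $z-i\sum_{k}w_k^2$, and observe that its vanishing would force $\im(z)-\norm{w}^2\leq 0$, impossible in $\mbb{H}_n$. The only difference is cosmetic: you spell out the linear-algebra equivalence (via $(1,0)$--parts and the complexified tangent space) that the paper dismisses as ``elementary considerations,'' which is a welcome addition since only the implication you actually use---complex independence implies totally real---is needed.
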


\begin{proof}
Using the commutator relations one checks directly that $\lie{b}_n'$ is a
subalgebra of $\lie{b}_n$.

To prove the second claim note that for $(z,w)\in\mbb{C}\times\mbb{C}^{n-1}$ we
have
$T_{(z,w)}\bigl(B'_n\cdot(z,w)\bigr)=\lie{b}'_n\cdot(z,w)=\mbb{R}\delta(z,w)
\oplus\mbb{R}\xi_1(z,w)\oplus\dotsb\oplus\mbb{R}\xi_{n-1}(z,w)$. Elementary
considerations show that this real subspace of $\mbb{C}^n$ is totally real if
and only if the the matrix whose columns are given by the above vector fields
has non-zero determinant. Since one computes
\begin{equation*}
\det\begin{pmatrix}
2z & 2i w_1 & 2iw_2 & \cdots & 2iw_{n-1}\\
w_1 & 1 & 0 & \cdots & 0\\
w_2 & 0 & 1 &  & \vdots\\
\vdots & \vdots & & \ddots &0\\
w_{n-1} & 0 & \cdots & 0 & 1
\end{pmatrix}\\
=(-1)^{n-1}\Bigl(2z-2iw_1^2-\dotsb-2i w_{n-1}^2\Bigr),
\end{equation*}
the orbit of $B'_n\cdot(z,w)$ fails to be totally real if and only if
$z=i\sum_{k=1}^{n-1}w_k^2$ holds. Because of
\begin{equation*}
\im\left(i\sum_{k=1}^{n-1}w_k^2\right)-\norm{w}^2=-2\sum_{k=1}^{n-1}
\im(w_k)^2\leq0
\end{equation*}
such a point does not lie in $\mbb{H}_n$ which proves the claim.
\end{proof}

We have established the following fact.

\begin{corollary}\label{Cor:PerfectGroup}
Let $\xi\in\lie{b}_n$ be an arbitrary element. Then there exists an
$n$--dimensional subalgebra $\lie{b}'_n$ of $\lie{b}_n$ containing $\xi$ such
that the corresponding group $B'_n$ has only totally real orbits in $\mbb{H}_n$.
\end{corollary}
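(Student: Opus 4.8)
The plan is to reduce the corollary to the four preceding results by distinguishing whether or not the $\lie{a}$--component of $\xi$ vanishes. Accordingly I write $\xi=\xi_\lie{a}+\xi_{\lie{n}_n}$ with respect to the decomposition $\lie{b}_n=\lie{a}\oplus\lie{n}_n$ and treat the two resulting cases separately.

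In the case $\xi_\lie{a}=0$ we have $\xi\in\lie{n}_n$, so Lemma~\ref{Lem:TubeRealization} immediately supplies an $n$--dimensional Abelian subalgebra $\lie{n}'_n\subset\lie{n}_n$ containing $\xi$, and Proposition~\ref{Prop:TubeRealization} shows that the orbits of the corresponding subgroup are totally real in $\mbb{H}_n$. Hence $\lie{b}'_n:=\lie{n}'_n$ already has all the required properties. In the case $\xi_\lie{a}\neq0$ I would first invoke the lemma producing an element $g\in B_n$ with $\Ad(g)\xi\in\lie{a}$, and then take as a starting point the explicitly constructed subalgebra $\lie{a}\oplus\mbb{R}\xi_1\oplus\dotsb\oplus\mbb{R}\xi_{n-1}$, which is $n$--dimensional, contains $\lie{a}$, and has totally real orbits by the preceding lemma. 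Conjugating it back, I would set $\lie{b}'_n:=\Ad(g^{-1})\bigl(\lie{a}\oplus\mbb{R}\xi_1\oplus\dotsb\oplus\mbb{R}\xi_{n-1}\bigr)$, which is again an $n$--dimensional subalgebra and which contains $\Ad(g^{-1})\Ad(g)\xi=\xi$.

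The one point that needs more than bookkeeping — and hence the main obstacle — is to check that total reality of orbits is preserved under this conjugation. Here I would argue that $g$ acts as a biholomorphic automorphism of $\mbb{H}_n$ and intertwines the action of the group with Lie algebra $\lie{a}\oplus\mbb{R}\xi_1\oplus\dotsb\oplus\mbb{R}\xi_{n-1}$ with the action of the subgroup corresponding to $\lie{b}'_n$; concretely, the orbit of the latter through a point $p$ equals the image under $g^{-1}$ of the orbit of the former through $g\cdot p$. Since biholomorphisms carry totally real submanifolds to totally real submanifolds, each such orbit is totally real, which completes the argument.
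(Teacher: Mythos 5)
Your proposal is correct and is essentially the paper's own (implicit) argument: the paper states the corollary with only the remark ``We have established the following fact,'' intending exactly your case distinction on whether $\xi_\lie{a}$ vanishes, with Lemma~\ref{Lem:TubeRealization} and Proposition~\ref{Prop:TubeRealization} handling $\xi\in\lie{n}_n$ and the two subsequent lemmas plus conjugation handling $\xi_\lie{a}\neq0$. Your extra care in verifying that total reality of orbits is preserved under the biholomorphism $g^{-1}$ (since $(g^{-1}B'_ng)\cdot p = g^{-1}\cdot\bigl(B'_n\cdot(g\cdot p)\bigr)$ and complex-linear differentials carry totally real subspaces to totally real subspaces) is exactly the point the paper leaves tacit, and it is handled correctly.
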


The same argument as in the proof of Proposition~\ref{Prop:TubeRealization}
applies to show the following

\begin{proposition}\label{Prop:TotallyReal}
Let $\xi\in\lie{b}_n$ be arbitrary. Then there exists a subgroup $B'_n\subset
B_n$ containing $\exp(\xi)$ such that $\mbb{H}_n$ is biholomorphically
equivalent to a $B'_n$--invariant domain in $(B'_n)^\mbb{C}$ where
$(B'_n)^\mbb{C}$ acts by left multiplication on itself.
\end{proposition}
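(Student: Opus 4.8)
The plan is to transcribe the proof of Proposition~\ref{Prop:TubeRealization} almost verbatim, replacing the two places where abelianness of $N'_n$ was used by the facts available for the group supplied by Corollary~\ref{Cor:PerfectGroup}. First I would apply Corollary~\ref{Cor:PerfectGroup} to the given $\xi\in\lie{b}_n$, obtaining an $n$--dimensional subalgebra $\lie{b}'_n\ni\xi$ whose associated connected subgroup $B'_n\subset B_n$ has only totally real orbits in $\mbb{H}_n$. Since $\xi\in\lie{b}'_n$ we have $\exp(\xi)\in B'_n$, so this $B'_n$ is the group we want. Being a subalgebra of the split solvable algebra $\lie{b}_n$, the algebra $\lie{b}'_n$ is again split solvable, and by the properties of split solvable groups collected above the subgroup $B'_n$ is closed and simply connected of dimension $n$.

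Next I would record the two inputs needed. On the one hand, the last assertion of the theorem on split solvable groups, applied with $S=B'_n$ and trivial $S'$, yields $(B'_n)^\mbb{C}\cong\mbb{C}^n$; in particular $(B'_n)^\mbb{C}$ is simply connected. This replaces the observation $(N'_n)^\mbb{C}\cong\mbb{C}^n$ from the abelian case. On the other hand, all the one-parameter groups listed in Table~\ref{Table:AutomUnitBall} act by affine-linear transformations of $\mbb{C}\times\mbb{C}^{n-1}$, so the whole of $B_n$, hence $B'_n$ and its complexification $(B'_n)^\mbb{C}$, act affine-linearly on $\mbb{C}^n$. Consequently the local holomorphic $(B'_n)^\mbb{C}$--action on $\mbb{H}_n$ admits a globalization inside $\mbb{C}^n$, and therefore the universal globalization $\mbb{H}_n^*$ exists by the criterion of~\cite{HeIan} recalled above.

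The core of the argument is then the same orbit-dimension count as before. Each $B'_n$--orbit in $\mbb{H}_n$ is totally real and, by the determinant computations underlying Corollary~\ref{Cor:PerfectGroup}, of full dimension $n=\dim_\mbb{C}(B'_n)^\mbb{C}$; hence its tangent space together with its image under $j$ fills the whole tangent space, so every $(B'_n)^\mbb{C}$--orbit in $\mbb{H}_n^*$ is open. Open orbits are also closed, and $\iota(\mbb{H}_n)$ is connected, so the relation $\mbb{H}_n^*=(B'_n)^\mbb{C}\cdot\iota(\mbb{H}_n)$ forces $\mbb{H}_n^*$ to be a single orbit $(B'_n)^\mbb{C}/(B'_n)^\mbb{C}_z$, which is homogeneous and in particular Hausdorff, and $\mbb{H}_n$ is biholomorphic to a $B'_n$--invariant domain in it. The equality $\dim\mbb{H}_n^*=n=\dim(B'_n)^\mbb{C}$ makes the isotropy $(B'_n)^\mbb{C}_z$ discrete, and simple connectivity of $(B'_n)^\mbb{C}$ together with Lemma~2.1 of~\cite{IanSpTr} (giving simple connectivity of $\mbb{H}_n^*$) forces $(B'_n)^\mbb{C}_z=\{e\}$. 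Thus $\mbb{H}_n^*\cong(B'_n)^\mbb{C}$ with $(B'_n)^\mbb{C}$ acting by left multiplication, which is exactly the assertion.

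The only genuinely new points compared with Proposition~\ref{Prop:TubeRealization} are the two inputs isolated in the second paragraph. I expect the step needing the most care to be the identification $(B'_n)^\mbb{C}\cong\mbb{C}^n$, which in the abelian case was immediate but here rests on the structure theory of split solvable groups; a secondary point to verify is that the affine-linear action of $(B'_n)^\mbb{C}$ on $\mbb{C}^n$ genuinely restricts to the prescribed local action on $\mbb{H}_n$, so that $\mbb{C}^n$ legitimately provides a globalization. Everything else transcribes directly from the earlier proof.
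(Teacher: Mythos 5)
Your proposal is correct and is essentially the paper's own argument: the paper proves Proposition~\ref{Prop:TotallyReal} simply by asserting that ``the same argument as in the proof of Proposition~\ref{Prop:TubeRealization} applies,'' with Corollary~\ref{Cor:PerfectGroup} supplying the group $B'_n$. The two substitutions you isolate --- simple connectivity of $(B'_n)^\mbb{C}\cong\mbb{C}^n$ via part~(5) of the theorem on split solvable groups, and existence of the globalization via the affine-linear action of $(B'_n)^\mbb{C}$ on $\mbb{C}^n$ --- are exactly the points the paper leaves implicit, so your write-up fills in the intended proof faithfully.
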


\subsection{Quotients of the unit ball}

Let $\varphi$ be an automorphism of the unit ball $\mbb{B}_n$ which generates a
discrete subgroup $\Gamma\subset G$. The following proposition gives a
necessary condition for $X=\mbb{B}_n/\Gamma$ to be Stein.

\begin{proposition}\label{Prop:Embedding}
Let $\Omega$ be a domain in a Stein manifold $M$. Then $\Omega$ is Stein if and
only if some covering of $\Omega$ is Stein.
\end{proposition}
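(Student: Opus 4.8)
The plan is to treat the two implications separately. The forward implication is immediate: the identity map $\Omega\to\Omega$ is a covering, so $\Omega$ is a Stein covering of itself whenever it is Stein. The content is therefore the reverse implication, which I would deduce from the solution of the Levi problem for Riemann domains over Stein manifolds, with the embedding $\Omega\subset M$ entering in an essential way.

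So suppose $p\colon\widehat\Omega\to\Omega$ is a (connected) covering with $\widehat\Omega$ Stein, and write $\iota\colon\Omega\hookrightarrow M$ for the inclusion. The composition $\sigma:=\iota\circ p\colon\widehat\Omega\to M$ is a local biholomorphism, so $(\widehat\Omega,\sigma)$ is an unramified Riemann domain spread over the Stein manifold $M$. Fix a Hermitian metric on $M$, let $d_\Omega(x)=\mathrm{dist}(x,\partial\Omega)$ be the associated boundary distance on $\Omega$, and let $d_\sigma(\hat x)$ denote the boundary distance of the spread, namely the supremum of radii $r$ for which the metric ball $B(\sigma(\hat x),r)\subset M$ admits a lift through $\sigma$ sending $\sigma(\hat x)$ to $\hat x$. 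The key point I would establish is the identity $d_\sigma=d_\Omega\circ p$: since $\sigma=\iota\circ p$ takes values in $\Omega$, a ball $B(x,r)$ can be lifted through $\sigma$ only if $B(x,r)\subseteq\Omega$, while conversely a geodesically convex (hence simply connected) ball contained in $\Omega$ lifts through the covering $p$. Taking the supremum over admissible radii yields $d_\sigma(\hat x)=\sup\{r\colon B(p(\hat x),r)\subseteq\Omega\}=d_\Omega(p(\hat x))$.

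Granting this, I would transport pseudoconvexity downward. Because $\widehat\Omega$ is Stein it is pseudoconvex as a Riemann domain over $M$, so $-\log d_\sigma$ is plurisubharmonic on $\widehat\Omega$. By the identity above, $-\log d_\sigma=(-\log d_\Omega)\circ p$, and since $p$ is a surjective local biholomorphism while plurisubharmonicity is a local property invariant under biholomorphisms, it follows that $-\log d_\Omega$ is plurisubharmonic on $\Omega$. Thus $\Omega$ is a pseudoconvex domain in the Stein manifold $M$, and by the solution of the Levi problem in Stein manifolds (Docquier--Grauert) the domain $\Omega$ is Stein.

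The step I expect to require the most care is the boundary-distance identity $d_\sigma=d_\Omega\circ p$, since this is exactly where the hypothesis that $p$ is a genuine \emph{covering}, rather than merely a local biholomorphism, is used: one needs balls lying inside $\Omega$ to lift globally through $p$. Everything else is a formal consequence of the invariance of plurisubharmonicity under local biholomorphisms together with the two halves of the Levi problem over Stein manifolds, and I would only need to remark that the class of pseudoconvex domains does not depend on the auxiliary choice of Hermitian metric on $M$.
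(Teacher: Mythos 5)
Your strategy is the classical boundary--distance argument, and it is essentially complete in the special case $M=\mbb{C}^n$ with the Euclidean metric: there Oka's theorem gives plurisubharmonicity of $-\log d_\sigma$ for a Stein Riemann domain over $\mbb{C}^n$, Euclidean balls are convex (hence simply connected) and lift through $p$, and pseudoconvexity of $\Omega$ follows. The genuine gap is that the proposition concerns an arbitrary Stein manifold $M$, and your key step --- ``$\widehat\Omega$ Stein implies $-\log d_\sigma$ plurisubharmonic'' --- is not a theorem for an arbitrary Hermitian metric on an arbitrary Stein base. Plurisubharmonicity of the negative logarithm of a boundary distance is metric-sensitive: Oka's proof uses the affine structure of $\mbb{C}^n$ (translations, Hartogs radii), and the known generalizations (Elencwajg, Suzuki, Takeuchi) require curvature hypotheses such as nonnegative holomorphic bisectional curvature. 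Your closing remark that pseudoconvexity is independent of the metric is correct for \emph{local} pseudoconvexity (local Steinness, Hartogs figures), but what your proof actually uses is the global psh-ness of $-\log d_\sigma$, which does depend on the metric. This matters for the paper: the proposition is applied (in Theorem~\ref{Thm:BallQuotients} and in the main theorem) with $M$ equal to quotients such as $(B_n')^\mbb{C}/\Gamma$, a $\mbb{C}^*$--bundle over $\mbb{C}^{n-1}$, so the Euclidean case is not sufficient. A second, lesser, issue is the identity $d_\sigma=d_\Omega\circ p$: it requires every inscribed ball $B(p(\hat x),r)$ with $r<d_\Omega(p(\hat x))$ to be simply connected, whereas geodesic convexity of metric balls holds only below the convexity radius. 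This part is repairable --- after passing to the universal covering, which is Stein by~\cite{S}, one only needs the easy inequality $d_\sigma\le d_\Omega\circ p$ together with deck-invariance of $d_\sigma$ to push down a psh function on $\Omega$ blowing up at $\partial\Omega$, and then a strictly psh exhaustion of $M$ finishes via Grauert's theorem (note that Docquier--Grauert's criterion is local Steinness, not psh-ness of $-\log d_\Omega$) --- but it still rests on the unproven psh-ness upstairs.

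By contrast, the paper's proof is arranged precisely to avoid metrics and distance functions. It passes to the universal covering $\widetilde\Omega$ (Stein by Stein's theorem~\cite{S}), characterizes Steinness of a domain in a Stein manifold by the absence of Hartogs figures $(H,P)$ with $H\subset\Omega$ and $P\not\subset\Omega$ (Docquier--Grauert~\cite{DocGr}), lifts $H$ --- which is simply connected --- through the covering to obtain a section $s\colon H\to\widetilde\Omega$, extends $s$ holomorphically to $P$ by realizing $\widetilde\Omega$ as a closed submanifold of some $\mbb{C}^N$, and then the identity principle forces $p\circ s=\id_P$, contradicting $P\not\subset\Omega$. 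If you wish to keep your outline, the correct fix is to replace ``$-\log d_\sigma$ is psh'' by a metric-free notion of pseudoconvexity for the Riemann domain $(\widehat\Omega,\sigma)$ (Hartogs pseudoconvexity or local Steinness); but descending that property along the covering is then exactly the lifting-and-extension argument above, so your proof would in effect become the paper's.
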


\begin{proof}
Let us assume that there is a covering of $\Omega$ which is Stein. It follows
from~\cite{S} that the universal covering $p\colon\widetilde{\Omega}\to\Omega$
is then Stein, too. If $\Omega$ is not Stein, then there exists a Hartogs figure
$(H,P)$ in $M$ such that $H\subset\Omega$ and $P\not\subset\Omega$ hold
(see~\cite{DocGr}). Since $H$ is simply connected, each component of $p^{-1}(H)$
is mapped biholomorphically onto $H$ by $p$. Let $\widetilde{H}$ be a component
of $p^{-1}(H)$ and write $s:=(p|_{\widetilde{H}})^{-1}\colon H\to\widetilde{H}$.
Since by assumption $\widetilde{\Omega}$ is a Stein manifold, we can embed it as
a closed submanifold into some $\mbb{C}^N$. Hence, the map $s$ extends to a map
$s\colon P\to\widetilde{\Omega}\subset\mbb{C}^N$. Thus the composition
$p\circ s\colon P\to\Omega\subset M$ is defined. Since $(p\circ s)|_H=\id_H$
holds, the continuation principle shows $p\circ s=\id_P$, which contradicts our
assumption $P\not\subset\Omega$.
\end{proof}

\begin{theorem}\label{Thm:BallQuotients}
Let $\varphi\in G$ be any automorphism generating a discrete subgroup $\Gamma$
of $G$. Then the quotient $X=\mbb{B}_n/\Gamma$ is a Stein space.
\end{theorem}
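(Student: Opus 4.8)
The plan is to reduce, exactly as in the previous section, to the case where $\varphi$ lies in the maximal split solvable subgroup $B_n$, and then to exhibit $X$ as a domain in an explicit Stein manifold to which Proposition~\ref{Prop:Embedding} applies. Since a finite cyclic $\Gamma$ is covered by the classical case, I assume $\Gamma\cong\mbb{Z}$. By Proposition~\ref{Prop:Reduction} we may replace $\varphi$ by the product $\varphi_{\sf h}\varphi_{\sf u}$ of its hyperbolic and unipotent parts, so that $\varphi$ has trivial elliptic part; by Proposition~\ref{Prop:Conjugacy} such a $\varphi$ is conjugate into $B_n$, and since conjugate groups yield biholomorphic quotients we may assume $\varphi=\exp(\xi)\in B_n$ with $\xi\in\lie{b}_n\setminus\{0\}$. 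Applying Proposition~\ref{Prop:TotallyReal} to this $\xi$, I obtain a subgroup $B'_n\subset B_n$ with $\varphi\in B'_n$ such that $\mbb{H}_n\cong\mbb{B}_n$ is biholomorphic to a $B'_n$--invariant domain $\Omega\subset(B'_n)^\mbb{C}$, where $(B'_n)^\mbb{C}$ acts on itself by left multiplication. Thus $\Gamma\subset B'_n\subset(B'_n)^\mbb{C}$ acts by left translations, freely and properly (being closed), and $X=\mbb{B}_n/\Gamma\cong\Omega/\Gamma$ is a domain in the complex manifold $M:=(B'_n)^\mbb{C}/\Gamma$.

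The heart of the argument is to show that $M$ is Stein. Let $Z^\mbb{C}$ denote the universal complexification inside $(B'_n)^\mbb{C}$ of the one--parameter subgroup $\exp(\mbb{R}\xi)\cong\mbb{R}$. By the last assertion of the structure theorem for split solvable Lie groups stated above, $Z^\mbb{C}$ is a closed complex subgroup isomorphic to $(\mbb{C},+)$ and the homogeneous space $(B'_n)^\mbb{C}/Z^\mbb{C}$ is biholomorphic to $\mbb{C}^{n-1}$. The projection $(B'_n)^\mbb{C}\to(B'_n)^\mbb{C}/Z^\mbb{C}\cong\mbb{C}^{n-1}$ is a holomorphic principal bundle with structure group $Z^\mbb{C}\cong(\mbb{C},+)$; since the base is Stein and $H^1(\mbb{C}^{n-1},\mathcal{O})=0$, this bundle is holomorphically trivial, so there is a $Z^\mbb{C}$--equivariant biholomorphism $(B'_n)^\mbb{C}\cong\mbb{C}^{n-1}\times\mbb{C}$ in which $Z^\mbb{C}$ acts by translation on the last factor. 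Under the identification $Z^\mbb{C}\cong\mbb{C}$ (with $\exp(t\xi)\leftrightarrow t$) the group $\Gamma=\langle\exp(\xi)\rangle$ corresponds to $\mbb{Z}\subset\mbb{C}$ and acts trivially on the base, whence
\begin{equation*}
M=(B'_n)^\mbb{C}/\Gamma\cong\mbb{C}^{n-1}\times(\mbb{C}/\mbb{Z})\cong\mbb{C}^{n-1}\times\mbb{C}^*,
\end{equation*}
which is Stein.

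It remains to conclude via Proposition~\ref{Prop:Embedding}. As $\Omega$ is open and connected in $(B'_n)^\mbb{C}$ and $\Gamma$ acts freely and properly, the quotient $X\cong\Omega/\Gamma$ is a domain in the Stein manifold $M$. The natural projection $\mbb{B}_n\cong\Omega\to X$ is a covering map whose total space $\mbb{B}_n$ is Stein. Thus $X$ is a domain in a Stein manifold which admits a Stein covering, and Proposition~\ref{Prop:Embedding} shows that $X$ is itself Stein.

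The main obstacle is the middle step, namely producing a Stein manifold that contains $X$ as a domain. Everything rests on realizing $\mbb{B}_n$ as a left--invariant domain in the complexified solvable group $(B'_n)^\mbb{C}$ (Proposition~\ref{Prop:TotallyReal}) and on the triviality of the principal $(\mbb{C},+)$--bundle over $\mbb{C}^{n-1}$, which converts the quotient into the manifestly Stein product $\mbb{C}^{n-1}\times\mbb{C}^*$. I expect the verification that $Z^\mbb{C}$ is closed and isomorphic to $(\mbb{C},+)$, and that the trivialization may be chosen equivariantly, to demand the most care, although both are furnished by the structure theory recalled earlier.
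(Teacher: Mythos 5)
Your proof is correct and follows essentially the same route as the paper: reduction of $\varphi$ into $B_n$ via Propositions~\ref{Prop:Reduction} and~\ref{Prop:Conjugacy}, the totally real realization of $\mbb{B}_n$ inside $(B_n')^\mbb{C}$ from Proposition~\ref{Prop:TotallyReal}, Steinness of $(B_n')^\mbb{C}/\Gamma$ via its fibration over $(B_n')^\mbb{C}/\mbb{C}_\Gamma\cong\mbb{C}^{n-1}$, and the conclusion by Proposition~\ref{Prop:Embedding}. The only difference is cosmetic: you explicitly trivialize the $(\mbb{C},+)$--principal bundle $(B_n')^\mbb{C}\to\mbb{C}^{n-1}$ (using $H^1(\mbb{C}^{n-1},\mathcal{O})=0$) to exhibit the quotient as $\mbb{C}^{n-1}\times\mbb{C}^*$, whereas the paper observes directly that $(B_n')^\mbb{C}/\Gamma$ is a $\mbb{C}^*$--principal bundle over the Stein base $\mbb{C}^{n-1}$ and is therefore Stein.
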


\begin{proof}
By virtue of Proposition~\ref{Prop:Reduction} we can assume that $\varphi$ lies
in the maximal split solvable subgroup $B_n\subset G$. Then we find an
$n$--dimensional closed subgroup $B_n'\subset B_n$ containing $\Gamma$ such that
each $B_n'$--orbit is totally real in $\mbb{B}_n$. By
Proposition~\ref{Prop:TotallyReal} we may embed $\mbb{B}_n$ as a
$B_n'$--invariant domain into $(B_n')^\mbb{C}$ where $(B_n')^\mbb{C}$ acts by
left multiplication on itself. Let $\mbb{C}_\Gamma$ be the complex one parameter
subgroup of $(B_n')^\mbb{C}$ which contains $\Gamma$. Since
$(B_n')^\mbb{C}/\Gamma$ is a $\mbb{C}^*$--principal bundle over
$(B_n')^\mbb{C}/\mbb{C}_\Gamma\cong\mbb{C}^{n-1}$, we conclude that
$(B_n')^\mbb{C}/\Gamma$ is a Stein manifold. Therefore the claim follows from
Proposition~\ref{Prop:Embedding}.
\end{proof}

\section{Existence of equivariant holomorphic submersions}

In~\cite{Pya} the $j$--invariant ideals of a normal $j$--algebra are
investigated. For the sake of completeness we indicate how the root space
decomposition of a normal $j$--algebra may be used to find a $j$--invariant
ideal which is isomorphic to the normal $j$--algebra of the unit ball.

\subsection{Existence of $j$--invariant ideals isomorphic to the unit ball}

Let $(\lie{s},j)$ be a normal $j$--algebra with gradation $\lie{s}=
\lie{s}_{-1}\oplus\lie{s}_{-1/2}\oplus\lie{s}_0$. We define $\lie{s}':=
\lie{s}'_{-1}\oplus\lie{s}'_{-1/2}\oplus\lie{s}'_0$ by
\begin{align*}
\lie{s}'_{-1}&:=\bigoplus_{k=1}^{r-1}\lie{s}_{\alpha_k}\oplus\bigoplus_{1\leq
k<l\leq r-1}\lie{s}_{(\alpha_l+\alpha_k)/2},\\
\lie{s}'_{-1/2}&:=\bigoplus_{k=1}^{r-1}\lie{s}_{\alpha_k/2},\\
\lie{s}'_0&:=\mbb{R}\eta_1\oplus\dotsb\oplus\mbb{R}\eta_{r-1}\oplus\bigoplus_{
1\leq k<l\leq r-1}\lie{s}_{(\alpha_l-\alpha_k)/2},
\end{align*}
i.\,e.\ $\lie{s}'$ is the direct sum of all root spaces in which the roots
$\alpha_r$, $\tfrac{1}{2}\alpha_r$ or $\tfrac{1}{2}(\alpha_r\pm\alpha_k)$
($1\leq k\leq r-1$) do not appear.

\begin{lemma}
The subspace $\lie{s}'$ is a $j$--invariant subalgebra of $\lie{s}$. Moreover,
there exists an $\omega'\in(\lie{s}')^*$ such that $(\lie{s}',j')$ is a
normal $j$--algebra where $j':=j|_{\lie{s}'}$.
\end{lemma}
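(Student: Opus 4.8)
The plan is to verify directly that $\lie{s}'$ is closed under the bracket by using the root space decomposition~\eqref{Eqn:RootSpaceDecomp} together with the relation $[\lie{s}_\alpha,\lie{s}_\beta]\subset\lie{s}_{\alpha+\beta}$. Concretely, $\lie{s}'$ is spanned by $\lie{a}'=\mbb{R}\eta_1\oplus\dotsb\oplus\mbb{R}\eta_{r-1}$ together with all root spaces $\lie{s}_\gamma$ whose root $\gamma$ involves only the indices $1,\dotsc,r-1$ (that is, $\gamma\in\{\alpha_k,\tfrac{1}{2}\alpha_k,\tfrac{1}{2}(\alpha_l\pm\alpha_k)\}$ with $1\leq k<l\leq r-1$). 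First I would check that the set of such roots is closed under addition whenever the sum is again a root: if $\gamma,\gamma'$ both avoid the index $r$, then so does $\gamma+\gamma'$, since by Proposition~\ref{Prop:Finestructure}(2) every root is a half-integer combination of the $\alpha_k$, and adding two roots that do not involve $\alpha_r$ cannot produce $\alpha_r$ in the result. Next I would note that $[\lie{a}',\lie{s}_\gamma]\subset\lie{s}_\gamma$ for $\gamma$ avoiding $r$ (the bracket just multiplies by $\gamma(\eta_k)$), and $[\lie{a}',\lie{a}']=0$ since $\lie{a}$ is Abelian. Hence $\lie{s}'$ is a subalgebra.

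For $j$--invariance I would invoke Proposition~\ref{Prop:Finestructure}(3)--(5). Part~(3) gives $j\eta_k=-\xi_k$ and $\lie{s}_{\alpha_k}=\mbb{R}\xi_k$, so $j$ interchanges $\mbb{R}\eta_k$ and $\lie{s}_{\alpha_k}$, both of which lie in $\lie{s}'$ for $1\leq k\leq r-1$. Part~(4) gives $j\lie{s}_{(\alpha_l-\alpha_k)/2}=\lie{s}_{(\alpha_l+\alpha_k)/2}$, and for $k<l\leq r-1$ both summands belong to $\lie{s}'$. Part~(5) gives $j\lie{s}_{\alpha_k/2}=\lie{s}_{\alpha_k/2}$ for $k\leq r-1$, again inside $\lie{s}'$. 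Since these root spaces together with $\lie{a}'$ span $\lie{s}'$ and $j$ permutes them among themselves, $j\lie{s}'\subset\lie{s}'$; as $j$ is a complex structure this yields $j\lie{s}'=\lie{s}'$, so $j'=j|_{\lie{s}'}$ is a well-defined complex structure on $\lie{s}'$.

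It remains to produce $\omega'\in(\lie{s}')^*$ making $(\lie{s}',j')$ a normal $j$--algebra. The integrability identity~\eqref{Eqn:Integrability} holds for all $\xi,\xi'\in\lie{s}$, hence in particular for $\xi,\xi'\in\lie{s}'$, so $j'$ satisfies integrability automatically. For the linear form I would take $\omega':=\omega|_{\lie{s}'}$, or more safely the form dual to $\delta':=\eta_1+\dotsb+\eta_{r-1}$ in the manner $\omega$ is built from $\delta$ in the ambient algebra; the natural candidate is the restriction of the admissible form, and I would verify that $\langle\xi,\xi'\rangle_{\omega'}=\omega'([j'\xi,\xi'])$ agrees with the restriction of $\langle\cdot,\cdot\rangle_\omega$ to $\lie{s}'$. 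Because $\lie{s}'$ is a $j$--invariant subalgebra and the root space decomposition is orthogonal by Proposition~\ref{Prop:Finestructure}(1), the restriction of a $j$--invariant inner product to the $j$--invariant subspace $\lie{s}'$ is again $j'$--invariant and positive definite. The one point requiring care, and the main obstacle, is that $\langle\xi,\xi'\rangle_\omega=\omega([j\xi,\xi'])$ must reduce correctly on $\lie{s}'$: I must confirm that $[j'\xi,\xi']=[j\xi,\xi']$ stays in $\lie{s}'$ and that $\omega([j\xi,\xi'])$ depends only on the component of $[j\xi,\xi']$ in $\lie{s}'$, so that $\omega':=\omega|_{\lie{s}'}$ genuinely reproduces the inner product. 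This follows once one checks that $\omega$ is supported on the relevant root spaces (it pairs nontrivially only with $\lie{s}_{\alpha_1}\oplus\dotsb\oplus\lie{s}_{\alpha_r}$, the image of $\lie{a}$ under $j$), so restricting to $\lie{s}'$ simply drops the $\alpha_r$--direction and leaves a normal $j$--algebra structure of rank $r-1$.
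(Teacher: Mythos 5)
Your proposal is correct and follows essentially the same route as the paper: closure under the bracket via the root space decomposition, $j$--invariance via Proposition~\ref{Prop:Finestructure}~(3)--(5), and $\omega':=\omega|_{\lie{s}'}$. Note that your final ``main obstacle'' is a non-issue: once you know $[j'\xi,\xi']=[j\xi,\xi']\in\lie{s}'$, the identity $\omega'([j'\xi,\xi'])=\omega([j\xi,\xi'])$ is immediate (no projection onto $\lie{s}'$ ever enters), so the unproven claim about $\omega$ being supported on $\lie{s}_{\alpha_1}\oplus\dotsb\oplus\lie{s}_{\alpha_r}$ is not needed.
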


\begin{proof}
The fact that $\lie{s}'$ is closed under the Lie bracket follows from the
properties of the root space decomposition and $j$--invariance is a direct
consequence of Proposition~\ref{Prop:Finestructure}~(3)-(5). Setting
$\omega':=\omega|_{\lie{s}'}$ the claim follows.
\end{proof}

Let $\pi\colon\lie{s}\to\lie{s}$ be the orthogonal projection onto $\lie{s}'$
with respect to $\langle\cdot,\cdot\rangle_\omega$.

\begin{lemma}
The map $\pi$ is a homomorphism of normal $j$--algebras algebras whose kernel is
given by
\begin{equation*}
\lie{b}:=\lie{s}_{\alpha_r}\oplus\bigoplus_{k=1}^{r-1}\lie{s}_{
(\alpha_r+\alpha_k)/2}\oplus\lie{s}_{\alpha_r/2}\oplus\mbb{R}\eta_r\oplus
\bigoplus_{k=1}^{r-1}\lie{s}_{(\alpha_r-\alpha_k)/2},
\end{equation*}
and hence induces an isomorphism $\lie{s}/\lie{b}\cong\lie{s}'$. In particular,
$\lie{b}$ is a $j$--invariant ideal in $\lie{s}$ and thus inherits the
structure of a normal $j$--algebra.
\end{lemma}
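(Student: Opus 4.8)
The plan is to verify directly that the orthogonal decomposition $\lie{s}=\lie{s}'\oplus\lie{b}$ exhibits $\lie{b}$ as a $j$--invariant ideal complementary to the subalgebra $\lie{s}'$, from which every assertion of the lemma follows formally. First I would identify $\ker\pi$ with $\lie{b}$. By definition of the orthogonal projection, $\ker\pi=(\lie{s}')^\perp$, so it suffices to check $\lie{b}=(\lie{s}')^\perp$. Comparing the two displayed lists of root spaces shows that $\lie{s}'$ and $\lie{b}$ together exhaust every summand of the root space decomposition~\eqref{Eqn:RootSpaceDecomp} and of $\lie{a}=\mbb{R}\eta_1\oplus\dotsb\oplus\mbb{R}\eta_r$, so that $\lie{s}=\lie{s}'\oplus\lie{b}$ as vector spaces. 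To see that this sum is orthogonal I would use Proposition~\ref{Prop:Finestructure}~(1): distinct root spaces are mutually orthogonal and orthogonal to $\lie{a}=\lie{s}_0$, while a short computation with $\langle\eta_k,\eta_l\rangle_\omega=\omega\bigl([j\eta_k,\eta_l]\bigr)=-\omega\bigl([\xi_k,\eta_l]\bigr)$ together with $\alpha_k(\eta_l)=-\delta_{kl}$ shows that the $\eta_k$ are pairwise orthogonal. Hence $\lie{b}\perp\lie{s}'$, and by the dimension count above $\lie{b}=(\lie{s}')^\perp=\ker\pi$.

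The heart of the argument, and the step I expect to require the most care, is to show that $\lie{b}$ is an ideal. Here I would single out the element $\eta_r\in\lie{a}$ and compute the eigenvalues of $\ad(\eta_r)$ on the root spaces using $\alpha_k(\eta_l)=-\delta_{kl}$. One finds that $\ad(\eta_r)$ acts by $-1$ on $\lie{s}_{\alpha_r}$, by $-\tfrac12$ on $\lie{s}_{\alpha_r/2}$ and on each $\lie{s}_{(\alpha_r\pm\alpha_k)/2}$ with $k<r$, and by $0$ on every remaining root space and on $\lie{a}$; in particular no positive eigenvalue occurs. Writing $\lie{n}^-$ for the sum of the strictly negative eigenspaces, the relation $[\lie{s}_\lambda,\lie{s}_\mu]\subset\lie{s}_{\lambda+\mu}$ immediately gives $[\lie{s},\lie{n}^-]\subset\lie{n}^-$, so $\lie{n}^-$ is an ideal; and since $[\,\cdot\,,\eta_r]$ sends an $\ad(\eta_r)$--eigenvector of eigenvalue $\mu$ to the multiple $-\mu$ of itself (vanishing when $\mu=0$), we get $[\lie{s},\mbb{R}\eta_r]\subset\lie{n}^-$. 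As $\lie{b}=\mbb{R}\eta_r\oplus\lie{n}^-$ is precisely the list in the statement, this proves $[\lie{s},\lie{b}]\subset\lie{b}$.

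The $j$--invariance of $\lie{b}$ I would read off from Proposition~\ref{Prop:Finestructure}~(3)--(5): $j$ interchanges $\mbb{R}\eta_r$ with $\lie{s}_{\alpha_r}=\mbb{R}\xi_r$ and each $\lie{s}_{(\alpha_r+\alpha_k)/2}$ with $\lie{s}_{(\alpha_r-\alpha_k)/2}$, and preserves $\lie{s}_{\alpha_r/2}$, so $j\lie{b}=\lie{b}$. I would emphasize that this computation is the main obstacle only in the sense of bookkeeping: once the eigenvalues of $\ad(\eta_r)$ are in hand, everything reduces to the absence of positive eigenvalues.

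Finally I would assemble the conclusions. Since $\lie{s}'$ is a subalgebra (previous lemma) and $\lie{b}$ is a complementary ideal, the projection $\pi$ along $\lie{b}$ is a Lie algebra homomorphism; because both $\lie{s}'$ and $\lie{b}$ are $j$--invariant, $\pi$ commutes with $j$ and is therefore a homomorphism of normal $j$--algebras with $\ker\pi=\lie{b}$. The first isomorphism theorem then yields $\lie{s}/\lie{b}\cong\lie{s}'$. Being the kernel of a $j$--algebra homomorphism, $\lie{b}$ is a $j$--invariant ideal, and restricting $j$ and the form $\omega$ to $\lie{b}$ equips it with the structure of a normal $j$--algebra exactly as in the proof of the preceding lemma: $\lie{b}$ is split solvable, $j$--invariant, the integrability identity~\eqref{Eqn:Integrability} holds on $\lie{b}$ since it holds on $\lie{s}$, and $\langle\cdot,\cdot\rangle_{\omega|_\lie{b}}$ is the restriction of a $j$--invariant inner product, hence again one.
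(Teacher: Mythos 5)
Your proposal is correct and follows essentially the same route as the paper: identify $\ker\pi$ with the orthogonal complement of $\lie{s}'$ using Proposition~\ref{Prop:Finestructure}~(1), verify bracket compatibility from the root space decomposition, and deduce $\pi\circ j=j\circ\pi$ from the $j$--invariance of $\lie{s}'$ and $\lie{b}$. Your $\ad(\eta_r)$--eigenvalue argument (no positive eigenvalues occur, so $\mbb{R}\eta_r$ plus the strictly negative eigenspaces form an ideal) is a clean way of carrying out what the paper compresses into ``one checks directly,'' and your explicit computation that the $\eta_k$ are pairwise orthogonal supplies a detail that the paper's bare citation of Proposition~\ref{Prop:Finestructure}~(1) leaves implicit, since that proposition concerns orthogonality between the summands of the root space decomposition rather than within $\lie{a}$.
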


\begin{proof}
Using properties of the root space decomposition one checks directly that the
map $\pi$ preserves the Lie brackets. The kernel of $\pi$ is given by the
orthogonal complement of $\lie{s}'$ in $\lie{s}$ with respect to $\langle\cdot,
\cdot\rangle_\omega$ which in turn coincides with $\lie{b}$ by
Proposition~\ref{Prop:Finestructure}~(1). Since $\lie{s}'$ and $\lie{b}$ are
$j$--invariant, it follows that $\pi\circ j=j\circ\pi$ holds. This finishes
the proof.
\end{proof}

\begin{lemma}
The normal $j$--algebra $(\lie{b},j)$ is isomorphic to the normal
$j$--algebra of the unit ball.
\end{lemma}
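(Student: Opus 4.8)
The plan is to recognize $(\lie{b},j)$ as a rank-one normal $j$--algebra and then to use the one-to-one correspondence between normal $j$--algebras and bounded homogeneous domains to identify it with a ball. First I would single out $\mbb{R}\eta_r$ as the maximal abelian subalgebra $\lie{a}_\lie{b}$ of semi-simple elements of $\lie{b}$: every other summand of $\lie{b}$ is a root space of $\lie{s}$ and hence lies in the nilradical, so $\lie{a}_\lie{b}=\mbb{R}\eta_r$ and $\lie{b}$ has rank one. Using the duality $\alpha_i(\eta_j)=-\delta_{ij}$ I would compute the eigenvalues of $\ad(\eta_r)$ on the summands of $\lie{b}$, obtaining $-1$ on $\lie{s}_{\alpha_r}=\mbb{R}\xi_r$ and $-\tfrac12$ on each of $\lie{s}_{\alpha_r/2}$ and $\lie{s}_{(\alpha_r\pm\alpha_k)/2}$. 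Thus, relative to $\lie{a}_\lie{b}$, the algebra $\lie{b}$ carries a single long root with one-dimensional root space $\mbb{R}\xi_r$ and a half root whose root space is
\[
\lie{h}:=\lie{s}_{\alpha_r/2}\oplus\bigoplus_{k=1}^{r-1}\bigl(\lie{s}_{(\alpha_r+\alpha_k)/2}\oplus\lie{s}_{(\alpha_r-\alpha_k)/2}\bigr).
\]

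Next I would match these data with the generators $\delta,\zeta,\xi_k,\eta_k$ of the normal $j$--algebra of the ball. From $\xi_r=-j\eta_r$ one gets $j\xi_r=\eta_r$, so $j$ interchanges the lines $\mbb{R}\eta_r$ and $\mbb{R}\xi_r$ exactly as $j\zeta=\delta$; moreover $[\eta_r,\xi_r]=\alpha_r(\eta_r)\xi_r=-\xi_r$ and $[\eta_r,u]=-\tfrac12 u$ for $u\in\lie{h}$ reproduce the relations $[\delta,\zeta]=-\zeta$ and $[\delta,\xi_k]=-\tfrac12\xi_k$. By parts~(4) and~(5) of Proposition~\ref{Prop:Finestructure} the space $\lie{h}$ is $j$--invariant, since $j$ preserves $\lie{s}_{\alpha_r/2}$ and exchanges $\lie{s}_{(\alpha_r-\alpha_k)/2}$ with $\lie{s}_{(\alpha_r+\alpha_k)/2}$.

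It then remains to exhibit the Heisenberg structure on $\lie{h}\oplus\mbb{R}\xi_r$. Because $\lie{b}$ is an ideal in $\lie{s}$, for $u,v\in\lie{h}$ the bracket $[u,v]$ stays in $\lie{b}$; since $\ad(\eta_r)$ acts on it by the eigenvalue $-1$ and the only eigenvalue-$(-1)$ summand of $\lie{b}$ is $\mbb{R}\xi_r$, I conclude $[\lie{h},\lie{h}]\subseteq\mbb{R}\xi_r$. Writing $[u,v]=B(u,v)\,\xi_r$ defines an alternating form $B$ on $\lie{h}$, and its nondegeneracy follows from positivity of $\langle\cdot,\cdot\rangle_\omega$: for $u\neq0$ the $j$--invariance of $\lie{h}$ gives $ju\in\lie{h}$ and $\omega(\xi_r)\,B(ju,u)=\langle u,u\rangle_\omega>0$. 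Hence $\lie{h}\oplus\mbb{R}\xi_r$ is a Heisenberg algebra with center $\mbb{R}\xi_r$, just as for the ball.

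Alternatively, and more cleanly, I would observe that since $\lie{b}$ has rank one its grading has the one-dimensional piece $\lie{s}_{-1}=\mbb{R}\xi_r$; the cone $\Omega=\Ad(S_0)\xi_r$ is then a regular cone in $\mbb{R}$, hence a half-line, so the associated $\Omega$--Hermitian form $\Phi$ is a positive-definite Hermitian form and the corresponding Siegel domain is an upper half-space, biholomorphic to a ball. I expect the bookkeeping for $\lie{h}$ and its $j$--invariance to be the only genuine work; once rank one and the positivity of $\Phi$ are in place, the identification with the ball is forced by the correspondence between normal $j$--algebras and bounded homogeneous domains.
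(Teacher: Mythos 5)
Your proposal is correct and takes essentially the same route as the paper: first show that $\mbb{R}\eta_r$ is maximal abelian so that $\lie{b}$ has rank one, then verify that the nilpotent part $[\lie{b},\lie{b}]=\lie{h}\oplus\mbb{R}\xi_r$ is a Heisenberg algebra with center $\mbb{R}\xi_r=\lie{s}_{\alpha_r}$, the bracket inducing a symplectic form on the half-root space. Your $\ad(\eta_r)$--eigenvalue bookkeeping and the positivity argument $\omega(\xi_r)\,B(ju,u)=\langle u,u\rangle_\omega>0$ are exactly the computations the paper compresses into ``one checks,'' so your write-up supplies the omitted details (plus the Siegel-domain identification of the rank-one case with the ball) rather than a genuinely different argument.
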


\begin{proof}
One computes directly
\begin{equation*}
[\lie{b},\lie{b}]=\lie{s}_{\alpha_r}\oplus\bigoplus_{k=1}^{r-1}\lie{s}_{
(\alpha_r+\alpha_k)/2}\oplus\lie{s}_{\alpha_r/2}\oplus
\bigoplus_{k=1}^{r-1}\lie{s}_{(\alpha_r-\alpha_k)/2}.
\end{equation*}
Hence, $\mbb{R}\eta_r$ is maximal Abelian in $\lie{b}$ and in particular
$\lie{b}$ has rank one. The claim will follow if we show that
$[\lie{b},\lie{b}]$ is a Heisenberg algebra. For this one checks that
$\lie{s}_{\alpha_r}$ is the center of $[\lie{b},\lie{b}]$ and that the Lie
bracket
\begin{equation*}
[\cdot,\cdot]\colon\lie{b}_{-1/2}\times\lie{b}_{-1/2}\to\lie{s}_{\alpha_r}
\end{equation*}
defines a symplectic form on
\begin{equation*}
\lie{b}_{-1/2}:=\bigoplus_{k=1}^{r-1}
\lie{s}_{(\alpha_r+\alpha_k)/2}\oplus\lie{s}_{\alpha_r/2}\oplus\\
\bigoplus_{k=1}^{r-1} \lie{s}_{(\alpha_r-\alpha_k)/2}.\qedhere
\end{equation*}
\end{proof}

\begin{lemma}\label{Lem:Semidirect}
Let $\widehat{\pi}\colon S\to S'\cong S/B_m$ be the homomorphism on the group
level. The short exact sequence $1\to B_m\to S\to S'\to 1$ splits, i.\,e.\ $S$
is isomorphic to $S'\ltimes B_m$.
\end{lemma}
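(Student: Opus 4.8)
The plan is to lift the Lie-algebra level splitting to the group level, using the simple-connectedness that comes for free from the split solvable structure. Recall from the preceding lemmas that $\lie{b}$ is a $j$--invariant ideal of $\lie{s}$, that $\lie{s}'$ is a subalgebra, and that the orthogonal projection $\pi\colon\lie{s}\to\lie{s}$ onto $\lie{s}'$ is a homomorphism of Lie algebras with kernel $\lie{b}$. Since $\pi$ is a projection \emph{onto} $\lie{s}'$, it restricts to the identity there, so the inclusion $\lie{s}'\hookrightarrow\lie{s}$ is a Lie algebra homomorphism which is a section of $\pi$. Because $\lie{b}$ is an ideal and $B_m$ is the corresponding analytic subgroup, $B_m$ is a closed normal subgroup of $S$ (closedness by part~(3) of the structure theorem for split solvable groups), so $\widehat{\pi}\colon S\to S'\cong S/B_m$ is a genuine homomorphism of Lie groups with differential $\pi$.

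First I would construct the section at the group level. The group $S'$ is simply-connected with Lie algebra $\lie{s}'$, so the Lie algebra homomorphism $\lie{s}'\hookrightarrow\lie{s}$ integrates to a unique homomorphism $\sigma\colon S'\to S$, whose image is precisely the analytic subgroup of $S$ with Lie algebra $\lie{s}'$. Next, to verify that $\sigma$ splits $\widehat{\pi}$, I would compare the two homomorphisms $\widehat{\pi}\circ\sigma$ and $\id_{S'}$ from $S'$ to $S'$. Their differentials at the identity coincide, since $d(\widehat{\pi}\circ\sigma)=\pi|_{\lie{s}'}=\id_{\lie{s}'}$. As $S'$ is connected and simply-connected, a homomorphism out of $S'$ is determined by its differential, whence $\widehat{\pi}\circ\sigma=\id_{S'}$. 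Thus $\sigma$ is a homomorphic section of $\widehat{\pi}$, the sequence $1\to B_m\to S\to S'\to 1$ splits, and $S\cong S'\ltimes B_m$.

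The main (and essentially only) subtlety is that a splitting of Lie algebras need not integrate to a splitting of Lie groups in general: one needs topological control to both integrate the algebra section $\lie{s}'\hookrightarrow\lie{s}$ and to conclude the identity $\widehat{\pi}\circ\sigma=\id_{S'}$ from the equality of differentials. Here that control is automatic, because the structure theory of split solvable groups guarantees that $S$ and all of its connected subgroups — in particular $S'$ — are simply-connected. This is exactly the ingredient that turns the algebraic decomposition $\lie{s}=\lie{s}'\ltimes\lie{b}$ into the desired group-level semidirect product, so I do not expect any computational difficulty beyond the bookkeeping already recorded in the earlier lemmas.
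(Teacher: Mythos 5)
Your proof is correct and is precisely the argument the paper compresses into a single line: the paper's proof just cites that $\lie{s}'\hookrightarrow\lie{s}$ is a Lie algebra homomorphism and a section to $\pi$, leaving the integration to the group level (via simple connectedness and uniqueness of homomorphisms with a given differential) implicit, which is exactly what you carry out. One cosmetic remark: when $S'$ is read as the quotient $S/B_m$ rather than as the analytic subgroup of $S$ with Lie algebra $\lie{s}'$, its simple connectedness follows from the homotopy sequence of the fibration $B_m\to S\to S/B_m$ (or from the fact that a quotient of a split solvable group by a closed connected normal subgroup is again split solvable) rather than from the structure theorem on connected subgroups, but since both models are simply connected with Lie algebra $\lie{s}'$ they are isomorphic and your argument goes through verbatim.
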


\begin{proof}
The claim follows from the fact that $\lie{s}'\hookrightarrow\lie{s}$ is a
homomorphism of Lie algebras and a section to $\pi$.
\end{proof}

\subsection{Geometric realization of the fibration}

In this subsection we view $\pi$ as a map $\lie{s}_{-1}^\mbb{C}\times
\lie{s}_{-1/2}\to(\lie{s}'_{-1})^\mbb{C}\times\lie{s}'_{-1/2}$ by restriction
and $\mbb{C}$--linear extension.

\begin{lemma}\label{Lem:ImageDomain}
The map $\pi$ maps $D_\lie{s}$ into $D_{\lie{s}'}$.
\end{lemma}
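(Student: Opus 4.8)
The plan is to track each ingredient of the defining inequality of $D_\lie{s}$ under $\pi$ and to reduce everything to a single statement about cones. First I would record how $\pi$ interacts with the product structure. Since the root space decomposition~\eqref{Eqn:RootSpaceDecomp} is orthogonal by Proposition~\ref{Prop:Finestructure}~(1) and each graded piece is a sum of root spaces, the projection $\pi$ carries every root space either to itself or to $0$; in particular it maps $\lie{s}_{-1}$ onto $\lie{s}'_{-1}$ and $\lie{s}_{-1/2}$ onto $\lie{s}'_{-1/2}$, so $\pi(z,w)=(\pi z,\pi w)$ with $\pi$ acting on the first factor by the $\mbb{C}$--linear extension of the real projection $\lie{s}_{-1}\to\lie{s}'_{-1}$. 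Because that extension is $\mbb{C}$--linear and agrees with the real projection on $\lie{s}_{-1}$, it commutes with taking imaginary parts, whence $\im(\pi z)=\pi(\im z)$.

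Next I would compare the two Hermitian forms. As $\lie{s}'$ is a subalgebra and $j'=j|_{\lie{s}'}$, the form $\Phi'$ is given by the very same expression $\tfrac14([j\,\cdot\,,\cdot]+i[\cdot,\cdot])$ as $\Phi$, now evaluated on $\lie{s}'_{-1/2}$. Using that $\pi$ is a homomorphism of normal $j$--algebras, so that $\pi[\cdot,\cdot]=[\pi\cdot,\pi\cdot]$ and $\pi\circ j=j\circ\pi$, together with $[w,w]=0$, I would obtain
\begin{equation*}
\Phi'(\pi w,\pi w)=\tfrac14[j\pi w,\pi w]=\tfrac14\pi[jw,w]=\pi\bigl(\Phi(w,w)\bigr),
\end{equation*}
where $\Phi(w,w)=\tfrac14[jw,w]\in\lie{s}_{-1}$ is real, so that applying $\pi$ means applying the real projection. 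Combining this with the previous paragraph gives, for every $(z,w)\in\lie{s}_{-1}^\mbb{C}\times\lie{s}_{-1/2}$,
\begin{equation*}
\im(\pi z)-\Phi'(\pi w,\pi w)=\pi\bigl(\im(z)-\Phi(w,w)\bigr).
\end{equation*}

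It remains to prove the cone inclusion $\pi(\Omega)\subset\Omega'$; this is the one genuinely structural point, the rest being formal. Here I would use $\Omega=\Ad(S_0)\xi$ with $\xi=\xi_1+\dotsb+\xi_r$ and the analogous $\Omega'=\Ad(S_0')\xi'$ with $\xi'=\xi_1+\dotsb+\xi_{r-1}$, where $S_0'$ is the analytic subgroup with Lie algebra $\lie{s}'_0$. Since $\xi_r=-j\eta_r$ and $\eta_r$ lie in the kernel $\lie{b}$ of $\pi$ while $\xi_1,\dotsc,\xi_{r-1}\in\lie{s}'$, one has $\pi(\xi)=\xi'$. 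Passing to the group homomorphism $\widehat\pi\colon S\to S'$ whose differential is $\pi$, the naturality relation $\pi\circ\Ad(s)=\Ad(\widehat\pi(s))\circ\pi$ and the surjectivity of $\widehat\pi|_{S_0}\colon S_0\to S_0'$ (its differential $\pi\colon\lie{s}_0\to\lie{s}'_0$ is onto) give
\begin{equation*}
\pi(\Omega)=\pi\bigl(\Ad(S_0)\xi\bigr)=\Ad(S_0')\,\pi(\xi)=\Ad(S_0')\xi'=\Omega'.
\end{equation*}

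Finally I would conclude: for $(z,w)\in D_\lie{s}$ we have $\im(z)-\Phi(w,w)\in\Omega$, hence by the displayed identity $\im(\pi z)-\Phi'(\pi w,\pi w)=\pi\bigl(\im(z)-\Phi(w,w)\bigr)\in\pi(\Omega)=\Omega'$, which is exactly the membership condition $\pi(z,w)\in D_{\lie{s}'}$. I expect the only step requiring real care to be the cone identity $\pi(\Omega)=\Omega'$, precisely because it is where the homomorphism property of $\pi$ must be combined with the group-theoretic (non-linear) generation of $\Omega$; by contrast, the splitting of the defining inequality and the two equalities $\im(\pi z)=\pi(\im z)$ and $\Phi'(\pi w,\pi w)=\pi(\Phi(w,w))$ are immediate consequences of orthogonality of the root decomposition, $\mbb{C}$--linearity, and the fact that $\pi$ is a homomorphism commuting with $j$.
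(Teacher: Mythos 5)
Your proof is correct and follows essentially the same route as the paper's: you show $\pi$ sends the base point $\xi_0$ to $\xi_0'$, use the induced group homomorphism $\widehat\pi$ and the naturality relation $\pi\circ\Ad(s)=\Ad(\widehat\pi(s))\circ\pi$ to get $\pi(\Omega)=\Omega'$, and use that $\pi$ intertwines $j$ and the brackets to get $\Phi'(\pi w,\pi w)=\pi\Phi(w,w)$. The only difference is that you spell out the formal steps (the splitting $\pi(z,w)=(\pi z,\pi w)$, $\im(\pi z)=\pi(\im z)$, and surjectivity of $\widehat\pi|_{S_0}$) that the paper leaves implicit.
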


\begin{proof}
First we note that $\pi$ maps the base point $\xi_0=\xi_1+\dotsb+\xi_r$ onto the
base point $\xi_0'=\xi_1+\dotsb+\xi_{r-1}$. Since $\pi\colon\lie{s}\to\lie{s}'$
is a homomorphism of Lie algebras, it gives rise to a unique morphism
$\widehat{\pi}\colon S\to S'$ between the corresponding Lie groups such that
\begin{equation*}
\pi\bigl(\Ad(s)\xi\bigr)=\Ad\bigl(\widehat{\pi}(s)\bigr)\pi(\xi)
\end{equation*}
holds. Since $\pi$ also respects the grading of $\lie{s}$ and $\lie{s}'$, we
conclude that $\pi$ maps the cone $\Omega=\Ad(S_0)\xi_0$ onto the cone
$\Omega'=\Ad(S_0')\xi_0'$. Since the $\Omega$--Hermitian form $\Phi$ is
determined by the complex structure $J$ and the Lie bracket of $\lie{s}$ which
both are respected by $\pi$, we obtain
$\Phi'\bigl(\pi(\xi),\pi(\xi')\bigr)=\pi\Phi(\xi, \xi')$. This proves the claim.
\end{proof}

Choosing the base point $z_0:=(i\xi_0,0)\in D_\lie{s}$ we obtain the
diffeomorphism $S\to D_\lie{s}$, $s\mapsto s\cdot z_0$. Equipping $S$ with the
left invariant extension $J$ of $j$ this diffeomorphism becomes biholomorphic
(see Lemma~1.2 in~\cite{Is}). Let $B$ be the normal subgroup of $S$ with Lie
algebra $\lie{b}$ and let $S'$ be the analytic subgroup with Lie algebra
$\lie{s}'$. Note that $S'$ is isomorphic to $S/B$ via $\widehat{\pi}\colon S\to
S'$. The base point $z_0':=\pi(z_0)$ yields the isomorphism $S'\to
D_{\lie{s}'}$. Now we are in position to prove the main result of this section.

\begin{proposition}\label{Prop:GeomRealization}
The following diagram commutes:
\begin{equation*}
\xymatrix{
S\ar[r]^{\cong}\ar[d]_{\widehat{\pi}} & D_\lie{s}\ar[d]^{\pi}\\
S'\ar[r]_{\cong} & D_{\lie{s}'}.
}
\end{equation*}
It follows that $\pi\colon D_\lie{s}\to D_{\lie{s}'}$ is an $S$--equivariant
holomorphic submersion whose fibers are isomorphic to the unit ball.
\end{proposition}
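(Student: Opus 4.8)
The plan is to reduce everything to the explicit action formula~\eqref{Eqn:Action} together with the compatibility properties of $\pi$ already recorded in the proof of Lemma~\ref{Lem:ImageDomain}. The two horizontal arrows are the orbit maps $s\mapsto s\cdot z_0$ and $s'\mapsto s'\cdot z_0'$, which are biholomorphisms by the discussion preceding the statement, and $z_0'=\pi(z_0)$ by construction. Hence commutativity of the square is precisely the assertion that $\pi(s\cdot z_0)=\widehat{\pi}(s)\cdot z_0'=\widehat{\pi}(s)\cdot\pi(z_0)$ for every $s\in S$, i.\,e.\ that $\pi$ intertwines the $S$--action on $D_\lie{s}$ with the $S'$--action on $D_{\lie{s}'}$ via $\widehat{\pi}$.

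First I would verify this equivariance by a direct computation. Writing a general $s\in S$ as $s=\exp(\xi+\xi')s_0$ with $\xi\in\lie{s}_{-1}$, $\xi'\in\lie{s}_{-1/2}$ and $s_0\in S_0$, I apply $\pi$ to the right-hand side of~\eqref{Eqn:Action} for $s\cdot(z,w)$. Since $\pi$ respects the grading, commutes with the adjoint action in the sense $\pi\bigl(\Ad(s_0)\,\cdot\bigr)=\Ad\bigl(\widehat{\pi}(s_0)\bigr)\pi(\cdot)$, and satisfies $\pi\Phi(\cdot,\cdot)=\Phi'(\pi\,\cdot,\pi\,\cdot)$, each of the four summands of the action formula is carried to the corresponding term of the $S'$--action formula evaluated at $\pi(z,w)$ with group element $\widehat{\pi}(s)=\exp(\pi\xi+\pi\xi')\widehat{\pi}(s_0)$; here I use that $\widehat{\pi}$ is a Lie-group homomorphism inducing $\pi$ on Lie algebras. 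This yields $\pi\bigl(s\cdot(z,w)\bigr)=\widehat{\pi}(s)\cdot\pi(z,w)$, and specializing to $(z,w)=z_0$ gives commutativity of the diagram.

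With commutativity in hand, the remaining assertions follow formally. Holomorphy of $\pi$ is immediate, since $\pi$ is the restriction of a $\mbb{C}$--linear map. For the submersion property I read off from the diagram that $\pi$ equals the composite of the biholomorphism $D_\lie{s}\cong S$, the homomorphism $\widehat{\pi}\colon S\to S'$, and the biholomorphism $S'\cong D_{\lie{s}'}$; since $\widehat{\pi}$ is a surjective homomorphism of Lie groups with kernel $B$ by Lemma~\ref{Lem:Semidirect}, it is a surjective submersion, and hence so is $\pi$. Finally, the fiber $\pi^{-1}(z_0')$ corresponds under $D_\lie{s}\cong S$ to $\widehat{\pi}^{-1}(e)=B$, using that $S'$ acts freely on $D_{\lie{s}'}$; as $\lie{b}$ is $j$--invariant, the biholomorphism $S\cong D_\lie{s}$ restricts to a biholomorphism of $B$, carrying its left-invariant complex structure, onto this fiber, and $B$ is the bounded homogeneous domain attached to the normal $j$--algebra $(\lie{b},j)$, which is the unit ball. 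The $S$--equivariance then identifies every fiber with this one, so all fibers are biholomorphic to $\mbb{B}_m$.

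The step I expect to require the most care is the equivariance computation: one must track the four summands in~\eqref{Eqn:Action} and apply the correct compatibility property to each, in particular handling the two $\Phi$--terms, where both the $\Ad$--intertwining and the $\Phi\leftrightarrow\Phi'$ intertwining are used simultaneously. The identification of the fiber with $B$ is the other point to get right conceptually, since it relies on $\lie{b}$ being a $j$--invariant subalgebra so that the fiber is a complex submanifold carrying exactly the normal $j$--algebra structure of the unit ball.
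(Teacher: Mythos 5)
Your proof is correct and follows essentially the same route as the paper: the paper likewise reduces the diagram to the equivariance identity $\pi(s\cdot z_0)=\widehat{\pi}(s)\cdot z_0'$, notes it holds on $S_0$ from the proof of Lemma~\ref{Lem:ImageDomain}, and verifies it on all of $S$ via the explicit action formula~\eqref{Eqn:Action}. You merely spell out the computation and the concluding identifications (submersion via $\widehat{\pi}$, fiber $\cong B\cong\mbb{B}_m$ through the $j$--invariance of $\lie{b}$) which the paper leaves implicit in its ``It follows that'' clause.
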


\begin{proof}
We have to show that
\begin{equation*}
\pi(s\cdot z_0)=\widetilde{\pi}(s)\cdot z_0'
\end{equation*}
holds for all $s\in S$. In the proof of Lemma~\ref{Lem:ImageDomain} we have
already seen that this holds true for $s\in S_0$. Using the explicit
formula~\eqref{Eqn:Action} for the $S$--action on $D_\lie{s}$ one verifies the
claim for the whole group $S$.
\end{proof}

\begin{remark}
Let $\pi\colon D\to D'$ be the $S$--equivariant holomorphic submersion whose
fibers are biholomorphically equivalent to $\mbb{B}_m$. It follows from
Proposition~\ref{Prop:GeomRealization} that $\pi$ is a smooth principal bundle
with group $B_m$. If $\pi$ was a holomorphic fiber bundle, then by a result of
Royden (\cite{Roy}) it would be holomorphically trivial, i.\,e.\ $D\cong
D'\times\mbb{B}_m$. This shows that $\pi$ admits in general no local holomorphic
trivializations.
\end{remark}

\section{Proof of the main theorem}

\subsection{Equivariant fiber bundles}

In this subsection we present an auxiliary result concerning the quotient of an
equivariant fiber bundle. Since it seems to be hard to find an explicit
reference for it, we give a proof here. 

\begin{proposition}\label{Prop:EquivariantFiberBundle}
Let $p\colon B\to X$ be a fiber bundle with typical fiber $F$ and structure
group $S$. Let $G$ be a group acting on $B$ by bundle automorphisms. We assume
that the induced $G$--action on $X$ is free and proper. Then $G$ acts freely and
properly on $B$, and hence we obtain the commutative diagram
\begin{equation*}
\xymatrix{
B\ar[d]_p\ar[r] & B/G\ar[d]^{\overline{p}}\\
X\ar[r] & X/G.
}
\end{equation*}
The induced map $\overline{p}\colon B/G\to X/G$ is again a fiber bundle with the
same typical fiber and structure group.
\end{proposition}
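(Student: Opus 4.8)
The plan is to verify the three assertions in turn: freeness of the $G$--action on $B$, properness of this action, and local triviality of the induced map $\overline{p}$.

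First I would dispose of freeness and the descent to the quotient. Suppose $g\cdot b=b$ for some $g\in G$ and $b\in B$. Since $G$ acts by bundle automorphisms, the projection $p$ is equivariant, so applying $p$ gives $g\cdot p(b)=p(g\cdot b)=p(b)$. As the $G$--action on $X$ is free, this forces $g$ to be the identity, so $G$ acts freely on $B$. Because the actions on $B$ and $X$ are free and proper, both quotient maps $q_B\colon B\to B/G$ and $q_X\colon X\to X/G$ are principal $G$--bundles, and the equivariance of $p$ yields the induced map $\overline{p}\colon B/G\to X/G$ making the square commute.

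For properness I would use the characterization that the map $\Theta\colon G\times B\to B\times B$, $(g,b)\mapsto(g\cdot b,b)$, is proper. Let $C\subset B\times B$ be compact and choose a compact set $K\subset B$ with $C\subset K\times K$. Set $L:=p(K)$, which is compact in $X$. If $(g,b)\in\Theta^{-1}(K\times K)$, then $b\in K$ and $g\cdot b\in K$, whence $p(b)\in L$ and $g\cdot p(b)=p(g\cdot b)\in L$; thus $g$ lies in $G_L:=\{h\in G;\ h\cdot L\cap L\not=\emptyset\}$. Properness of the action on $X$ makes $G_L$ relatively compact, so $\Theta^{-1}(K\times K)$ is a closed subset of the compact set $\overline{G_L}\times K$ and hence compact. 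Since $\Theta^{-1}(C)$ is closed in $\Theta^{-1}(K\times K)$, it is compact as well, proving that $G$ acts properly on $B$.

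The heart of the argument, and the step I expect to be the main obstacle, is producing local trivializations of $\overline{p}$ that preserve the fiber $F$ and the structure group $S$. Since $q_X\colon X\to X/G$ is a principal $G$--bundle, every point of $X/G$ has a neighborhood $W$ over which it is trivial; choosing a continuous section $\sigma\colon W\to X$ I obtain a slice $\Sigma:=\sigma(W)$ meeting each $G$--orbit in $q_X^{-1}(W)$ exactly once. The restriction $p^{-1}(\Sigma)\to\Sigma$ is a fiber bundle with fiber $F$ and structure group $S$, being the pullback of $p$ along $\Sigma\hookrightarrow X$. I would then show that $p^{-1}(\Sigma)\times G\to p^{-1}(q_X^{-1}(W))$, $(b,g)\mapsto g\cdot b$, is a $G$--equivariant homeomorphism: surjectivity and injectivity follow from the slice property together with freeness on $X$, while continuity of the inverse follows from the trivialization $q_X^{-1}(W)\cong W\times G$, since the $G$--component of $p(b)$ depends continuously on $b$. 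Passing to $G$--quotients identifies $\overline{p}^{-1}(W)$ with $p^{-1}(\Sigma)$ and $\overline{p}$ with $p|_{p^{-1}(\Sigma)}$, exhibiting $\overline{p}$ locally as a fiber bundle with fiber $F$. Finally, comparing two slices over overlapping neighborhoods, the transition functions of $\overline{p}$ coincide with those of $p$ on the corresponding slices and hence take values in $S$; this shows $\overline{p}$ is globally a fiber bundle with typical fiber $F$ and structure group $S$. The delicate points are verifying that the slice meets each orbit exactly once and that the product decomposition is a homeomorphism, both of which rest on the freeness and properness established above.
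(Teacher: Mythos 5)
Your overall route is the same as the paper's: both proofs produce a slice for the $G$--action on $X$, identify $\overline{p}$ over the corresponding neighborhood in $X/G$ with the restriction of $p$ to that slice, and then treat the transition functions. Your verifications of freeness and properness of the action on $B$ are correct (the paper takes these for granted), and your hands-on proof that $p^{-1}(\Sigma)\times G\to p^{-1}(q_X^{-1}(W))$ is a $G$--equivariant homeomorphism is a sound substitute for the paper's appeal to Palais~\cite{Pa2}; note only that your assertion that $q_X\colon X\to X/G$ is a principal bundle is itself equivalent to the existence of slices and needs the same reference --- free plus proper alone does not give local sections in full topological generality.

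There is, however, a genuine gap in the final step. The transition functions of $\overline{p}$ do \emph{not} coincide with transition functions of $p$. If $\Sigma_i=\sigma_i(W_i)$, $i=1,2$, are slices over overlapping charts and $w\in W_1\cap W_2$, the two charts identify the fiber $\overline{p}^{-1}(w)$ with $p^{-1}\bigl(\sigma_1(w)\bigr)$ and with $p^{-1}\bigl(\sigma_2(w)\bigr)$, which are \emph{different} fibers of $p$; they are compared through the action of the group element $\gamma(w)\in G$ determined by $\sigma_2(w)=\gamma(w)\cdot\sigma_1(w)$, where $\gamma$ is continuous by the principal bundle structure of $q_X$. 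So the transition function of $\overline{p}$ at $w$ is a transition function of $p$ composed with the fiber map that $\gamma(w)$ induces from $p^{-1}\bigl(\sigma_1(w)\bigr)$ to $p^{-1}\bigl(\sigma_2(w)\bigr)$. That this fiber map is given by an element of $S$ is exactly the hypothesis that $G$ acts by \emph{bundle} automorphisms, i.e.\ automorphisms compatible with the structure group --- this is precisely where the paper invokes that hypothesis, whereas you use it only to obtain equivariance of $p$. The hypothesis cannot be bypassed: take $S$ trivial (so a bundle with structure group $S$ is canonically a product), $B=\mbb{R}\times F$ with $F$ a two-point set, and let $G=\mbb{Z}$ act by $n\cdot(x,y)=(x+n,f^n(y))$ with $f$ the transposition of $F$; the action is fiber preserving and covers a free proper action, yet $B/G\to\mbb{R}/\mbb{Z}$ is the connected double cover of the circle, whose structure group cannot be reduced to the trivial group. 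Once you insert the observation that each $\gamma(w)$ acts on fibers through $S$ in the given trivializations of $p$, your argument is complete and agrees with the paper's.
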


\begin{proof}
We prove first that the map  $\overline{p}\colon B/\Gamma\to X/\Gamma$ admits
local trivializations. To see this let $U\subset X$ be an open set such that
there exists a trivialization $\varphi\colon U\times F\to p^{-1}(U)$. Shrinking
$U$ if necessary, we may assume that there exists a slice for the $G$--action
on $\widehat{U}:=G\cdot U$, i.\,e.\ that $\widehat{U}$ is $G$--equivariantly
isomorphic to $G\times S$ where $G$ acts on $G\times S$ by $g\cdot(g',x):=
(gg',x)$. It follows that $p^{-1}(S)$ is a slice for the $G$--action on
$p^{-1}(\widehat{U})$ (see~\cite{Pa2}), hence we obtain a $G$--equivariant
isomorphism $p^{-1}(\widehat{U})\cong_GG\times p^{-1}(S)$. Therefore the map
\begin{equation*}
\widehat{\varphi}\colon\widehat{U}\times F\to p^{-1}(\widehat{U}),\quad
\widehat{\varphi}(g\cdot x,y):=g\cdot\varphi(x,y),
\end{equation*}
with $g\in G$ and $x\in S$ is well-defined and hence a $G$--equivariant
trivialization. This implies that the map $\overline{p}\colon B/G\to X/G$ admits
local trivializations.

Since $G$ acts by bundle automorphisms on $B$, the transition functions between
different $G$--equivariant local trivializations $\widehat{\varphi}$ and
$\widehat{\psi}$ induce isomorphisms of $F$ given by the action of the
structure group $S$. Thus the structure group of the fiber bundle
$\overline{p}\colon B/G\to X/G$ is again given by $S$.
\end{proof}

\begin{example}
Let $G$ be a complex Lie group and let $H_1\subset H_2\subset G$ be closed
complex subgroups. According to Theorem~7.4 in~\cite{Ste} the natural map
$G/H_1\to G/H_2$ is a holomorphic fiber bundle with fiber $H_2/H_1$. The
structure group of this bundle is given by $H_2/(H_1)_0$ where $(H_1)_0$ denotes
the largest subgroup of $H_1$ which is normal in $H_2$. In particular, if $H_2$
is connected, then the structure group is connected. Moreover, the maps
$g'H_1\mapsto gg'H_1$, $g\in G$, are bundle automorphisms of $G/H_1\to G/H_2$.
Hence, we may apply Proposition~\ref{Prop:EquivariantFiberBundle} to any
subgroup $G'$ of $G$ which acts properly and freely on $G/H_2$ to obtain the
quotient bundle
\begin{equation*}
G'\backslash G/H_1\to G'\backslash G/H_2.
\end{equation*}
\end{example}

\subsection{Globalizing the submersion}

Let $D$ be a homogeneous Siegel domain and let $\pi\colon D\to D'$ be the
$S$--equivariant holomorphic submersion whose fibers are biholomorphically
equivalent to the unit ball $\mbb{B}_m$. By Lemma~\ref{Lem:TransitiveAction}
$S^\mbb{C}$ acts transitively on $S^\mbb{C}\cdot D=\mbb{C}^n$, hence we obtain
$S^\mbb{C}\cdot D=\mbb{C}^n\cong S^\mbb{C}/S^\mbb{C}_{z_0}$. Since $\mbb{C}^n$
is simply-connected, it follows that $S^\mbb{C}_{z_0}$ is connected. This
implies that $D^*:=S^\mbb{C}\cdot D$ is the universal globalization of the local
$S^\mbb{C}$--action on $D$. Similarly, $(D')^*:=(S')^\mbb{C}\cdot
D'=\mbb{C}^{n-m}$ is the universal globalization of the local
$(S')^\mbb{C}$--action on $D'$.

\begin{proposition}\label{Prop:BundleStructure}
There exists a unique $S^\mbb{C}$--equivariant map $\pi^*\colon D^*\to (D')^*$
which exhibits $D^*$ as a holomorphic fiber bundle over $(D')^*$ with typical
fiber $\mbb{H}_m^*=B_m^\mbb{C}\cdot\mbb{H}_m=\mbb{C}^m$. The structure group is
a connected complex Lie group.
\end{proposition}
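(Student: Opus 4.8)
The plan is to build $\pi^*$ from the group homomorphism $\widehat{\pi}$ and then read the bundle structure off the theorem of Steenrod quoted in the Example above. First I would complexify the splitting $S\cong S'\ltimes B_m$ of Lemma~\ref{Lem:Semidirect}: by functoriality of the universal complexification~(\cite{Ho}) the homomorphism $\widehat{\pi}\colon S\to S'$ extends to a surjective holomorphic homomorphism $\widehat{\pi}^\mbb{C}\colon S^\mbb{C}\to(S')^\mbb{C}$ with kernel $B_m^\mbb{C}$, and $S^\mbb{C}\cong(S')^\mbb{C}\ltimes B_m^\mbb{C}$. Composing $\pi$ with the inclusion $\iota'\colon D'\hookrightarrow(D')^*$ gives a map $\iota'\circ\pi\colon D\to(D')^*$ which is $S$--equivariant and holomorphic, hence locally $S^\mbb{C}$--equivariant once $S^\mbb{C}$ is made to act on $(D')^*$ through $\widehat{\pi}^\mbb{C}$ (here Proposition~\ref{Prop:GeomRealization} identifies $\pi$ with $\widehat{\pi}$ on the group level). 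Since $D^*$ is the universal globalization of the local $S^\mbb{C}$--action on $D$, its universal property produces a unique $S^\mbb{C}$--equivariant holomorphic map $\pi^*\colon D^*\to(D')^*$ extending $\pi$.

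Next I would exhibit $\pi^*$ as a projection between homogeneous spaces. Writing $z_0\in D$ for the base point and $z_0':=\pi(z_0)$, we have $D^*\cong S^\mbb{C}/S^\mbb{C}_{z_0}$ and $(D')^*\cong(S')^\mbb{C}/(S')^\mbb{C}_{z_0'}$. Set $H_1:=S^\mbb{C}_{z_0}$ and $H_2:=(\widehat{\pi}^\mbb{C})^{-1}\bigl((S')^\mbb{C}_{z_0'}\bigr)$. These are closed complex subgroups of $S^\mbb{C}$ with $H_1\subset H_2$, and $\widehat{\pi}^\mbb{C}$ identifies $S^\mbb{C}/H_2$ with $(D')^*$; under these identifications $\pi^*$ becomes the natural projection $S^\mbb{C}/H_1\to S^\mbb{C}/H_2$. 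The Example (Theorem~7.4 in~\cite{Ste}) then shows that $\pi^*$ is a holomorphic fiber bundle with typical fiber $H_2/H_1$ and structure group $H_2/(H_1)_0$, where $(H_1)_0$ is the largest subgroup of $H_1$ normal in $H_2$.

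It remains to identify the fiber and the structure group. Since $(D')^*=\mbb{C}^{n-m}$ is simply connected, the isotropy $(S')^\mbb{C}_{z_0'}$ is connected, and as $B_m^\mbb{C}$ is connected the extension $H_2\cong(S')^\mbb{C}_{z_0'}\ltimes B_m^\mbb{C}$ is connected as well; by the Example the structure group $H_2/(H_1)_0$ is therefore a connected complex Lie group. For the fiber, note that $H_2/H_1$ is the orbit $H_2\cdot z_0=(\pi^*)^{-1}(z_0')$, which contains $B_m^\mbb{C}\cdot z_0=\mbb{H}_m^*=\mbb{C}^m$. A dimension count—using that $S$ and $S'$ act simply transitively, so that $S^\mbb{C}_{z_0}$, $(S')^\mbb{C}_{z_0'}$ and $B_m^\mbb{C}$ have complex dimensions $n$, $n-m$ and $2m$—shows $\dim_\mbb{C}H_2/H_1=m$. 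Since $B_m^\mbb{C}=\ker\widehat{\pi}^\mbb{C}$ is normal in $S^\mbb{C}$, the group $H_2$ permutes the $B_m^\mbb{C}$--orbits inside the connected manifold $H_2\cdot z_0$; these orbits have full dimension $m$, hence are open, and connectedness forces $H_2\cdot z_0=B_m^\mbb{C}\cdot z_0=\mbb{C}^m$. Thus the typical fiber is the globalization $\mbb{H}_m^*$ of the $\pi$--fiber $\mbb{H}_m$, as asserted.

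The main obstacle I anticipate is the third step, namely matching the abstract Steenrod fiber $H_2/H_1$ with the geometric globalization $\mbb{H}_m^*$ of the fiber of $\pi$. Concretely, one must verify both that the $B_m^\mbb{C}$--orbit through $z_0$ already fills out the entire $\pi^*$--fiber—for which the normality of $B_m^\mbb{C}$ together with the dimension count is essential—and that this orbit is biholomorphic to $\mbb{C}^m$, which is exactly the computation $\mbb{H}_m^*=B_m^\mbb{C}\cdot\mbb{H}_m=\mbb{C}^m$ carried out for the unit ball in Section~3.
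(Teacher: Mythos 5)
Your proof is correct and follows essentially the same route as the paper: both realize $\pi^*$ as the projection of $S^\mbb{C}$--homogeneous spaces $S^\mbb{C}/S^\mbb{C}_{z_0}\to S^\mbb{C}/S^\mbb{C}_{z_0'}$, invoke Theorem~7.4 of~\cite{Ste} via the Example to get the holomorphic bundle structure and the connectedness of the structure group, and identify the typical fiber with the $B_m^\mbb{C}$--orbit $\mbb{H}_m^*=B_m^\mbb{C}\cdot\mbb{H}_m=\mbb{C}^m$. The only differences are expository: you derive existence and uniqueness of $\pi^*$ from the universal property of the globalization (via the complexified splitting $S^\mbb{C}\cong(S')^\mbb{C}\ltimes B_m^\mbb{C}$) rather than directly from the inclusion $S^\mbb{C}_{z_0}\subset S^\mbb{C}_{z_0'}$, and your normality/open-orbit/connectedness argument makes explicit the surjectivity of the map $B_m^\mbb{C}/(B_m^\mbb{C})_{z_0}\to S^\mbb{C}_{z_0'}/S^\mbb{C}_{z_0}$, which the paper's proof states more tersely.
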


\begin{proof}
Since the submersion $\pi\colon D\to D'$ is $S$--equivariant, we have $(D')^*=
(S')^\mbb{C}\cdot D'=S^\mbb{C}\cdot D'=\mbb{C}^{n-m}$. Since moreover $D^*\cong
S^\mbb{C}/S^\mbb{C}_{z_0}$ and $(D')^*\cong S^\mbb{C}/S^\mbb{C}_{z_0'}$, the
existence of $\pi^*$ follows from the fact that $S^\mbb{C}_{z_0}$ is contained
in $S^\mbb{C}_{z_0'}$. It is then immediate that $\pi^*\colon D^*\to (D')^*$ is
unique. Since $D^*$ and $(D')^*$ are simply-connected, the groups
$S^\mbb{C}_{z_0}$ and $S^\mbb{C}_{z_0'}$ are connected. Hence, it follows
from Theorem~7.4 in~\cite{Ste} that $D^*$ is a holomorphic fiber bundle over
$(D')^*$ with fiber $S^\mbb{C}_{z_0'}/S^\mbb{C}_{z_0}$ such that the structure
group is a connected complex Lie group. Since $B_m$ is a normal subgroup of $S$,
it lies in the $S$--isotropy of each point in $D'$. Hence, $B_m^\mbb{C}$ is a
normal subgroup of $S^\mbb{C}_{z_0'}$. Because of $S^\mbb{C}_{z_0}\cap
B_m^\mbb{C}=(B_m^\mbb{C})_{z_0}$ the inclusion $B_m^\mbb{C}\hookrightarrow
S^\mbb{C}_{z_0'}$ induces an isomorphism $B_m^\mbb{C}/(B_m^\mbb{C})_{z_0}\to
S^\mbb{C}_{z_0'}/S^\mbb{C}_{z_0}$ which proves that the fibers of $\pi^*$ are
isomorphic to $\mbb{H}_m^*=B_m^\mbb{C}\cdot\mbb{H}_m\cong
B_m^\mbb{C}/(B_m^\mbb{C})_{z_0}$.
\end{proof}

\begin{corollary}\label{Cor:RestrictedBundle}
We have $(\pi^*)^{-1}(D')=B_m^\mbb{C}\cdot D$. Hence, the restricted map
$\pi^*\colon B_m^\mbb{C}\cdot D\to D'$ is a holomorphic fiber bundle with
typical fiber $\mbb{H}_m^*$.
\end{corollary}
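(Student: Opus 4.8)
The plan is to identify $(\pi^*)^{-1}(D')$ with the set of points of $D^*$ lying in fibers of $\pi^*$ over the base $D'$, and to show that this set coincides with the orbit $B_m^\mbb{C}\cdot D$. First I would note that the fiber of $\pi^*$ over the base point $z_0'$ is exactly $B_m^\mbb{C}\cdot\mbb{H}_m=\mbb{H}_m^*$ by Proposition~\ref{Prop:BundleStructure}. More generally, for each point of $D'$ the corresponding $\pi^*$--fiber is isomorphic to $\mbb{H}_m^*$, so $(\pi^*)^{-1}(D')$ is the union of these ball--globalization fibers over the genuine base $D'\subset (D')^*$.

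Next I would use $S$--equivariance of the original submersion together with the fact that $S$ acts simply transitively on $D$. Since $D'=\widehat{\pi}(S)\cdot z_0'$ and $\pi^*\circ(\text{inclusion})=\pi$ on $D$, the fiber over a point $s'\cdot z_0'$ with $s'\in S'$ is carried into the fiber over $z_0'$ by a suitable element of $S^\mbb{C}$, so the total preimage $(\pi^*)^{-1}(D')$ is swept out by the $S$--translates of $\mbb{H}_m^*$. Because $B_m$ is normal in $S$ and $\pi$ is the quotient by $B_m$, the $S$--translates of the fiber $\mbb{H}_m^*=B_m^\mbb{C}\cdot\mbb{H}_m$ over points of $D'$ are precisely the orbits $B_m^\mbb{C}\cdot(s\cdot z_0)$ for $s\in S$, whose union is $B_m^\mbb{C}\cdot(S\cdot z_0)=B_m^\mbb{C}\cdot D$. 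This gives the set--theoretic identity $(\pi^*)^{-1}(D')=B_m^\mbb{C}\cdot D$.

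Finally, the bundle assertion follows by restriction: since $\pi^*\colon D^*\to(D')^*$ is a holomorphic fiber bundle with fiber $\mbb{H}_m^*$ by Proposition~\ref{Prop:BundleStructure}, and since $D'\subset(D')^*$ is open, the restriction of a holomorphic fiber bundle to an open subset of the base is again a holomorphic fiber bundle with the same typical fiber $\mbb{H}_m^*$ and the same structure group. Restricting $\pi^*$ to $(\pi^*)^{-1}(D')=B_m^\mbb{C}\cdot D$ therefore yields the desired holomorphic fiber bundle over $D'$.

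The step I expect to require the most care is verifying the set equality $(\pi^*)^{-1}(D')=B_m^\mbb{C}\cdot D$ rather than merely $\supseteq$; the inclusion $B_m^\mbb{C}\cdot D\subseteq(\pi^*)^{-1}(D')$ is immediate from $\pi^*(D)=\pi(D)=D'$ and $B_m^\mbb{C}$--invariance of the fibers, but the reverse inclusion needs the observation that every $\pi^*$--fiber meeting $(\pi^*)^{-1}(D')$ already has its entire $\mbb{H}_m^*$--fiber inside $B_m^\mbb{C}\cdot D$, which uses the simple transitivity of $S$ on $D$ together with normality of $B_m$ in $S$ to rule out extra points of the globalized fiber escaping the orbit $B_m^\mbb{C}\cdot D$.
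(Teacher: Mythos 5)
Your argument is correct, and both inclusions are handled soundly, but your route for the hard inclusion $(\pi^*)^{-1}(D')\subseteq B_m^\mbb{C}\cdot D$ differs from the paper's in organization. The paper argues pointwise: it writes an arbitrary $\widetilde{z}\in(\pi^*)^{-1}(D')$ as $\widetilde{z}=g\cdot z$ with $g\in S^\mbb{C}$, $z\in D$ (using $D^*=S^\mbb{C}\cdot D$), uses transitivity of $S'$ on $D'$ to produce $g'\in S'$ with $g\cdot\pi(z)=g'\cdot\pi(z)$, and then uses freeness of the $S'$--action to conclude $g(g')^{-1}\in B_m^\mbb{C}$, so that $\widetilde{z}=g(g')^{-1}\cdot(g'\cdot z)\in B_m^\mbb{C}\cdot D$. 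You instead sweep fibers: by $S^\mbb{C}$--equivariance the preimage is $\bigcup_{s\in S}s\cdot(\pi^*)^{-1}(z_0')$, the fiber over $z_0'$ is the single orbit $B_m^\mbb{C}\cdot z_0=B_m^\mbb{C}\cdot\mbb{H}_m$, and normality of $B_m^\mbb{C}$ in $S^\mbb{C}$ collapses the union to $B_m^\mbb{C}\cdot(S\cdot z_0)=B_m^\mbb{C}\cdot D$. These rest on the same underlying isotropy computation ($S^\mbb{C}_{z_0'}=B_m^\mbb{C}\cdot S^\mbb{C}_{z_0}$): the paper re-derives it on the spot from transitivity and freeness of $S'$, whereas you import it from Proposition~\ref{Prop:BundleStructure}. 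One caveat about that citation: the \emph{statement} of Proposition~\ref{Prop:BundleStructure} only identifies the typical fiber up to isomorphism, which by itself yields the open inclusion $B_m^\mbb{C}\cdot\mbb{H}_m\subseteq(\pi^*)^{-1}(z_0')$ but not equality; the equality is exactly what the isomorphism $B_m^\mbb{C}/(B_m^\mbb{C})_{z_0}\to S^\mbb{C}_{z_0'}/S^\mbb{C}_{z_0}$ induced by inclusion, established \emph{inside} the proof of that proposition, provides. So you should point to that step (or note that the $B_m^\mbb{C}$--orbits in the fiber are permuted by $S^\mbb{C}_{z_0'}$, hence all open, hence there is only one, since the fiber is connected). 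Your easy inclusion via $B_m^\mbb{C}$--invariance of $\pi^*$, and the final observation that a holomorphic fiber bundle restricts to a holomorphic fiber bundle over the open subset $D'\subset(D')^*$, coincide with the paper's.
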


\begin{proof}
Let $\widetilde{z}\in(\pi^*)^{-1}(D')$ be given. By definition of $D^*$ there
exist a $g\in S^\mbb{C}$ and a $z\in D$ such that $\widetilde{z}=g\cdot z$
hold. Since $S'$ acts transitively on $D'$, we find a $g'\in S'$ such that
$g\cdot\pi(z)=\pi^*(\widetilde{z})=g'\cdot\pi(z)$ holds. Since $S'$ acts freely
on $D'$, we conclude $g(g')^{-1}\in B_m^\mbb{C}$. This shows that
$\widetilde{z}=g(g')^{-1}\cdot(g'\cdot z)\in B_m^\mbb{C}\cdot D$ holds. The
converse inclusion follows from the fact that $\pi^*$ is
$B_m^\mbb{C}$--invariant.
\end{proof}

\begin{corollary}\label{Prop:PrincBundle}
Let $B'_m$ be a subgroup of $B_m$ having only totally real orbits in
$\mbb{H}_m$. Then the universal globalization of the local
$(B'_m)^\mbb{C}$--action on $D$ is a $(B'_m)^\mbb{C}$--principal bundle over
$D'$.
\end{corollary}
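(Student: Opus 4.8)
The plan is to exhibit the universal globalization explicitly as $P:=(B'_m)^\mbb{C}\cdot D$ sitting inside the universal globalization $D^*=S^\mbb{C}\cdot D=\mbb{C}^n$ of the ambient $S^\mbb{C}$--action, and then to identify the restriction of $\pi^*$ to $P$ with the asserted principal bundle. The structural input I would use is that $B'_m\subset B_m$ and that $B_m$ is the kernel of $\widehat{\pi}\colon S\to S'$, hence lies in the $S$--isotropy of every point of $D'$. Consequently $(B'_m)^\mbb{C}\subset B_m^\mbb{C}$ acts \emph{vertically}: it fixes $D'$ pointwise and moves points only along the fibers of the holomorphic fiber bundle $\pi^*\colon B_m^\mbb{C}\cdot D\to D'$ of Corollary~\ref{Cor:RestrictedBundle}, whose typical fiber is $\mbb{H}_m^*\cong\mbb{C}^m$. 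In particular $P\subset(\pi^*)^{-1}(D')=B_m^\mbb{C}\cdot D$, and $\pi^*$ restricts to a surjection $P\to D'$ whose fiber over $d'\in D'$ is $(B'_m)^\mbb{C}\cdot\pi^{-1}(d')$.

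Next I would study a single fiber. By Proposition~\ref{Prop:GeomRealization} the fiber $\pi^{-1}(d')$ is biholomorphic to $\mbb{B}_m\cong\mbb{H}_m$, and $B'_m$ acts on it with totally real orbits of the maximal dimension $m$ (as do the subgroups constructed in Section~3 to which the corollary is applied). Complexifying such a maximal totally real orbit, exactly as in the proof of Theorem~\ref{Thm:BallQuotients}, shows that every $(B'_m)^\mbb{C}$--orbit through a point of $\mbb{H}_m$ is open in the ambient fiber $\mbb{H}_m^*$. Since $(B'_m)^\mbb{C}\cdot\mbb{H}_m$ is connected and is a union of such open orbits, each of these orbits is also closed in it, so $(B'_m)^\mbb{C}\cdot\mbb{H}_m$ is a single orbit, hence homogeneous with discrete isotropy. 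The simply--connectedness argument from the proof of Proposition~\ref{Prop:TubeRealization} (using that $(B'_m)^\mbb{C}\cong\mbb{C}^m$ is simply--connected) then forces this isotropy to be trivial; equivalently, one recovers the embedding of $\mbb{H}_m$ into $(B'_m)^\mbb{C}$ of Proposition~\ref{Prop:TotallyReal}. Hence every fiber of $\pi^*\colon P\to D'$ is biholomorphic to $(B'_m)^\mbb{C}$, with $(B'_m)^\mbb{C}$ acting freely and transitively.

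Finally I would assemble these fibers into a holomorphic principal bundle. As $\pi\colon D\to D'$ is a holomorphic submersion, over a small open set $U\subset D'$ there is a holomorphic section $\sigma\colon U\to D\subset P$ of $\pi$ (indeed a global holomorphic section exists via the splitting $S\cong S'\ltimes B_m$ of Lemma~\ref{Lem:Semidirect}). The map $\Theta\colon U\times(B'_m)^\mbb{C}\to(\pi^*)^{-1}(U)\cap P$, $\Theta(u,g)=g\cdot\sigma(u)$, is holomorphic, $(B'_m)^\mbb{C}$--equivariant and bijective, and on each fiber it is the orbit map of the free transitive action, hence a biholomorphism; since moreover $\pi^*\circ\Theta=\mathrm{pr}_U$, its differential is everywhere invertible and $\Theta$ is an equivariant local trivialization. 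The transition functions therefore take values in $(B'_m)^\mbb{C}$ acting by left translation, and $\pi^*\colon P\to D'$ is a holomorphic $(B'_m)^\mbb{C}$--principal bundle. That $P$ is the \emph{universal} globalization then follows formally: a globalization exists, so by the remark in~\S 3 of~\cite{HeIan} the universal one exists and maps $(B'_m)^\mbb{C}$--equivariantly onto $P$ by the identity on $D$; this comparison map is compatible with the projections to $D'$ and on each fiber is an equivariant map onto the free homogeneous space $(B'_m)^\mbb{C}$ that is the identity on the open piece $\mbb{H}_m$, hence a fibrewise biholomorphism and therefore a biholomorphism.

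The crux of the argument is the fibrewise step of the second paragraph, namely showing that $(B'_m)^\mbb{C}\cdot\mbb{H}_m$ is a \emph{single and free} $(B'_m)^\mbb{C}$--orbit: this is where one must combine the openness of the complexified orbit, the connectedness of $(B'_m)^\mbb{C}\cdot\mbb{H}_m$, and the triviality of the a priori only discrete isotropy. Once this free transitive fiber model is in hand, the passage to a locally trivial principal bundle via local sections and the identification of $P$ with the universal globalization are routine.
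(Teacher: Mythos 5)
Your argument breaks down at precisely the step you yourself identify as the crux: the claim that the $(B'_m)^\mbb{C}$--orbit of a point of $\pi^{-1}(d')\cong\mbb{H}_m$ inside the fiber $\mbb{H}_m^*$ of $\pi^*$ is \emph{free}. This is false whenever $B'_m$ contains a nontrivial hyperbolic element, which is exactly the situation the corollary must handle in the proof of the main theorem. Take $m=1$, $\mbb{H}_1=\{z\in\mbb{C};\ \im(z)>0\}$, $D'$ a point, and $B'_1=\exp(\lie{a})=\{z\mapsto e^tz;\ t\in\mbb{R}\}$, whose orbits (rays) are totally real. The extension of this action to the fiber $\mbb{H}_1^*=\mbb{C}$ is $\tau\cdot z=e^\tau z$ for $\tau\in(B'_1)^\mbb{C}\cong\mbb{C}$, so the orbit of $i$ is $(B'_1)^\mbb{C}\cdot\mbb{H}_1=\mbb{C}^*$ and the isotropy is $2\pi i\mbb{Z}\not=\{e\}$. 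The error lies in your appeal to the simple connectivity argument: Lemma~2.1 of \cite{IanSpTr}, as used in the proof of Proposition~\ref{Prop:TubeRealization}, shows that the \emph{universal} globalization is simply connected; it says nothing about the particular globalization $(B'_m)^\mbb{C}\cdot\mbb{H}_m\subset\mbb{H}_m^*$ you work with --- and indeed $\mbb{C}^*$ is not simply connected. Likewise, Proposition~\ref{Prop:TotallyReal} provides an \emph{abstract} embedding of $\mbb{H}_m$ into $(B'_m)^\mbb{C}$; it does not assert that the orbit map $(B'_m)^\mbb{C}\to(B'_m)^\mbb{C}\cdot z\subset\mbb{H}_m^*$ is injective. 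In general the universal globalization only \emph{covers} your $P=(B'_m)^\mbb{C}\cdot D$ fiberwise (in the example above via $\exp\colon\mbb{C}\to\mbb{C}^*$), so $P$ is neither the universal globalization nor a principal bundle, and your final ``formal'' comparison argument collapses as well, since it presupposes the free fiber model it is meant to establish.

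This is why the paper never attempts to realize the universal globalization inside $D^*\subset\mbb{C}^n$. Its proof first applies Grauert's Oka principle to the bundle $B_m^\mbb{C}\cdot D\to D'$ of Corollary~\ref{Cor:RestrictedBundle}: since $D'$ is a contractible Stein domain, this bundle is $B_m^\mbb{C}$--equivariantly trivial, $B_m^\mbb{C}\cdot D\cong D'\times\mbb{H}_m^*$. Only after this untwisting does it invoke Proposition~\ref{Prop:TotallyReal}, fiberwise and on the level of abstract universal globalizations, to conclude that the universal globalization of the local $(B'_m)^\mbb{C}$--action on $D$ is $D'\times(B'_m)^\mbb{C}$ --- a trivial principal bundle which maps onto, but in general does not equal, $(B'_m)^\mbb{C}\cdot D$. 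Your preliminary observations are all correct: $(B'_m)^\mbb{C}$ does act vertically, each fiber of $P\to D'$ is a single open orbit with discrete isotropy, and the section coming from Lemma~\ref{Lem:Semidirect} is holomorphic. What cannot be repaired within your framework is the identification of that fiber with $(B'_m)^\mbb{C}$ itself rather than with a quotient $(B'_m)^\mbb{C}/\Lambda$ by a discrete subgroup; removing this ambiguity is exactly what the paper's detour through the universal globalization and Grauert's theorem accomplishes.
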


\begin{proof}
Since $D'$ is a contractible Stein domain, we may apply Grauert's Oka principle
to the bundle $B_m^\mbb{C}\cdot D\to D'$ in order to obtain a
$B_m^\mbb{C}$--equivariant biholomorphism $B_m^\mbb{C}\cdot D\to D'\times
\mbb{H}_m^*$. Then Proposition~\ref{Prop:TotallyReal} implies that the
universal globalization of the local $(B'_m)^\mbb{C}$--action on $D$ is
isomorphic to $D'\times(B'_m)^\mbb{C}$ which proves the claim.
\end{proof}

\subsection{Proof of the main theorem}

Finally we are in position to prove that our main result.

\begin{theorem}
Let $D\subset\mbb{C}^n$ be a bounded homogeneous domain. Let $\varphi$ be an
automorphism of $D$ such that the group $\Gamma=\langle\varphi\rangle$ is
a discrete subgroup of $\Aut_\mathcal{O}(D)$. Then the quotient $X=D/\Gamma$ is
a Stein space.
\end{theorem}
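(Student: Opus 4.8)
The plan is to reduce the general case to the two special situations the paper has already prepared, and then patch them together. By Proposition~\ref{Prop:Reduction} I may assume that $\varphi$ has trivial elliptic part, so that $\Gamma=\langle\varphi\rangle$ lies in the maximal split solvable subgroup $S$ acting simply transitively on $D$. I then invoke the $S$--equivariant holomorphic submersion $\pi\colon D\to D'$ of Proposition~\ref{Prop:GeomRealization}, whose fibers are biholomorphic to $\mbb{B}_m$ and which corresponds on the group level to the splitting $S\cong S'\ltimes B_m$ of Lemma~\ref{Lem:Semidirect}. The whole argument now splits according to how $\Gamma$ interacts with the fibration: either $\Gamma$ acts properly on the base $D'$, or $\Gamma$ is contained in $B_m$ and hence stabilizes every $\pi$--fiber. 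These are exactly the two branches announced in the introduction.

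In the first branch, where the image $\widehat{\pi}(\Gamma)$ is a discrete subgroup acting properly and freely on the lower-dimensional domain $D'$, I would argue by induction on $\dim D$. The induction hypothesis gives that $D'/\Gamma$ is Stein. To descend Steinness to $X=D/\Gamma$, I pass to the globalized fibration $\pi^*\colon D^*\to(D')^*$ from Proposition~\ref{Prop:BundleStructure}, which is a holomorphic fiber bundle with connected complex structure group over the Stein contractible base $(D')^*$; Corollary~\ref{Cor:RestrictedBundle} identifies $(\pi^*)^{-1}(D')=B_m^\mbb{C}\cdot D$ as a bundle over $D'$. Applying Grauert's Oka principle as in Corollary~\ref{Prop:PrincBundle}, this bundle trivializes, and Proposition~\ref{Prop:EquivariantFiberBundle} lets me take the quotient by $\Gamma$ fiberwise to realize $X$ as a fiber bundle over the Stein space $D'/\Gamma$ with fiber $\mbb{B}_m$ (or an appropriate Stein fiber). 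Since a holomorphic fiber bundle with Stein base and Stein fiber whose structure group acts suitably is again Stein, I conclude that $X$ is Stein.

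In the second branch, where $\Gamma\subset B_m$ stabilizes each fiber, the quotient $X=D/\Gamma$ fibers over $D'$ with fibers $\mbb{B}_m/\Gamma$, and these fiber quotients are Stein by Theorem~\ref{Thm:BallQuotients}. Again using the trivialization $B_m^\mbb{C}\cdot D\cong D'\times\mbb{H}_m^*$ from Corollary~\ref{Prop:PrincBundle} and the embedding of $\mbb{B}_m$ as a $B_m'$--invariant domain in $(B_m')^\mbb{C}$, I would realize $X$ as a domain in a holomorphically trivial bundle $D'\times\bigl((B_m')^\mbb{C}/\Gamma\bigr)$ over the Stein base $D'$ whose total space is Stein, and then invoke Proposition~\ref{Prop:Embedding} (Stein is inherited by the domain once a covering is Stein) to finish.

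The main obstacle I anticipate is the dichotomy itself and the passage between the two branches. One must verify that $\widehat{\pi}(\Gamma)$ is genuinely discrete and acts properly on $D'$ whenever it is nontrivial, so that the inductive hypothesis applies; the alternative, $\widehat{\pi}(\Gamma)=\{e\}$, forces $\Gamma\subset B_m$ and lands us in the ball case. Discreteness of the image is delicate because quotients of discrete groups need not be discrete in general, and it is precisely here that one exploits that $S'$ and $B_m$ sit in a split solvable group, where every cyclic subgroup generated by a single element is automatically discrete by part (4) of the theorem on split solvable groups. The secondary technical point is ensuring that the Oka-principle trivialization and the fiberwise quotient are compatible, i.e.\ that the $\Gamma$--action descends to bundle automorphisms so that Proposition~\ref{Prop:EquivariantFiberBundle} can be applied; this is where the normality of $B_m$ in $S$ and the equivariance of $\pi$ do the essential work.
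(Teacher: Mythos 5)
Your overall strategy is the one the paper itself follows: reduce to $\Gamma\subset S$ via Proposition~\ref{Prop:Reduction}, use the $S$--equivariant submersion $\pi\colon D\to D'$, split into the two cases ($\Gamma\subset B_m$ versus $\Gamma$ acting properly on $D'$), handle the fiber case through the globalization and Proposition~\ref{Prop:Embedding}, and the base case by induction. Your second branch and your discussion of the dichotomy (discreteness of $\widehat{\pi}(\Gamma)$ via part~(4) of the theorem on split solvable groups, normality of $B_m$ providing the bundle automorphisms) are exactly right. However, your first branch has a genuine flaw at its concluding step: you claim to ``realize $X$ as a fiber bundle over $D'/\Gamma$ with fiber $\mathbb{B}_m$'' and then apply a Stein-base--Stein-fiber theorem to $X$ itself. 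The map $X\to D'/\Gamma$ induced by $\pi$ is only a \emph{smooth} fiber bundle, not a holomorphic one: as the remark following Proposition~\ref{Prop:GeomRealization} points out, if $\pi$ admitted local holomorphic trivializations it would be holomorphically trivial by Royden's theorem, forcing $D\cong D'\times\mathbb{B}_m$, which fails in general. So no fibration theorem (Matsushima--Morimoto requires a holomorphic bundle with connected structure group) applies to $X\to D'/\Gamma$ directly, and as written this step fails.

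The repair is precisely what the paper does, and you already hold all the ingredients. Apply Proposition~\ref{Prop:EquivariantFiberBundle} to the genuinely holomorphic bundle $B_m^{\mathbb{C}}\cdot D\to D'$ of Corollary~\ref{Cor:RestrictedBundle} (here $\Gamma$ acts by bundle automorphisms because it normalizes $B_m^{\mathbb{C}}$), obtaining a holomorphic fiber bundle $\bigl(B_m^{\mathbb{C}}\cdot D\bigr)/\Gamma\to D'/\Gamma$ whose fiber is $\mathbb{H}_m^*\cong\mathbb{C}^m$ --- not $\mathbb{B}_m$ --- and whose structure group is connected; the induction hypothesis makes the base Stein, and Matsushima--Morimoto then makes the total space Stein. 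Since $X=D/\Gamma$ is only a \emph{domain} inside this Stein manifold, not the whole of it, you must then finish exactly as you do in your second branch: $X$ admits the Stein covering $D$, so Proposition~\ref{Prop:Embedding} yields that $X$ is Stein. Two smaller points: no Oka-principle trivialization of $B_m^{\mathbb{C}}\cdot D\to D'$ is needed in this branch (that enters only in the case $\Gamma\subset B_m$), and the vague condition that the ``structure group acts suitably'' should be replaced by the precise hypothesis of Matsushima--Morimoto, namely connectedness of the structure group, which Proposition~\ref{Prop:EquivariantFiberBundle} preserves under the quotient.
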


\begin{proof}
Due to our reduction steps in Section~\ref{Sec:Reduction} we may assume that
$\varphi$ is contained in a maximal split solvable subgroup $S\subset G$.
Moreover, we assume that $D$ is realized as a Siegel domain in $\mbb{C}^n$ such
that $S$ acts by affine-linear transformations on $D$.

We prove this theorem by induction on $n=\dim_\mbb{C}D$. If $n=1$, then $D$ is
biholomorphically equivalent to the unit disc in $\mbb{C}$ and the claim
follows.

Let $n>1$ and let us assume that the claim is proven for every $n'<n$. Let
$\pi\colon D\to D'$ be the $S$--equivariant holomorphic submersion with fibers
isomorphic to $\mbb{H}_m$ onto the homogeneous Siegel domain
$D'\subset\mbb{C}^{n-m}$. In the first step we consider the case that $\Gamma$
is contained in the normal subgroup $B_m$, hence that $\pi$ is
$\Gamma$--invariant. By Corollary~\ref{Cor:PerfectGroup} there exists an
$m$--dimensional
subgroup $B_m'$ of $B_m$ which contains $\Gamma$ such that every $B_m'$--orbit
in $\mbb{H}_m$ is totally real. Applying Corollary~\ref{Prop:PrincBundle} we
see that the universal globalization $D^*$ of the local $(B'_m)^\mbb{C}$--action
on $D$ is a $(B_m')^\mbb{C}$--principal bundle over $D'$ which must be
holomorphically trivial. Therefore $D^*/\Gamma$ is a Stein manifold and
Proposition~\ref{Prop:Embedding} applies to show that $X$ is Stein.

If $\Gamma$ is not contained in $B_m$, then we obtain a proper $\Gamma$--action
on $D'$. Since $\Gamma$ normalizes the group $B_m^\mbb{C}$, it follows that
$\Gamma$ acts by bundle automorphisms on the holomorphic fiber bundle
$B_m^\mbb{C}\cdot D\to D'$. We conclude from
Proposition~\ref{Prop:EquivariantFiberBundle} that the quotient bundle
\begin{equation*}
\left(B_m^\mbb{C}\cdot D\right)/\Gamma\to D'/\Gamma
\end{equation*}
is a holomorphic fiber bundle with fiber $\mbb{H}_m^*$ and connected structure
group $S^\mbb{C}_{z_0'}/(S^\mbb{C}_{z_0})_0$. Since the base is Stein by our
induction hypothesis, a result of Matsushima and Morimoto (Theorem~6
in~\cite{MatMo}) implies that $\left(B_m^\mbb{C}\cdot D\right)/\Gamma$ is Stein,
hence that $X$ is Stein. This finishes the proof.
\end{proof}

\end{document}